\documentclass[11pt]{article}

\usepackage[utf8]{inputenc}
\usepackage{authblk}
\usepackage[margin=1in]{geometry}

\usepackage{natbib}

\usepackage{mathrsfs,amsthm,xcolor,verbatim,bbm,amsmath,amsfonts,
amssymb,nicefrac,enumitem}
\usepackage{hyperref,bm,mathtools,xparse,etoolbox}
\hypersetup{hidelinks}
\usepackage[capitalise,sort]{cleveref} 

\usepackage{textcomp}
\usepackage{sidecap}
\usepackage{graphicx}
\usepackage{subfigure}
\usepackage{booktabs}
\usepackage{makecell}
\usepackage{multirow}

\crefname{enumi}{item}{items}

\crefname{figure}{Figure}{Figures}
\crefname{equation}{}{}
\crefname{subsection}{Subsection}{Subsections}

\theoremstyle{plain}
\newtheorem{theorem}{Theorem}[section]
\newtheorem{lemma}[theorem]{Lemma}
\newtheorem{proposition}[theorem]{Proposition}

\theoremstyle{definition}

\crefname{case}{Case}{Cases}
\crefname{cor}{Corollary}{Corollaries}
\usepackage{algorithm}
\usepackage{algpseudocode}

\usepackage{tikz}
\usetikzlibrary{arrows.meta,calc,shadows.blur}
\usetikzlibrary{matrix,chains,positioning,decorations.pathreplacing,arrows}
\usetikzlibrary{shapes,arrows}
\tikzset{
	font={\fontsize{9pt}{12}\selectfont}}

\pgfmathsetseed{7}

\newcommand{\drawblob}[7]{%
  \begin{scope}[rotate around={#5:(#1,#2)}]
    \fill[#6!12, opacity=0.55]
      (#1,#2) ellipse [x radius=1.05*#3, y radius=1.05*#4];
    \foreach \i in {1,...,#7}{
      \pgfmathsetmacro{\gx}{(rnd+rnd+rnd+rnd-2)/1.6}
      \pgfmathsetmacro{\gy}{(rnd+rnd+rnd+rnd-2)/1.6}
      \pgfmathsetmacro{\px}{#1 + \gx*#3}
      \pgfmathsetmacro{\py}{#2 + \gy*#4}
      \node[
        circle,
        fill=#6!70!black,
        draw=#6!85!black,
        very thin,
        inner sep=0pt,
        minimum size=3pt
      ] at (\px,\py) {};
    }
  \end{scope}
}

\ExplSyntaxOn

\bool_new:N \g_noteobserve

\NewDocumentCommand{\setnote}{}{
  \bool_gset_true:N \g_noteobserve
}

\NewDocumentCommand{\setobserve}{}{
  \bool_gset_false:N \g_noteobserve
}

\NewDocumentCommand{\nobs}{ o }{
  \IfValueT{#1}{
    \str_if_eq:noTF {note} {#1} {
      \bool_gset_true:N \g_noteobserve
    } {
      \str_if_eq:noTF {Note} {#1} {
        \bool_gset_true:N \g_noteobserve
      } {
        \bool_gset_false:N \g_noteobserve
      }
    }
  }
  \bool_if:nTF { \g_noteobserve } {
    \bool_gset_false:N \g_noteobserve
    note
  } {
    \bool_gset_true:N \g_noteobserve
    observe
  }
  \IfValueF{#1}{~}
}

\NewDocumentCommand{\Nobs}{ o }{
  \IfValueT{#1}{
    \str_if_eq:noTF {note} {#1} {
      \bool_gset_true:N \g_noteobserve
    } {
      \str_if_eq:noTF {Note} {#1} {
        \bool_gset_true:N \g_noteobserve
      } {
        \bool_gset_false:N \g_noteobserve
      }
    }
  }
  \bool_if:nTF { \g_noteobserve } {
    \bool_gset_false:N \g_noteobserve
    Note
  } {
    \bool_gset_true:N \g_noteobserve
    Observe
  }
  \IfValueF{#1}{~}
}

\int_new:N \g_furthermore

\NewDocumentCommand{\Moreover}{ o o }{
  \IfValueT{#1}{
    \str_case:nn {#1} {
      {Furthermore} {\int_set:Nn {\g_furthermore} {0}}
      {Moreover} {\int_set:Nn {\g_furthermore} {1}}
      {In~addition} {\int_set:Nn {\g_furthermore} {2}}
      {note} {\bool_gset_true:N \g_noteobserve}
      {observe} {\bool_gset_false:N \g_noteobserve}
    }
    \IfValueT{#2}{
      \str_case:nn {#2} {
        {Furthermore} {\int_set:Nn {\g_furthermore} {0}}
        {Moreover} {\int_set:Nn {\g_furthermore} {1}}
        {In~addition} {\int_set:Nn {\g_furthermore} {2}}
        {note} {\bool_gset_true:N \g_noteobserve}
        {observe} {\bool_gset_false:N \g_noteobserve}
      }
    }
  }
  \int_case:nn { \int_mod:nn {\g_furthermore} {3} } {
    { 0 } { Furthermore,~\nobs that}
    { 1 } { Moreover,~\nobs that}
    { 2 } { In~addition,~\nobs that}
  }
  \int_incr:N \g_furthermore
  \IfValueF{#1}{~}
}

\bool_new:N \g_hencetherefore

\NewDocumentCommand{\hence}{}{
  \bool_if:nTF { \g_hencetherefore } {
    \bool_gset_false:N \g_hencetherefore
    hence~
  } {
    \bool_gset_true:N \g_hencetherefore
    therefore~
  }
}

\NewDocumentCommand{\Hence}{}{
  \bool_if:nTF { \g_hencetherefore } {
    \bool_gset_false:N \g_hencetherefore
    Hence,~we~obtain~
  } {
    \bool_gset_true:N \g_hencetherefore
    Therefore,~we~obtain~
  }
}

\seq_new:N \g_cflist_loaded
\seq_new:N \g_cflist_pending

\NewDocumentCommand{\cfadd}{ m }
{
	\seq_if_in:NnF \g_cflist_loaded { #1 } {
		\seq_if_in:NnF \g_cflist_pending { #1 } {
			\seq_gput_right:Nn \g_cflist_pending { #1 }
		}
	}
}

\NewDocumentCommand{\cfconsiderloaded}{ m }{
	\seq_gput_right:Nn \g_cflist_loaded {#1}
}

\NewDocumentCommand{\cfremove}{ m }
{
	\seq_gremove_all:Nn \g_cflist_pending { #1 }
}

\NewDocumentCommand{\cfload}{ o }
{
	\seq_if_empty:NTF \g_cflist_pending {\unskip} {
		(cf.\ \cref{\seq_use:Nn \g_cflist_pending {,}})\IfValueTF{#1}{#1~}{\unskip}
		\seq_gconcat:NNN \g_cflist_loaded \g_cflist_loaded \g_cflist_pending
		\seq_gclear:N \g_cflist_pending
	}
}

\NewDocumentCommand{\cfclear} {} {
	\seq_gclear:N \g_cflist_loaded
	\seq_gclear:N \g_cflist_pending
}

\NewDocumentCommand{\cfout}{ o }
{
	\seq_if_empty:NTF \g_cflist_pending {\unskip} {
		(cf.\ \cref{\seq_use:Nn \g_cflist_pending {,}})\IfValueTF{#1}{#1~}{\unskip}
		\seq_gclear:N \g_cflist_pending
	}
}

\NewDocumentCommand{\ifnocf} { m } {
	\seq_if_empty:NT \g_cflist_pending { #1 }
}

\ExplSyntaxOff

\NewDocumentEnvironment{cproof}{m}
{\begin{proof}[Proof of \cref{#1}]}%
	{\noindent The proof of \cref{#1} is thus complete.
\end{proof}}

\NewDocumentEnvironment{cproof2}{m}
{\begin{proof}[Proof of \cref{#1}]}%
	{\noindent This completes the proof of \cref{#1}.
\end{proof}}

\title{CyclOT: Learning Quadratic Optimal Transport Maps via Synchronized Forward--Backward Interpolants}

\author[1]{
Shizhou Xu
	\thanks{Corresponding author: \texttt{shzxu@stanford.edu}.
This work was performed while the author was affiliated with
the Department of Mathematics, University of California, Davis.}
}
\author[2]{
Jiachen Liu
}

\author[3]{
Shih-Hsin Wang
	\thanks{
		{This work was performed while the author was affiliated with
the Department of Mathematics, University of Utah.}
	}
}

\author[4]{
Stefan Broecker
}

\author[5]{
Yuhao Huang
}

\author[5]{
Bao Wang
}

\author[2]{
Thomas Strohmer
}

\affil[1]{SLAC National Accelerator Laboratory, Stanford University\\
Menlo Park, CA 94025, USA}
\affil[2]{Department of Mathematics\\
University of California, Davis\\
Davis, CA 95616, USA}
\affil[3]{Department of Computer Science and Information Engineering\\
National Taiwan University\\
Taipei 10617, Taiwan}
\affil[4]{Department of Computer Science\\
University of California, Davis\\
Davis, CA 95616, USA}
\affil[5]{Department of Mathematics and
Scientific Computing and Imaging Institute\\
University of Utah\\
Salt Lake City, UT 84112, USA}

\date{}

\begin{document}
\maketitle

\begin{abstract}
We study the recovery of forward and reverse quadratic optimal-transport
maps from unpaired samples in high dimensions. We introduce a bidirectional
neural framework in which the learned maps induce forward and backward
displacement interpolants, while the training objective combines
bidirectional quadratic action, discriminator-restricted Jensen-Shannon
endpoint objectives, and two-sided cycle consistency. The construction
requires neither precomputed sample pairings nor an explicit
convex-potential parameterization. For absolutely continuous probability
measures supported on a compact convex set, and under the stated
generator-approximation, discriminator-richness, and minimizer-attainment
conditions, we prove a population recovery theorem: for every prescribed
accuracy, the sum of the corresponding \(L^2\) errors between any global
minimizer and the forward and reverse quadratic Brenier maps is below that
accuracy, provided the discriminator level is sufficiently large and the annealing
action weight becomes sufficiently small. Moreover, the cycle loss is bounded above by \(\lambda W_2^2(\mu_0,\mu_1)\). Complementary results quantify approximate invertibility and show that exact endpoint Jensen-Shannon divergence and cycle consistency control missing target mass and many-to-one map collapse, respectively. Experiments on Swiss roll, MNIST, CelebA, single-cell perturbation data, and chest X-ray images evaluate endpoint fidelity, transport cost, inverse consistency, and the geometry of the induced interpolations.
\end{abstract}


\noindent\textbf{Keywords:}
optimal transport, Brenier maps, displacement interpolation,
transport annealing, cycle consistency.

\newpage

\tableofcontents

\newpage

\section{Introduction}
\label{sec:intro}

Learning meaningful transformations between unpaired high-dimensional distributions is a fundamental challenge in modern machine learning. Such problems arise in image-to-image translation~\citep{zhu2017unpaired}, domain adaptation~\citep{courty2017optimal}, single-cell perturbation modeling~\citep{bunne2023learning}, and learning straight flow paths for flow-based generative models \cite{liu2022rectified}, where ground-truth correspondences are inherently unobserved and matching endpoint marginals alone is insufficient. The goal is instead to recover a structurally meaningful map that reflects the underlying geometry or dynamics. Optimal transport (OT) provides a principled framework for this task by selecting, among all maps matching the source and target distributions, one that minimizes the expected transport cost~\citep{villani2009optimal,santambrogio2015optimal,peyre2019computational}: under quadratic cost, the Brenier map is the gradient of a convex potential~\citep{brenier1991polar}, and the induced McCann displacement interpolation defines a Wasserstein geodesic between the two distributions~\citep{mccann1997convexity}. Equivalently, the Benamou--Brenier dynamic formulation characterizes this path as the minimum-kinetic-energy solution of a continuity equation~\citep{benamou2000computational}.

Despite this appealing characterization, learning OT maps remains difficult in high dimensions. Classical computational OT methods, including entropic Sinkhorn iterations~\citep{cuturi2013sinkhorn,peyre2019computational}, are primarily discrete, can be expensive at large sample sizes, and do not directly provide parametric maps for out-of-sample generalization. Recent flow-matching methods improve scalability by using prescribed couplings between source and target samples~\citep{liu2022rectified}. In particular, minibatch OT couplings are used to straighten flow trajectories and improve few-step generation~\citep{pooladian2023multisample,tong2023improving}. However, these couplings are batch-dependent approximations of the population OT plan and may introduce both computational overhead and statistical bias~\citep{fatras2019learning}.

Neural OT methods instead seek to learn transport maps or potentials directly from samples. Dual and minimax formulations, including Wasserstein adversarial objectives and neural OT methods~\citep{pmlr-v70-arjovsky17a,korotin2023neural}, are principled but can be sensitive to optimization and provide limited direct supervision of the intermediate transport path. Brenier-inspired approaches ~\citep{taghvaei20192, makkuva2020optimal,korotin2019wasserstein} represent the quadratic OT map as the gradient of a convex potential, often parameterized by input convex neural networks (ICNNs)~\citep{brenier1991polar,pmlr-v70-amos17b}. While this preserves the convex-potential structure required by Brenier's theorem, ICNN-based parameterizations can be restrictive and difficult to optimize compared with standard neural architectures. Optimal flow matching~\citep{kornilov2024ofm} is closely related in its goal of learning straight OT trajectories, but still belongs to the broader class of methods whose practical performance depends on surrogate coupling or potential-learning mechanisms. As a result, existing methods often trade scalability, architectural flexibility, optimization stability, and fidelity for the population OT solution.

Very recent work further highlights the importance of coupling design in flow-based generative modeling. Semi-discrete OT coupling methods replace minibatch OT with dataset-level semi-discrete formulations to improve scalability and trajectory straightness~\citep{mousavi2026semidiscrete,kong2026alignflow}, while hierarchical or dependent-coupling approaches modify the coupling structure to improve few-step or multistage generation~\citep{zhang2026hierarchical,ma2026stochastic}. These methods reinforce the importance of coupling structure, but they still construct couplings or interpolants through external pairing mechanisms. In contrast, our method, CyclOT, couples the induced intermediate interpolants through jointly learned forward and backward maps, with cycle consistency promoting approximate invertibility and coherence between the two transport directions, without requiring precomputed pairings or minibatch/semi-discrete OT subproblems during training.

\begin{figure}[!ht]
\centering
\resizebox{0.9\textwidth}{!}{%
\begin{tikzpicture}[
    x=0.84cm,
    y=0.84cm,
    >=Latex,
    every node/.style={font=\sffamily},
    boxlabel/.style={font=\sffamily\bfseries},
    infobox/.style={
        rounded corners=5pt,
        inner sep=5pt,
        align=center,
        draw=black!25,
        thin,
        fill=white,
        fill opacity=0.95,
        text opacity=1
    }
]

\draw[
    rounded corners=9pt,
    draw=blue!55,
    line width=0.8pt,
    fill=blue!2
]
(-2.7,-1.5) rectangle (1.1,1.5);

\node[blue,font=\small] at (-0.8,1.9)
{Source distribution $\mu_0$};

\drawblob{-0.8}{0}{1.35}{0.68}{35}{blue}{28}


\draw[
    rounded corners=9pt,
    draw=red!55,
    line width=0.8pt,
    fill=red!2
]
(6.2,-1.5) rectangle (10,1.5);

\node[red,font=\small] at (8.2,1.9)
{Target distribution $\mu_1$};

\drawblob{7.6}{-0.58}{0.78}{0.78}{0}{red}{18}
\drawblob{8.65}{0.5}{0.78}{0.78}{0}{red}{18}


\draw[
    blue,
    very thick,
    -{Latex[length=2.8mm,width=3mm]}
]
(1.5,0.42) -- (5.9,0.42);

\node[blue,font=\small] at (3.8,0.9)
{Forward map $f_\theta$};

\draw[
    red,
    very thick,
    -{Latex[length=2.8mm,width=2.3mm]}
]
(5.9,-0.48) -- (1.5,-0.48);

\node[red,font=\small] at (3.8,-1.1)
{Backward map $g_\phi$};


\node[
    infobox,
    draw=violet!45,
    font=\small,
    minimum width=3.15cm
] at (-0.9,-3.4)
{
\textcolor{violet!70!black}{Marginal matching}\\[2pt]
\begin{tikzpicture}[baseline, x=1cm, y=1cm]
  \draw[gray!60, line width=0.4pt] (0.2,0) -- (2.4,0);
  \draw[draw=blue!55!black, fill=blue!55, fill opacity=0.25, line width=0.9pt]
    (0.3,0) .. controls (0.6,0) and (0.5,0.85) .. (1.0,0.85)
             .. controls (1.5,0.85) and (1.4,0) .. (1.7,0) -- cycle;
  \draw[draw=orange!70!red, fill=red!55, fill opacity=0.25, line width=0.9pt]
    (0.6,0) .. controls (0.9,0) and (1.0,0.95) .. (1.4,0.95)
             .. controls (1.8,0.95) and (1.9,0) .. (2.2,0) -- cycle;
\end{tikzpicture}\\[2pt]
$\mathcal{B}_{\mathrm{JSD}}(f_\theta,g_\phi)$
};


\node[
    infobox,
    draw=blue!45,
    font=\small,
    minimum width=2.45cm
] at (3.7,-3.4)
{
\textcolor{blue!70!black}{Transport cost}\\[2pt]
\begin{tikzpicture}[baseline, x=1cm, y=1cm]
  \draw[dashed, black!70, line width=0.5pt] (0.35,0.40) circle (0.32);
  \foreach \p in {(0.22,0.30),(0.30,0.50),(0.40,0.42),(0.48,0.30),
                  (0.25,0.45),(0.38,0.55),(0.45,0.48)}
    \fill[blue] \p circle (0.035);
 
  \draw[dashed, black!70, line width=0.5pt] (2.05,0.40) circle (0.32);
  \foreach \p in {(1.92,0.30),(2.00,0.50),(2.10,0.42),(2.18,0.30),
                  (1.95,0.45),(2.08,0.55),(2.15,0.48)}
    \fill[red!70!black] \p circle (0.035);
 
  \draw[blue!55!black, line width=0.9pt, -{Stealth[length=2.2mm]}]
    (0.70,0.38) .. controls (1.05,0.62) and (1.35,0.62) .. (1.73,0.42);
  \fill[blue!55!black] (0.86,0.48) circle (0.035);
  \fill[blue!55!black] (1.15,0.57) circle (0.035);
  \fill[blue!55!black] (1.50,0.53) circle (0.035);
\end{tikzpicture}\\[2pt]
$\mathcal{A}(f_\theta,g_\phi)$
};

\node[
    infobox,
    draw=green!45!black,
    font=\small,
    minimum width=3.35cm
] at (8.2,-3.4)
{
\textcolor{green!45!black}{Cycle consistency}\\[2pt]
\begin{tikzpicture}[baseline, x=1cm, y=1cm]
  \path[use as bounding box] (0.03,-0.30) rectangle (2.37,0.90);
 
  \draw[dashed, black!70, line width=0.5pt] (0.35,0.40) circle (0.32);
  \foreach \p in {(0.22,0.30),(0.30,0.50),(0.40,0.42),(0.48,0.30),
                  (0.25,0.45),(0.38,0.55),(0.45,0.48)}
    \fill[blue] \p circle (0.035);
 
  \draw[dashed, black!70, line width=0.5pt] (2.05,0.40) circle (0.32);
  \foreach \p in {(1.92,0.30),(2.00,0.50),(2.10,0.42),(2.18,0.30),
                  (1.95,0.45),(2.08,0.55),(2.15,0.48)}
    \fill[red!70!black] \p circle (0.035);
 
  \draw[blue!55!black, line width=0.8pt, -{Stealth[length=1.8mm]}]
    (0.66,0.52) .. controls (1.05,0.80) and (1.35,0.80) .. (1.76,0.54);
 
  \draw[orange!70!red, line width=0.8pt, -{Stealth[length=1.8mm]}]
    (1.76,0.26) .. controls (1.35,0.02) and (1.05,0.02) .. (0.66,0.26);
 
  \draw[green!45!black, dashed, line width=0.5pt] (0.35,0.40) circle (0.42);
  \node[green!35!black, font=\tiny] at (0.4,-0.18) {$\approx x$};
\end{tikzpicture}\\[2pt]
$\mathcal{C}(f_\theta,g_\phi)$
};

\end{tikzpicture}
}
\caption{Overview of our proposed \textbf{target-free OT learning via synchronized forward--backward interpolants.}
The forward map $f_\theta$ transports $\mu_0$ from left to right, while the backward map $g_\phi$ transports $\mu_1$ from right to left; Both $f_\theta$ and $g_\phi$ are neural network-approximated maps. The forward and backward induced marginals are matched at the two endpoints, providing marginal supervision for the transport maps. 
}
\label{fig:method_overview_minimal}
\end{figure}

\subsection{Our contributions}

We propose a new framework, \textbf{CyclOT}, for learning the quadratic OT map directly from unpaired samples, without iterative Reflow-style retraining~\citep{liu2022rectified}, minibatch or semi-discrete OT coupling solvers~\citep{pooladian2023multisample,tong2023improving,mousavi2026semidiscrete,kong2026alignflow}, or restrictive convex-potential parameterizations~\citep{taghvaei20192, makkuva2020optimal,korotin2019wasserstein, pmlr-v70-amos17b}. Our method learns two maps simultaneously: a forward transport map from source to target and a backward map from target to source. These maps induce forward and backward interpolants evolving from opposite endpoints, while cycle consistency couples the two transport directions and promotes coherent intermediate trajectories. The resulting objective can be approximated by two neural networks and optimized using stochastic optimization. At the theoretical level, our objective is motivated by an $L^2$ projection view of Wasserstein barycenters under independence constraints~\citep{xu2023fair}. This perspective suggests that enforcing independence-like intermediate agreement between map-induced interpolants can identify the quadratic OT structure without explicitly solving for a coupling.

Our main contributions are:
\begin{enumerate}[leftmargin=*]

    \item \textbf{A target-free learning framework for OT maps.} 
    We introduce synchronized forward--backward interpolants that impose structural consistency along the transport path without requiring paired samples, precomputed couplings, or externally specified trajectories. See Section~\ref{sec:proposed_framework} for details.

    \item \textbf{Scalable training with standard neural architectures.}  
    Our method, CyclOT, avoids iterative Reflow-style retraining~\citep{liu2022rectified}, minibatch OT approximations~\citep{pooladian2023multisample,tong2023improving}, and ICNNs~\citep{pmlr-v70-amos17b}, enabling efficient optimization with flexible expressive architectures such as ResNets or Transformers. See Section~\ref{subsec:population_objective} for details.

    \item \textbf{Provable recovery of the quadratic OT map.}  
    Under suitable assumptions, we show that minimizing our objective recovers the population quadratic OT map and its inverse. See Section~\ref{subsec:ot_recovery} for details.

    \item \textbf{Empirical validation across generative and scientific benchmarks.}  
    Across synthetic, image, and scientific datasets, our approach yields stable training, accurate one-step generation, and meaningful interpolations competitive with representative OT-based and flow-based generative methods. See Section~\ref{sec:experiments} for details.

\end{enumerate}

\subsection{Related work}
\label{sec:related}

\paragraph{Neural Optimal Transport and Direct Map Learning.}
Classical OT provides a rigorous variational formulation for matching probability measures~\citep{villani2009optimal,santambrogio2015optimal,peyre2019computational}, and the quadratic-cost case admits the Brenier-map structure~\citep{brenier1991polar}. However, computing OT maps from samples in high dimension remains challenging. Entropic solvers such as Sinkhorn iterations~\citep{cuturi2013sinkhorn} are effective for discrete OT but do not directly yield parametric out-of-sample maps. Neural OT methods address this by parameterizing transport maps, costs, or Kantorovich potentials with neural networks~\citep{korotin2023neural}. A prominent Brenier-inspired line represents maps as gradients of convex potentials, often implemented using ICNNs~\citep{taghvaei20192, makkuva2020optimal,korotin2019wasserstein, pmlr-v70-amos17b}. While theoretically appealing, such convex-potential parameterizations can restrict architectural flexibility and complicate optimization. CyclOT instead learns forward and backward maps using standard neural network architectures and enforces OT structure through endpoint marginal matching, transport-cost minimization, and cycle consistency.

\paragraph{Flow Matching and Coupling Design.}
Flow matching trains continuous normalizing flows by regressing velocity fields along prescribed probability paths~\citep{lipman2022flow}. The geometry of these paths depends critically on the coupling between source and target samples. Rectified flow and Reflow aim to learn straighter trajectories, but often require repeated simulation or retraining stages~\citep{liu2022rectified}. Minibatch OT and multisample couplings improve trajectory straightness by matching source and target samples within each batch~\citep{pooladian2023multisample,tong2023improving}, but introduce batch-dependent approximations and additional OT computation. Recent semi-discrete and structured coupling methods further improve scalability or few-step generation by replacing batch OT with semi-discrete or hierarchical coupling mechanisms~\citep{mousavi2026semidiscrete,kong2026alignflow,zhang2026hierarchical,ma2026stochastic}. In contrast, CyclOT does not require precomputed couplings, minibatch OT, or semi-discrete pairing.

\paragraph{Single-Step Generative Models.}
Accelerating generative sampling to one or a few steps is a major goal in diffusion and flow-based modeling. Consistency models and consistency trajectory models learn self-consistent maps along probability-flow trajectories~\citep{song2023consistency,kim2023consistency}, while progressive distillation and shortcut models reduce the number of sampling steps by distilling or shortcutting iterative samplers~\citep{salimans2022progressive,frans2024one}. Recent mean-flow and flow-map-matching approaches further seek direct map-based generation~\citep{geng2025mean,boffi2025flow}. These approaches primarily focus on sampling acceleration, often through distillation, consistency constraints, or specialized schedules. Our goal is different: we seek to recover the quadratic OT map and its associated displacement interpolation from unpaired samples, thereby obtaining one-step generation as a consequence of OT map learning.

\paragraph{Unpaired Translation and Cycle Consistency.}
Cycle-consistent adversarial methods learn mappings between unpaired domains by combining adversarial distribution matching with inverse-consistency constraints~\citep{zhu2017unpaired}. These methods are highly influential in image translation, but they do not in general identify the quadratic OT map or the Wasserstein geodesic between distributions. Our use of cycle consistency is instead tied to OT map recovery: the forward and backward maps define complementary displacement interpolants, while cycle consistency serves as a soft regularization aims to promote an approximately inverse relationship between the two maps and thereby imposes structural coherence on the induced transport paths.

\subsection{Organization}
Our paper is organized as follows.
Section~\ref{sec:problem_setup} is concerned with a review of the quadratic optimal transport formulation and the associated McCann displacement interpolation, which provide the foundation for our forward–backward transport framework.
In Section~\ref{sec:proposed_framework}, we propose our bidirectional map-based optimal transport framework, CyclOT, and introduce the corresponding objective function in the theoretical framework and its neural estimations that we use in the practical setting.
In Section~\ref{sec:global_recovery}, we further prove that, under suitable uniform approximation assumptions on the generator and adversarial network classes, minimizers of the proposed objective converge to the quadratic optimal transport maps.
We provide a local analysis of the proposed framework and further investigate the role of each term in the objective in Section~\ref{sec:theory}.  In particular, we study how cycle consistency restricts the feasible class of transport maps, how the action and cycle terms interact near inverse-compatible solutions, and how endpoint matching and cycle consistency suppress distinct forms of mode collapse.
 Section~\ref{sec:experiments} is devoted to evaluating the CyclOT on five datasets: \emph{Synthetic Swiss Roll}, \emph{MNIST}, \emph{CelebA}, \emph{single-cell data}, and \emph{chest X-ray images}. We compare CyclOT against five representative baselines: entropic OT (Sinkhorn)~\citep{cuturi2013sinkhorn}, Neural OT~\citep{korotin2023neural}, ICNN-based OT~\citep{makkuva2020optimal}, rectified flow~\citep{liu2022rectified}, and mini-batch OT-guided flow matching~\citep{tong2023improving}. Our experiments demonstrate that the proposed framework enables scalable and stable learning of optimal transport maps.

\section{Problem Setup \& Notation}
\label{sec:problem_setup}

\subsection{Quadratic optimal transport}
\label{subsec:quadratic_ot}

For a detailed introduction to optimal transport we recommend~\cite{villani2009optimal}.
Let \(\mathcal P_2(\mathbb R^d)\) denote the set of Borel probability
measures on \(\mathbb R^d\) with finite second moment. For
\(\mu,\nu\in\mathcal P_2(\mathbb R^d)\), let
\(\Pi(\mu,\nu)\) denote the set of their couplings. The quadratic
Wasserstein distance is defined by
\[
W_2^2(\mu,\nu)
:=
\inf_{\pi\in\Pi(\mu,\nu)}
\int_{\mathbb R^d\times\mathbb R^d}
\|x-y\|^2\,d\pi(x,y).
\]
Throughout the paper, \(\|\cdot\|\) denotes the Euclidean norm and
\(\operatorname{Id}\) denotes the identity map. For a measurable map \(T:\mathbb R^d\to\mathbb R^d\), the pushforward
\(T_\#\mu\) is defined by
\[
(T_\#\mu)(A)
=
\mu\bigl(T^{-1}(A)\bigr)
\]
for every Borel set \(A\subset\mathbb R^d\). Equivalently,
\(T_\#\mu=\nu\) means that \(T(X)\sim\nu\) whenever \(X\sim\mu\).

Given \(\mu_0,\mu_1\in\mathcal P_2(\mathbb R^d)\), the Monge problem
for the quadratic cost seeks a measurable map transporting \(\mu_0\)
to \(\mu_1\) while minimizing
\[
C_{\mu_0}(T)
:=
\frac12
\int_{\mathbb R^d}
\|T(x)-x\|^2\,d\mu_0(x).
\]
Thus, the quadratic Monge problem is
\begin{equation}
\inf_{\substack{
T:\mathbb R^d\to\mathbb R^d\ \mathrm{measurable}\\
T_\#\mu_0=\mu_1
}}
C_{\mu_0}(T).
\label{eq:quadratic_monge_problem}
\end{equation}
Note that the factor \(1/2\) is included in the Monge cost but not in the
definition of \(W_2^2\). If \(\mu_0\) is absolutely continuous with respect to Lebesgue measure, Brenier's theorem gives a unique minimizer \(T^\star\), up to \(\mu_0\)-almost-everywhere equality, of the form
\[
T^\star=\nabla\phi
\qquad
\mu_0\text{-a.e.},
\]
where the potential function \(\phi:\mathbb R^d\to(-\infty,+\infty]\) is convex
\citep{brenier1991polar,villani2009optimal}. In particular,
\[
T^\star_\#\mu_0=\mu_1,
\qquad
C_{\mu_0}(T^\star)
=
\frac12W_2^2(\mu_0,\mu_1).
\]
When \(\mu_1\) is also absolutely continuous, the reverse quadratic
Monge problem admits a unique Brenier map \(S^\star\), up to
\(\mu_1\)-almost-everywhere equality, satisfying
\[
S^\star_\#\mu_1=\mu_0.
\]
Suitable representatives of the two maps satisfy
\[
S^\star\circ T^\star
=
\operatorname{Id}
\quad\mu_0\text{-a.e.},
\qquad
T^\star\circ S^\star
=
\operatorname{Id}
\quad\mu_1\text{-a.e.},
\]
where $\operatorname{Id}$ denotes the identity map on the corresponding space.

Moreover,
\[
C_{\mu_1}(S^\star)
=
\frac12W_2^2(\mu_0,\mu_1),
\]
and hence
\begin{equation}
C_{\mu_0}(T^\star)+C_{\mu_1}(S^\star)
=
W_2^2(\mu_0,\mu_1).
\label{eq:bidirectional_brenier_action}
\end{equation}
We refer to \((T^\star,S^\star)\) as the forward--reverse Brenier pair.

For the population recovery results, we work under the following standing
setting. Let \(\mathcal X\subset\mathbb R^d\) be a nonempty compact convex
set with nonempty interior, and assume that
\[
\mu_0,\mu_1\in\mathcal P_{2,\mathrm{ac}}(\mathcal X),
\]
where
\[
\mathcal P_{2,\mathrm{ac}}(\mathcal X)
:=
\left\{
\mu\in\mathcal P_2(\mathbb R^d):
\mu(\mathcal X)=1,\ 
\mu\ll\mathcal L^d
\right\}.
\]
Here \(\mathcal L^d\) denotes the \(d\)-dimensional Lebesgue measure.

\subsection{Displacement interpolation}
\label{subsec:displacement_interpolation}

The forward Brenier map induces the McCann displacement interpolation
\citep{mccann1997convexity}
\begin{equation}
F_t^\star(x)
:=
(1-t)x+tT^\star(x),
\qquad
\mu_t^\star
:=
(F_t^\star)_\#\mu_0,
\qquad
t\in[0,1].
\label{eq:maccann_interp}
\end{equation}
Convexity of \(\mathcal X\) ensures that
\(F_t^\star(x)\in\mathcal X\) whenever
\(x,T^\star(x)\in\mathcal X\). The curve
\((\mu_t^\star)_{t\in[0,1]}\) is the constant-speed quadratic
Wasserstein geodesic from \(\mu_0\) to \(\mu_1\) satisfying
\[
W_2(\mu_s^\star,\mu_t^\star)
=
|t-s|W_2(\mu_0,\mu_1),
\qquad
s,t\in[0,1].
\]

The same interpolation can be represented using the reverse Brenier map.
Define
\[
G_t^\star(y)
:=
(1-t)S^\star(y)+ty,
\qquad
t\in[0,1].
\]
The almost-everywhere inverse relations imply
\begin{equation}
(G_t^\star)_\#\mu_1
=
(F_t^\star)_\#\mu_0
=
\mu_t^\star,
\qquad
t\in[0,1].
\label{eq:forward_reverse_brenier_interpolation}
\end{equation}
Indeed, if \(y=T^\star(x)\), then
\[
G_t^\star(y)
=
(1-t)S^\star(T^\star(x))+tT^\star(x)
=
(1-t)x+tT^\star(x)
=
F_t^\star(x)
\]
for \(\mu_0\)-almost every \(x\). Thus, although \(G_t^\star\) is
constructed from the reverse map, it parameterizes the same path in the
time direction from \(\mu_0\) to \(\mu_1\).

Each particle in \eqref{eq:maccann_interp} travels along a straight line
with constant velocity \(T^\star(x)-x\). Consequently,
\[
\int_0^1
\mathbb E_{X\sim\mu_0}
\left[
\frac12
\left|
\partial_tF_t^\star(X)
\right|^2
\right]dt
=
\frac12
\mathbb E_{X\sim\mu_0}
\left[
|T^\star(X)-X|^2
\right]\
=
\frac12W_2^2(\mu_0,\mu_1).
\]
This identity motivates the quadratic-action term in the proposed
objective, while \eqref{eq:forward_reverse_brenier_interpolation}
provides the ideal synchronization property that the learned
forward--backward construction seeks to recover.

\subsection{Bidirectional learning problem}
\label{subsec:bidirectional_learning_problem}

We observe independent, unpaired samples from \(\mu_0\) and \(\mu_1\).
Our goal is to learn a forward map \(f\) and a backward map \(g\) that
approximate \(T^\star\) and \(S^\star\), respectively. In particular, we
seek endpoint marginal agreement,
\[
f_\#\mu_0\approx\mu_1,
\qquad
g_\#\mu_1\approx\mu_0,
\]
together with approximate inverse consistency,
\[
g\circ f\approx\operatorname{Id}
\quad\text{on }\mu_0,
\qquad
f\circ g\approx\operatorname{Id}
\quad\text{on }\mu_1.
\]
Marginal matching and inverse consistency alone do not identify the
Brenier pair: many invertible maps can transport one distribution to the
other. The quadratic transport cost supplies the additional variational
criterion that selects the minimum-cost pair. CyclOT therefore combines endpoint matching, two-sided cycle consistency, and
bidirectional quadratic action.

For a measurable vector-valued function \(h\), we use the notation
\[
\|h\|_{L^2(\mu)}
:=
\left(
\int_{\mathbb R^d}\|h(x)\|^2\,d\mu(x)
\right)^{1/2}.
\]
Accordingly, population recovery of the Brenier pair will be measured by
\[
\|f-T^\star\|_{L^2(\mu_0)}
+
\|g-S^\star\|_{L^2(\mu_1)}.
\]
Section~\ref{sec:proposed_framework} turns the three structural requirements
above into a neural population objective.

Additional background on flow matching, rectified flow, and
independence-constrained Wasserstein barycenters is collected in
Appendix~\ref{app:background}.

\section{Proposed Framework}
\label{sec:proposed_framework}

We introduce a bidirectional map-based framework for learning the
quadratic optimal transport between \(\mu_0\) and \(\mu_1\) from
independent endpoint samples. The framework consists of three neural approximation: a neural forward map, a neural reverse map, and neural endpoint-matching penalties, which together formulate the three training objectives: action, marginal matching, and cycle consistency. We first define the maps and their induced interpolants, then introduce the population
objective and its practical minibatch implementation.

\subsection{Forward-backward map framework}
\label{subsec:motivation_setting}

Let
\[
\mathcal F=\{f_\theta:\theta\in\Theta\},
\qquad
\mathcal G=\{g_\phi:\phi\in\Phi\}
\]
be admissible classes of maps from \(\mathcal X\) to \(\mathcal X\).
We interpret \(f_\theta\) as a forward map from \(\mu_0\) toward
\(\mu_1\), and \(g_\phi\) as a reverse map from \(\mu_1\) toward
\(\mu_0\).

For \(f\in\mathcal F\) and \(g\in\mathcal G\), define the induced
displacement interpolants
\[
F_t^f(x)
:=
(1-t)x+t f(x),
\qquad
\mu_t^f
:=
(F_t^f)_\#\mu_0,
\]
and
\[
G_t^g(y)
:=
(1-t)g(y)+t y,
\qquad
\nu_t^g
:=
(G_t^g)_\#\mu_1,
\qquad
t\in[0,1].
\]
Their endpoint laws are
\[
\mu_0^f=\mu_0,
\qquad
\mu_1^f=f_\#\mu_0,
\]
and
\[
\nu_0^g=g_\#\mu_1,
\qquad
\nu_1^g=\mu_1.
\]
Both paths are therefore oriented from \(\mu_0\) to \(\mu_1\), although
they are generated from opposite endpoints. Convexity of
\(\mathcal X\) ensures that the interpolated points remain in
\(\mathcal X\) whenever \(f\) and \(g\) are \(\mathcal X\)-valued.

The two intermediate laws need not agree for arbitrary \(f\) and \(g\).
However, if
\[
f_\#\mu_0=\mu_1,
\qquad
g\circ f=\operatorname{Id}
\quad\mu_0\text{-a.e.},
\]
then
\[
G_t^g(f(x))
=
(1-t)g(f(x))+tf(x)
=
(1-t)x+tf(x)
=
F_t^f(x)
\]
for \(\mu_0\)-almost every \(x\). Consequently,
\[
\mu_t^f=\nu_t^g
\qquad
\text{for every }t\in[0,1].
\]
Thus, the framework does not directly penalize discrepancies between
intermediate marginals. Their synchronization follows from endpoint
matching and forward-backward inverse consistency.

\subsection{Neural population objective}
\label{subsec:population_objective}

We enforce the endpoint constraints using neural-approximation Jensen Shannon Divergence (neural JSD) objectives. Let \(P,Q\in\mathcal P(\mathcal X)\), where \(P\) is a generated distribution and \(Q\) is its target distribution. For a target-positive discriminator
\(D:\mathcal X\to(0,1)\), define
\[
\Phi_{P,Q}(D)
:=
\log2
+
\frac12
\left[
\int_{\mathcal X}\log(1-D)\,dP
+
\int_{\mathcal X}\log D\,dQ
\right].
\]
The unrestricted supremum of \(\Phi_{P,Q}\) is the
Jensen-Shannon divergence:
\[
\operatorname{JSD}(P,Q)
=
\sup_{\substack{
D:\mathcal X\to(0,1)\\
D\ {\rm measurable}
}}
\Phi_{P,Q}(D).
\]
Let
$\mathcal D_m\subset C(\mathcal X;(0,1))$
be a neural network class of capacity level \(m\), and assume
that \(D\equiv1/2\in\mathcal D_m\). Define the neural JSD with capacity level $m$:
\begin{equation}
\widehat{\operatorname{JSD}}_m(P,Q)
:=
\sup_{D\in\mathcal D_m}\Phi_{P,Q}(D).
\label{eq:neural_jsd_def}
\end{equation}
Note that, because $\Phi_{P,Q}\left(\frac12\right)=0$, we have
\[
0
\leq
\widehat{\operatorname{JSD}}_m(P,Q)
\leq
\operatorname{JSD}(P,Q).
\]

The bidirectional neural endpoint penalty is
\begin{equation}
\widehat{\mathcal B}_m(f,g)
:=
\widehat{\operatorname{JSD}}_m(f_\#\mu_0,\mu_1)
+
\widehat{\operatorname{JSD}}_m(g_\#\mu_1,\mu_0).
\label{eq:bidirectional_neural_jsd}
\end{equation}
The two terms in \eqref{eq:bidirectional_neural_jsd} are implemented
using separate forward and reverse neural JSD with the same capacity level $m$.

Furthermore, we define the bidirectional quadratic action by
\begin{equation}
\begin{aligned}
\mathcal A(f,g)
:=
\frac12
\int_{\mathcal X}
\|f(x)-x\|^2\,d\mu_0(x)
+
\frac12
\int_{\mathcal X}
\|g(y)-y\|^2\,d\mu_1(y),
\end{aligned}
\label{eq:bidirectional_action}
\end{equation}
and the two-sided cycle loss by
\begin{equation}
\begin{aligned}
\mathcal C(f,g)
:=
\int_{\mathcal X}
\|g(f(x))-x\|^2\,d\mu_0(x)
+
\int_{\mathcal X}
\|f(g(y))-y\|^2\,d\mu_1(y).
\end{aligned}
\label{eq:bidirectional_cycle}
\end{equation}

For an action coefficient \(\lambda>0\), the neural population objective is
\begin{equation}
\mathcal E_{\lambda,m}(f,g)
:=
\lambda\mathcal A(f,g)
+
\widehat{\mathcal B}_m(f,g)
+
\mathcal C(f,g).
\label{eq:population_objective}
\end{equation}
Here, the three terms have complementary roles:
\begin{itemize}[leftmargin=*]

\item
The endpoint term \(\widehat{\mathcal B}_m\) promotes the forward and
reverse marginal constraints via the neural JSD family with capacity level $m$
\[
f_\#\mu_0\approx\mu_1,
\qquad
g_\#\mu_1\approx\mu_0.
\]

\item
The cycle term \(\mathcal C\) explicitly promotes approximate inverse consistency in both directions for more stable and efficient learning by avoid mode collapses.

\item
The action term \(\mathcal A\) favors maps with small quadratic
displacement and supplies the optimal-transport selection criterion
among approximately endpoint-feasible inverse pairs.

\end{itemize}

The action coefficient \(\lambda\) controls the action term relative to endpoint matching and cycle consistency. Small positive values allow the
feasibility terms to dominate while retaining the quadratic-cost
criterion.

\subsection{Practical optimization algorithm}
\label{subsec:training_procedure}

In practice, we choose \(f_\theta\) and \(g_\phi\) to be represented by neural
networks. We use two target-positive discriminators:
\(D^1_{\omega_1}\) distinguishes samples from \(\mu_1\) from forward
generated samples, while \(D^0_{\omega_0}\) distinguishes samples from
\(\mu_0\) from reverse generated samples.

At iteration \(k\), draw independent minibatches
\[
\{x_0^i\}_{i=1}^B
\overset{\mathrm{i.i.d.}}{\sim}
\mu_0,
\qquad
\{x_1^i\}_{i=1}^B
\overset{\mathrm{i.i.d.}}{\sim}
\mu_1.
\]
The empirical forward and reverse discriminator scores are
\begin{equation}
\begin{aligned}
\widehat{\mathcal V}^{\,1}_B(\theta,\omega_1)
&:=
\frac{1}{2B}
\sum_{i=1}^B
\left[
\log D^1_{\omega_1}(x_1^i)
+
\log\left(
1-D^1_{\omega_1}(f_\theta(x_0^i))
\right)
\right],\\
\widehat{\mathcal V}^{\,0}_B(\phi,\omega_0)
&:=
\frac{1}{2B}
\sum_{i=1}^B
\left[
\log D^0_{\omega_0}(x_0^i)
+
\log\left(
1-D^0_{\omega_0}(g_\phi(x_1^i))
\right)
\right].
\end{aligned}
\label{eq:empirical_discriminator_scores}
\end{equation}
The discriminator parameters are updated by maximizing these scores,
or equivalently by minimizing $\widehat{\mathcal L}_{D^1} = -\widehat{\mathcal V}^{\,1}_B$ and $\widehat{\mathcal L}_{D^0} = -\widehat{\mathcal V}^{\,0}_B$. 

For the map update, we use the non-saturating adversarial loss
\begin{equation}
\widehat{\mathcal L}_{\mathrm{adv}}^{\mathrm{NS}}
:=
-\frac{1}{2B}
\sum_{i=1}^B
\left[
\log D^1_{\omega_1}(f_\theta(x_0^i))
+
\log D^0_{\omega_0}(g_\phi(x_1^i))
\right].
\label{eq:non_saturating_adversarial_loss}
\end{equation}
This is a practical surrogate for the minimax neural-JSD map update and
has the same intended marginal-matching equilibrium in the idealized
expressive-discriminator setting.

The empirical action and cycle losses are
\begin{equation}
\begin{aligned}
\widehat{\mathcal A}_B(\theta,\phi)
:=
\frac{1}{2B}
\sum_{i=1}^B
\Bigl[
\|f_\theta(x_0^i)-x_0^i\|^2
+
\|g_\phi(x_1^i)-x_1^i\|^2
\Bigr]
\end{aligned}
\label{eq:empirical_bidirectional_action}
\end{equation}
and
\begin{equation}
\begin{aligned}
\widehat{\mathcal C}_B(\theta,\phi)
:=
\frac{1}{B}
\sum_{i=1}^B
\Bigl[
\|g_\phi(f_\theta(x_0^i))-x_0^i\|^2
+
\|f_\theta(g_\phi(x_1^i))-x_1^i\|^2
\Bigr].
\end{aligned}
\label{eq:empirical_cycle_loss}
\end{equation}
The factor \(1/2\) appears in
\(\widehat{\mathcal A}_B\), consistently with
\eqref{eq:bidirectional_action}, but not in
\(\widehat{\mathcal C}_B\), because
\eqref{eq:bidirectional_cycle} is the sum of the two directional cycle
errors.

We use the action-weight schedule
\begin{equation}
\lambda_k
=
\lambda_{\mathrm{init}}\beta^k,
\qquad
\lambda_{\mathrm{init}}>0,
\qquad
0<\beta<1,
\qquad
k=0,\ldots,K-1.
\label{eq:transport_weight_schedule}
\end{equation}
The map parameters are updated by minimizing
\begin{equation}
\widehat{\mathcal L}_{\mathrm{map},k}
=
\widehat{\mathcal L}_{\mathrm{adv}}^{\mathrm{NS}}
+
\widehat{\mathcal C}_B
+
\lambda_k\widehat{\mathcal A}_B.
\label{eq:practical_map_loss}
\end{equation}

The complete procedure, which alternates between discriminator ascent and
map descent, is summarized in Algorithm~\ref{alg:sfb_ot}.

\begin{algorithm}[t]
\caption{Forward-Backward Neural Optimal Transport}
\label{alg:sfb_ot}
\begin{algorithmic}[1]

\Require
Endpoint distributions \(\mu_0,\mu_1\);
generators \(f_\theta,g_\phi\);
discriminators \(D^1_{\omega_1},D^0_{\omega_0}\);
batch size \(B\);
number of iterations \(K\);
initial action weight \(\lambda_{\mathrm{init}}>0\);
annealing factor \(\beta\in(0,1)\).

\For{\(k=0,\ldots,K-1\)}

    \State Draw independent minibatches
    \[
    \{x_0^i\}_{i=1}^B\sim\mu_0,
    \qquad
    \{x_1^i\}_{i=1}^B\sim\mu_1.
    \]

    \State Compute
    \(\widehat{\mathcal V}^{\,1}_B\) and
    \(\widehat{\mathcal V}^{\,0}_B\) using
    \eqref{eq:empirical_discriminator_scores}.

    \State Update \(\omega_1,\omega_0\) by discriminator ascent:
    \[
    \omega_1
    \leftarrow
    \omega_1
    +
    \eta_D\nabla_{\omega_1}
    \widehat{\mathcal V}^{\,1}_B,
    \qquad
    \omega_0
    \leftarrow
    \omega_0
    +
    \eta_D\nabla_{\omega_0}
    \widehat{\mathcal V}^{\,0}_B.
    \]

    \State Set
    \[
    \lambda_k
    =
    \lambda_{\mathrm{init}}\beta^k.
    \]

    \State Compute
    \(\widehat{\mathcal L}_{\mathrm{adv}}^{\mathrm{NS}}\),
    \(\widehat{\mathcal A}_B\), and
    \(\widehat{\mathcal C}_B\).

    \State Form
    \[
    \widehat{\mathcal L}_{\mathrm{map},k}
    =
    \widehat{\mathcal L}_{\mathrm{adv}}^{\mathrm{NS}}
    +
    \widehat{\mathcal C}_B
    +
    \lambda_k\widehat{\mathcal A}_B.
    \]

    \State Update \(\theta,\phi\) by map descent:
    \[
    \theta
    \leftarrow
    \theta
    -
    \eta_G\nabla_\theta
    \widehat{\mathcal L}_{\mathrm{map},k},
    \qquad
    \phi
    \leftarrow
    \phi
    -
    \eta_G\nabla_\phi
    \widehat{\mathcal L}_{\mathrm{map},k}.
    \]

\EndFor

\State \Return \(f_\theta,g_\phi\).

\end{algorithmic}
\end{algorithm}

For direct consistency with the compact-domain analysis, the generator
outputs should remain in \(\mathcal X\). This can be enforced through a
range-preserving output parameterization or, when convenient, by composing
raw neural networks with the metric projection onto the compact convex
set \(\mathcal X\).

Theorem~\ref{thm:neural_jsd_ot_recovery} concerns global minimizers of
the fixed population minimax objective
\(\mathcal E_{\lambda,m}\). It does not establish convergence of the
alternating stochastic algorithm, finite-sample discriminator training,
the non-saturating surrogate
\eqref{eq:non_saturating_adversarial_loss}, or the changing-weight
schedule \eqref{eq:transport_weight_schedule}. These practical choices
are motivated by the population objective but require a separate
optimization analysis.

\section{Global Recovery of the Quadratic Brenier Maps}
\label{sec:global_recovery}

Section~\ref{sec:proposed_framework} introduced the neural population
objective
\[
\mathcal E_{\lambda,m}(f,g)
=
\lambda\mathcal A(f,g)
+
\widehat{\mathcal B}_m(f,g)
+
\mathcal C(f,g).
\]
We now show that its global minimizers recover the forward and reverse
quadratic Brenier maps when the discriminator level becomes sufficiently
large and the action weight becomes sufficiently small but strictly positive.

The proof has three components. First, neural approximation provides a
Brenier-approximating comparison sequence whose action converges to the
optimal value and whose cycle loss vanishes. Second, an increasing
discriminator family converts small neural JSD into uniform Wasserstein
control of the generated marginals. Third, stability of the quadratic
Monge problem converts approximate marginal feasibility and near-optimal
action into \(L^2\)-closeness to the Brenier maps.

\subsection{Neural approximation ingredients}
\label{subsec:neural_approximation_ingredients}

\subsubsection{Generator approximation \& range preservation}
Classical universal-approximation results imply that suitable scalar neural
families are dense in \(C(K)\) under the uniform norm for every compact
\(K\subset\mathbb R^d\), cf.~\cite{bandeira2026mathematics}. For example, Hornik~\cite{hornik1991approximation}
establishes uniform approximation on compact sets for continuous, bounded,
nonconstant activations, while Leshno et al.~\cite{leshno1993multilayer} characterize universal approximation, under suitable regularity assumptions, by nonpolynomiality of the activation. Applying these results coordinatewise gives local uniform approximation of continuous maps \(K\to\mathbb R^d\).

Uniform approximation also implies \(L^p\)-approximation for every finite
Borel measure \(\mu\) on \(K\) and \(1\leq p<\infty\). Indeed, if
\(h_c\in C(K)\), then
\[
\|N-h_c\|_{L^p(\mu)}
\leq
\mu(K)^{1/p}\|N-h_c\|_\infty.
\]
Together with the fact that \(C(K)\) is dense in \(L^p(\mu)\) and triangle inequality, we have the neural family is dense in \(L^p(\mu)\). The vector-valued conclusion follows coordinatewise.

We must additionally ensure that admissible generators take values in
\(\mathcal X\). Let
\(\mathcal N_{\mathrm f},\mathcal N_{\mathrm g}
\subset C(\mathbb R^d;\mathbb R^d)\) be locally uniformly dense raw neural
classes, and define their range-preserving subclasses by
\[
\mathcal F
:=
\{N|_{\mathcal X}:N\in\mathcal N_{\mathrm f},\
  N(\mathcal X)\subseteq\mathcal X\},
\qquad
\mathcal G
:=
\{N|_{\mathcal X}:N\in\mathcal N_{\mathrm g},\
  N(\mathcal X)\subseteq\mathcal X\}.
\]
These subclasses remain uniformly dense in
\(C(\mathcal X;\mathcal X)\). To see this, fix
\(x_0\in\operatorname{int}(\mathcal X)\) and choose \(\rho>0\) such that $B(x_0,\rho)\subset\mathcal X$. For \(h\in C(\mathcal X;\mathcal X)\) and \(0<\delta<1\), set
\[
h_\delta:=(1-\delta)h+\delta x_0.
\]
Convexity gives
\[
B(h_\delta(x),\delta\rho)\subset\mathcal X
\qquad\text{for every }x\in\mathcal X.
\]
Indeed, if \(\|z-h_\delta(x)\|<\delta\rho\), then
\[
z
=
(1-\delta)h(x)
+
\delta\left(x_0+\frac{z-h_\delta(x)}{\delta}\right)
\in\mathcal X.
\]
Choose a raw neural map \(N\) with
\[
\|N-h_\delta\|_\infty<\frac{\delta\rho}{2}.
\]
Then \(N(\mathcal X)\subseteq\mathcal X\), while
\[
\|N-h\|_\infty
\leq
\frac{\delta\rho}{2}
+
\delta\operatorname{diam}(\mathcal X)
\longrightarrow0
\qquad\text{as }\delta\downarrow0.
\]
Thus, \(\mathcal F\) and \(\mathcal G\) are uniformly dense in
\(C(\mathcal X;\mathcal X)\). Moreover, convexity ensures that the
straight-line interpolation between \(x\in\mathcal X\) and
\(f(x)\in\mathcal X\) remains in \(\mathcal X\).

\subsubsection{Joint map \& cycle approximation}
To approximate an almost-everywhere inverse pair \((T,S)\), separate
\(L^2\)-approximations are not sufficient. Indeed, even if $T_j\to T \ \text{in }L^2(\mu_0)$ and $S_j\to S \ \text{in }L^2(\mu_1)$, it does not generally follow that $S_j\circ T_j\to S\circ T$ or $T_j\circ S_j\to T\circ S$ in the corresponding \(L^2\) spaces. Consequently, the cycle loss need not
converge to zero. The following lemma constructs a single approximating
sequence that controls both map-approximation errors and the cycle loss
simultaneously.

\begin{lemma}[Joint approximation of an almost-everywhere inverse pair]
\label{lem:joint_cycle_approximation}
Let \(\mathcal X\subset\mathbb R^d\) be a nonempty compact convex set, let
\(\mu_0,\mu_1\in\mathcal P(\mathcal X)\), and let
\(T,S:\mathcal X\to\mathcal X\) be measurable maps satisfying
\[
T_\#\mu_0=\mu_1,
\qquad
S\circ T=\operatorname{Id}
\quad\mu_0\text{-a.e.},
\qquad
T\circ S=\operatorname{Id}
\quad\mu_1\text{-a.e.}
\]
Assume that $\mathcal F,\mathcal G\subset C(\mathcal X;\mathcal X)$ are uniformly dense in \(C(\mathcal X;\mathcal X)\). Then there exists a sequence $(T_j,S_j)\in\mathcal F\times\mathcal G$ such that
\[
\|T_j-T\|_{L^2(\mu_0)}
+
\|S_j-S\|_{L^2(\mu_1)}
\longrightarrow0
\]
and
\[
\mathcal C(T_j,S_j)\longrightarrow0.
\]
\end{lemma}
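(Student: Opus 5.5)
The plan is to approximate \(T\) and \(S\) by continuous maps using Lusin's theorem, on a common large set where the continuous approximants are still exact inverses of each other. Then uniform density replaces these continuous maps by elements of \(\mathcal F\times\mathcal G\). Throughout, \(\mathcal X\) is compact, so every map in question is bounded by \(R:=\operatorname{diam}(\mathcal X)\). Hence any integrand of the form \(\|\cdot\|^2\) is bounded by \(R^2\), and \(L^2\) errors reduce to measures of exceptional sets plus uniform errors.

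\emph{Step 1 (Lusin on a common good set).} Fix \(\varepsilon>0\). By Lusin's theorem there is a compact \(K_0\subset\mathcal X\) with \(\mu_0(\mathcal X\setminus K_0)<\varepsilon\), \(T|_{K_0}\) continuous, and \(S\circ T=\operatorname{Id}\) on \(K_0\). Similarly there is a compact \(K_1\) with \(\mu_1(\mathcal X\setminus K_1)<\varepsilon\), \(S|_{K_1}\) continuous, and \(T\circ S=\operatorname{Id}\) on \(K_1\). Set \(A:=K_0\cap T^{-1}(K_1)\). Since \(T_\#\mu_0=\mu_1\), we get \(\mu_0(\mathcal X\setminus A)<2\varepsilon\). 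By inner regularity we may shrink \(A\) to a compact set with the same bound. Then \(T(A)\) is compact, \(T|_A\) is a continuous bijection onto \(T(A)\), and its inverse is \(S|_{T(A)}\), which is continuous. Moreover \(\mu_1(T(A))\geq\mu_0(A)>1-2\varepsilon\), because \(T^{-1}(T(A))\supseteq A\). Let \(B:=T(A)\).

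\emph{Step 2 (continuous extension into \(\mathcal X\)).} Extend \(T|_A\) and \(S|_B\) to continuous maps \(\tilde T,\tilde S:\mathcal X\to\mathcal X\). Tietze extension applied coordinatewise gives extensions into \(\mathbb R^d\); composing with the metric projection onto the closed convex set \(\mathcal X\) keeps them continuous and leaves the values on \(A\) and \(B\) unchanged. On \(A\) we have \(\tilde S\circ\tilde T=\operatorname{Id}\), and on \(B\) we have \(\tilde T\circ\tilde S=\operatorname{Id}\). Consequently
\[
\|\tilde T-T\|_{L^2(\mu_0)}^2+\|\tilde S-S\|_{L^2(\mu_1)}^2\leq 4R^2\varepsilon,
\]
and the same bound holds for \(\mathcal C(\tilde T,\tilde S)\).

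\emph{Step 3 (neural replacement).} This step controls the composition, which is the main obstacle. Since \(\tilde S\) is uniformly continuous on the compact set \(\mathcal X\), choose \(\eta>0\) with \(\omega_{\tilde S}(\eta)\leq\varepsilon\) and \(\omega_{\tilde T}(\eta)\leq\varepsilon\), where \(\omega\) denotes the modulus of continuity. By uniform density, pick \(T_\varepsilon\in\mathcal F\) and \(S_\varepsilon\in\mathcal G\) within \(\min(\eta,\varepsilon)\) of \(\tilde T\) and \(\tilde S\) in sup norm. For \(x\in A\),
\[
\|S_\varepsilon(T_\varepsilon(x))-x\|\leq\|S_\varepsilon-\tilde S\|_\infty+\|\tilde S(T_\varepsilon(x))-\tilde S(\tilde T(x))\|\leq 2\varepsilon,
\]
and off \(A\) the integrand is at most \(R^2\). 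The direction \(T_\varepsilon\circ S_\varepsilon\) on \(B\) is handled symmetrically. Combining these estimates gives \(\mathcal C(T_\varepsilon,S_\varepsilon)\leq 8\varepsilon^2+4R^2\varepsilon\), and the map errors are bounded by \(O(\sqrt\varepsilon)\).

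Taking \(\varepsilon=1/j\) produces the required sequence \((T_j,S_j)\). The delicate points are the two just handled. First, the good sets must be compatible, meaning \(B=T(A)\) has large \(\mu_1\)-mass, which uses the pushforward identity. Second, the composition error is controlled only through the uniform continuity of the continuous extensions, not of \(T\) and \(S\) themselves.
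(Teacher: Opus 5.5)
Your proposal is correct and follows essentially the same route as the paper: Lusin's theorem on both sides, a compatible compact good set inside $K_0\cap T^{-1}(K_1)$ whose image under $T$ carries large $\mu_1$-mass and on which the restrictions of $T$ and $S$ are exact mutual inverses, then Tietze extension composed with the metric projection onto $\mathcal X$, and finally uniform density combined with a modulus-of-continuity estimate to control the compositions. The differences are cosmetic: you build $T\circ S=\operatorname{Id}$ into $K_1$ directly, whereas the paper derives it on $T(K_0)$, and you fix the modulus threshold first and then the density tolerance, whereas the paper picks $e_j$ so that $e_j+\omega(e_j)<1/j$.
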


See Appendix~\ref{a:proof_joint_cycle_approximation} for a proof of the lemma.

\subsubsection{Discriminator uniform approximation}
Universal approximation also yields discriminator classes that are dense in
\(C(\mathcal X;(0,1))\). For continuous nonpolynomial hidden-layer
activations, including ReLU, leaky ReLU, softplus, and ELU, this follows
from Leshno et al.~\cite{leshno1993multilayer}. For continuous sigmoidal
hidden-layer activations, such as the logistic sigmoid and hyperbolic
tangent, see Cybenko~\cite{cybenko1989approximation} and
Hornik~\cite{hornik1991approximation}.

Regardless of the hidden-layer activation, we use a logistic-sigmoid
output. Thus every discriminator has the form
\[
D=\sigma\circ\ell,
\qquad
\sigma(u):=\frac{1}{1+e^{-u}},
\]
where \(\ell:\mathcal X\to\mathbb R\) is a neural logit. In an
implementation based on a binary-cross-entropy-with-logits loss, this
sigmoid transformation may be applied implicitly for numerical stability.

For each \(r\in\mathbb N\), let \(\mathcal L_r\) be the union of the
logit realization classes of finitely many neural architectures with
compact parameter sets, and define
\[
\mathcal R_r
:=
\{\sigma\circ\ell:\ell\in\mathcal L_r\}.
\]
Define the discriminator levels cumulatively by
\[
\mathcal D_m
:=
\{D\equiv1/2\}
\cup
\bigcup_{r=1}^m\mathcal R_r.
\]
Then
\[
\mathcal D_1\subseteq\mathcal D_2\subseteq\cdots,
\qquad
D\equiv1/2\in\mathcal D_m,
\]
and every \(D\in\mathcal D_m\) takes values in \((0,1)\).

Moreover, because \(\mathcal X\) and the parameter sets at each fixed
level are compact, and only finitely many architectures occur in
\(\mathcal D_m\), continuity of the sigmoid-output networks implies that
there exists \(c_m\in(0,1/2)\) such that
\[
c_m\leq D(x)\leq1-c_m
\qquad
\text{for every }D\in\mathcal D_m
\text{ and }x\in\mathcal X.
\]
This fixed-level range bound is established explicitly in
Lemma~\ref{lem:fixed_discriminator_compactness} below.

Suppose that the allowable architectures and parameter bounds increase
with \(r\), so that $\bigcup_{r\geq1}\mathcal L_r$ is uniformly dense in \(C(\mathcal X)\). Let \(H\in C(\mathcal X;(0,1))\). Since \(\mathcal X\) is compact, \(H\) is bounded away from zero and one, and hence
\[
\operatorname{logit}(H)
:=
\log\frac{H}{1-H}
\in C(\mathcal X).
\]
Given \(\varepsilon>0\), choose
\(\ell\in\bigcup_{r\geq1}\mathcal L_r\) such that
\[
\|\ell-\operatorname{logit}(H)\|_\infty<4\varepsilon.
\]
Since the logistic sigmoid is \(1/4\)-Lipschitz,
\[
\|\sigma\circ\ell-H\|_\infty
=
\|\sigma\circ\ell
-\sigma\circ\operatorname{logit}(H)\|_\infty
\leq
\frac14
\|\ell-\operatorname{logit}(H)\|_\infty
<\varepsilon.
\]
The network \(\ell\) belongs to some finite level, so
\(\sigma\circ\ell\in\mathcal D_m\) for all sufficiently large \(m\).
Consequently,
\[
\overline{\bigcup_{m\geq1}\mathcal D_m}
=
C(\mathcal X;(0,1))
\]
in the relative uniform-norm topology.

\subsubsection{Fixed-level discriminator compactness \& continuity}
At each fixed level, compact parameter sets and a uniform range bound yield compact discriminator and log-discriminator classes.

\begin{lemma}[Fixed-level compactness of log-discriminator classes]
\label{lem:fixed_discriminator_compactness}
Fix \(m\in\mathbb N\). Suppose that \(\mathcal D_m\) is the realization
class of finitely many neural architectures whose parameter sets are
compact. Assume that the activations are continuous and that every
realized discriminator is continuous and takes values in \((0,1)\) (i.e. because it has a logistic-sigmoid output). Then, \(\mathcal D_m\) is compact in
\[
\bigl(C(\mathcal X),\|\cdot\|_\infty\bigr),
\]
and the families
\[
\{\log D:D\in\mathcal D_m\},
\qquad
\{\log(1-D):D\in\mathcal D_m\}
\]
are compact in the same space.
\end{lemma}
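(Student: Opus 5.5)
The plan is to show that the realization map from parameters to functions is continuous into \(\bigl(C(\mathcal X),\|\cdot\|_\infty\bigr)\), and then push compactness forward.

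\textbf{Step 1: realization map.} Fix one of the finitely many architectures, with compact parameter set \(\Theta_a\subset\mathbb R^{p_a}\). Let
\[
R_a:\Theta_a\to C(\mathcal X),\qquad \vartheta\mapsto D_\vartheta .
\]
The map \((\vartheta,x)\mapsto D_\vartheta(x)\) is a finite composition of affine maps, continuous activations, and the sigmoid. Hence it is jointly continuous on the compact set \(\Theta_a\times\mathcal X\), and therefore uniformly continuous there. Uniform continuity gives \(\sup_{x\in\mathcal X}|D_\vartheta(x)-D_{\vartheta'}(x)|\to0\) as \(\vartheta'\to\vartheta\). So \(R_a\) is continuous into the sup-norm topology.

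\textbf{Step 2: compactness of \(\mathcal D_m\).} Each image \(R_a(\Theta_a)\) is compact, as a continuous image of a compact set. The class \(\mathcal D_m\) is the finite union of these images, together with the singleton \(\{1/2\}\) in the cumulative definition, which is trivially compact. A finite union of compact sets is compact, so \(\mathcal D_m\) is compact in \(C(\mathcal X)\).

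\textbf{Step 3: the range bound.} This is the key step, because \(\log\) is not uniformly continuous near \(0\). Consider the map \((a,\vartheta,x)\mapsto D_\vartheta(x)\) on the compact set \(\bigsqcup_a\Theta_a\times\mathcal X\). It is continuous and takes values in \((0,1)\), so it attains its minimum and maximum. These extremal values \(c\) and \(1-c'\) therefore lie strictly inside \((0,1)\). Setting \(c_m:=\min(c,c',1/4)\in(0,1/2)\) gives
\[
c_m\le D(x)\le 1-c_m\qquad\text{for all }D\in\mathcal D_m,\ x\in\mathcal X.
\]
The constant discriminator \(1/2\) satisfies this bound trivially. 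This is exactly the fixed-level range bound promised earlier in the paper.

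\textbf{Step 4: compactness of the log classes.} On \([c_m,1-c_m]\), the functions \(\log u\) and \(\log(1-u)\) are Lipschitz with constant \(1/c_m\). Hence the composition operators \(D\mapsto\log D\) and \(D\mapsto\log(1-D)\) satisfy
\[
\|\log D-\log D'\|_\infty\le c_m^{-1}\|D-D'\|_\infty ,
\]
and the same bound holds for \(\log(1-D)\). Restricted to \(\mathcal D_m\), both operators are therefore continuous, indeed Lipschitz, into \(C(\mathcal X)\). The images of the compact set \(\mathcal D_m\) under these operators are the two log-discriminator families, so both are compact.

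The only genuine subtlety is the one handled in Step 3. The uniform bound away from \(0\) and \(1\) must come from joint compactness of parameters and inputs, not from pointwise positivity of each \(D\) separately. Without such a bound, the log operators would fail to be uniformly continuous on \(\mathcal D_m\).
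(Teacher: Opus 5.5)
Your proposal is correct and follows essentially the same route as the paper. You show the realization map is continuous via uniform continuity on the compact product, obtain compactness of $\mathcal D_m$ as a finite union of compact images, derive a uniform range bound from joint compactness of parameters and inputs, and conclude using the $1/c_m$-Lipschitz property of the logarithms on $[c_m,1-c_m]$. For the range bound you take the extremes over the disjoint union, where the paper minimizes $\min\{D,1-D\}$ per architecture. You also treat the constant discriminator $1/2$ from the cumulative definition explicitly, which the paper's proof leaves implicit.
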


\begin{proof}
Index the finitely many architectures allowed at level \(m\) by
\(a=1,\ldots,A_m\). For architecture \(a\), let
\(\Theta_{m,a}\) be its compact parameter set and define
\[
F_{m,a}:\Theta_{m,a}\times\mathcal X\to(0,1),
\qquad
F_{m,a}(\theta,x):=D_{a,\theta}(x).
\]
Because architecture \(a\) consists of finitely many affine layers
composed with continuous activation functions, its realization
\[
F_{m,a}(\theta,x):=D_{a,\theta}(x)
\]
is jointly continuous in the parameters \(\theta\) and the input \(x\). Furthermore, since $\Theta_{m,a}\times\mathcal X$ is compact, \(F_{m,a}\) is uniformly continuous. Consequently, the
realization map
\[
R_{m,a}:\Theta_{m,a}\to C(\mathcal X),
\qquad
R_{m,a}(\theta):=D_{a,\theta},
\]
is continuous with respect to the Euclidean norm on
\(\Theta_{m,a}\) and the uniform norm on \(C(\mathcal X)\). Indeed, if
\(\theta_n\to\theta\), uniform continuity gives
\[
\|D_{a,\theta_n}-D_{a,\theta}\|_\infty
=
\sup_{x\in\mathcal X}
|F_{m,a}(\theta_n,x)-F_{m,a}(\theta,x)|
\longrightarrow0.
\]
Now, since \(\Theta_{m,a}\) is compact, its image $R_{m,a}(\Theta_{m,a})$ is compact in \(C(\mathcal X)\). Therefore,
\[
\mathcal D_m
=
\bigcup_{a=1}^{A_m}R_{m,a}(\Theta_{m,a})
\]
is compact as a finite union of compact sets. That proves the compactness of $\mathcal D_m$.

It remains to prove compactness of the logarithmic classes. For each
architecture \(a\), consider the continuous function
\[
q_{m,a}(\theta,x)
:=
\min\bigl\{
F_{m,a}(\theta,x),
1-F_{m,a}(\theta,x)
\bigr\}.
\]
Because every discriminator takes values strictly in \((0,1)\), we have $q_{m,a}(\theta,x)>0$ on the compact set \(\Theta_{m,a}\times\mathcal X\). It then follows from the continuity of $q_{m,a}$ and compactness of $\Theta_{m,a}\times\mathcal X$ that
\[
\eta_{m,a}
:=
\min_{(\theta,x)\in\Theta_{m,a}\times\mathcal X}
q_{m,a}(\theta,x)
>0.
\]
Since only finitely many architectures are allowed, the number
\[
c_m
:=
\frac12
\min_{1\leq a\leq A_m}\eta_{m,a}
\]
is strictly positive and belongs to \((0,1/2)\). It follows that $c_m\leq D(x)\leq1-c_m$ for every $D\in\mathcal D_m$ and $x\in\mathcal X$. On \([c_m,1-c_m]\), the scalar functions
\[
u\longmapsto\log u,
\qquad
u\longmapsto\log(1-u)
\]
are \(1/c_m\)-Lipschitz. Thus, for \(D,E\in\mathcal D_m\), we have
\[
\|\log D-\log E\|_\infty
\leq
\frac1{c_m}\|D-E\|_\infty
\]
and
\[
\|\log(1-D)-\log(1-E)\|_\infty
\leq
\frac1{c_m}\|D-E\|_\infty.
\]
Therefore, the maps
\[
D\longmapsto\log D,
\qquad
D\longmapsto\log(1-D)
\]
are continuous from \(\mathcal D_m\) into \(C(\mathcal X)\). Their
images of the compact set \(\mathcal D_m\) are consequently compact. That completes the proof.
\end{proof}

For \(P,Q\in\mathcal P(\mathcal X)\) and
\(D\in C(\mathcal X;(0,1))\), define
\[
\Phi_{P,Q}(D)
:=
\log 2
+
\frac12
\left[
\int_{\mathcal X}\log D\,dP
+
\int_{\mathcal X}\log(1-D)\,dQ
\right].
\]
Then, by definition,
\[
\widehat{\operatorname{JSD}}_m(P,Q)
=
\sup_{D\in\mathcal D_m}\Phi_{P,Q}(D).
\]
We write \(P_j\rightharpoonup P\) for weak convergence of probability
measures. The following proposition shows that, at every fixed
discriminator level \(m\), the map
\[
(P,Q)
\longmapsto
\widehat{\operatorname{JSD}}_m(P,Q)
\]
is continuous on
\(\mathcal P(\mathcal X)\times\mathcal P(\mathcal X)\) endowed with the
product weak topology.

\begin{proposition}[Fixed-level weak continuity of neural JSD]
\label{prop:fixed_level_neural_jsd_continuity}
Fix \(m\in\mathbb N\), and suppose that
\[
\{\log D:D\in\mathcal D_m\},
\qquad
\{\log(1-D):D\in\mathcal D_m\}
\]
are compact in \(C(\mathcal X)\). If
\(P_j,Q_j,P,Q\in\mathcal P(\mathcal X)\) satisfy $P_j\rightharpoonup P$ and $Q_j\rightharpoonup Q$, then
\[
\widehat{\operatorname{JSD}}_m(P_j,Q_j)
\longrightarrow
\widehat{\operatorname{JSD}}_m(P,Q).
\]
If, in addition, \(D\equiv1/2\in\mathcal D_m\), then
\[
P_j\rightharpoonup Q
\quad\Longrightarrow\quad
\widehat{\operatorname{JSD}}_m(P_j,Q)\longrightarrow0.
\]
\end{proposition}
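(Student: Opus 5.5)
The plan is to use compactness of the two logarithmic families to turn weak convergence into convergence that is uniform over the discriminator class. Then the convergence of the suprema follows from the elementary inequality $|\sup_D a_D - \sup_D b_D| \le \sup_D |a_D-b_D|$.

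Write
\[
\mathcal K_1:=\{\log D:D\in\mathcal D_m\},
\qquad
\mathcal K_2:=\{\log(1-D):D\in\mathcal D_m\}.
\]
For each $D\in\mathcal D_m$ we have
\[
\Phi_{P_j,Q_j}(D)-\Phi_{P,Q}(D)
=\tfrac12\Bigl[\int\log D\,d(P_j-P)+\int\log(1-D)\,d(Q_j-Q)\Bigr].
\]
Hence
\[
\bigl|\widehat{\operatorname{JSD}}_m(P_j,Q_j)-\widehat{\operatorname{JSD}}_m(P,Q)\bigr|
\le \tfrac12\sup_{h\in\mathcal K_1}\Bigl|\int h\,d(P_j-P)\Bigr|
+\tfrac12\sup_{h\in\mathcal K_2}\Bigl|\int h\,d(Q_j-Q)\Bigr|.
\]
Both suprema are finite because each $\Phi$ is bounded by $\log 2+\sup_{\mathcal K_1\cup\mathcal K_2}\|h\|_\infty<\infty$, using compactness. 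It therefore suffices to prove the following claim.

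\emph{Claim.} If $\mathcal K\subset C(\mathcal X)$ is compact and $P_j\rightharpoonup P$, then $\sup_{h\in\mathcal K}|\int h\,d(P_j-P)|\to0$.

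This is the key step, though it is routine. Fix $\varepsilon>0$. By compactness, choose a finite $\varepsilon$-net $h_1,\dots,h_N$ of $\mathcal K$ in the uniform norm. For any $h\in\mathcal K$, pick $h_i$ with $\|h-h_i\|_\infty<\varepsilon$. Since $P_j$ and $P$ are probability measures,
\[
\Bigl|\int h\,d(P_j-P)\Bigr|\le 2\varepsilon+\Bigl|\int h_i\,d(P_j-P)\Bigr|.
\]
Each $h_i$ is bounded and continuous on the compact set $\mathcal X$, so weak convergence gives $\int h_i\,dP_j\to\int h_i\,dP$. With only finitely many $i$, for all large $j$ the maximum over $i$ is below $\varepsilon$. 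Hence the supremum is eventually at most $3\varepsilon$, which proves the claim and the first assertion.

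For the second assertion, apply the first with $Q_j\equiv Q$ and $P_j\rightharpoonup Q$. This gives $\widehat{\operatorname{JSD}}_m(P_j,Q)\to\widehat{\operatorname{JSD}}_m(Q,Q)$, so it remains to show $\widehat{\operatorname{JSD}}_m(Q,Q)=0$. Its value is
\[
\log2+\tfrac12\int\log\bigl(D(1-D)\bigr)\,dQ .
\]
Since $D(1-D)\le 1/4$ pointwise, each term satisfies $\Phi_{Q,Q}(D)\le \log2+\tfrac12\log\tfrac14=0$. Equality holds at $D\equiv1/2\in\mathcal D_m$, so the supremum is exactly $0$.

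A notational remark: in the displayed definition just before the proposition, the roles of $P$ and $Q$ in $\Phi$ are swapped relative to \eqref{eq:neural_jsd_def}. The argument above is symmetric under this swap, because both logarithmic families are assumed compact. I expect no step to be a real obstacle. The only point needing care is the uniformity over the discriminator class, which the finite-net argument in the claim provides.
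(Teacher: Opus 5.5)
Your proposal is correct and takes essentially the same route as the paper. A finite $\delta$-net of each compact logarithmic family upgrades weak convergence to convergence uniform over $\mathcal D_m$, the inequality $|\sup F_j-\sup F|\le\sup|F_j-F|$ passes this to the suprema, and $D(1-D)\le 1/4$ with equality at $D\equiv 1/2$ gives $\widehat{\operatorname{JSD}}_m(Q,Q)=0$; your remark that the two displayed definitions of $\Phi$ swap the roles of $P$ and $Q$ is accurate and harmless, since both logarithmic families are assumed compact.
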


\begin{proof}
Let $\mathcal H_m^{(1)}:=\{\log D:D\in\mathcal D_m\}$ and $\mathcal H_m^{(0)}:=\{\log(1-D):D\in\mathcal D_m\}$. By Lemma~\ref{lem:fixed_discriminator_compactness},
\(\mathcal H_m^{(1)}\) is compact in
\((C(\mathcal X),\|\cdot\|_\infty)\). Therefore, there exits a finite \(\delta\)-net \(\{h_1,\ldots,h_N\}\) for \(\mathcal H_m^{(1)}\). It follows that
\[
\sup_{h\in\mathcal H_m^{(1)}}
\left|\int h\,d(P_j-P)\right|
\leq
\max_{1\leq i\leq N}
\left|\int h_i\,d(P_j-P)\right|
+2\delta.
\]
For each fixed \(\delta>0\), the functions
\(h_1,\ldots,h_N\) are fixed. Weak convergence and finiteness of the
net therefore imply
\[
\max_{1\leq i\leq N}
\left|\int h_i\,d(P_j-P)\right|
\longrightarrow0.
\]
Hence,
\[
\limsup_{j\to\infty}
\sup_{h\in\mathcal H_m^{(1)}}
\left|\int h\,d(P_j-P)\right|
\leq2\delta.
\]
Since \(\delta>0\) is arbitrary, letting \(\delta\downarrow0\) proves
the desired convergence. The same argument applied to \(\mathcal H_m^{(0)}\) gives
\[
\sup_{D\in\mathcal D_m}
\left|\int\log(1-D)\,d(Q_j-Q)\right|
\longrightarrow0.
\]
Consequently,
\[
\sup_{D\in\mathcal D_m}
|\Phi_{P_j,Q_j}(D)-\Phi_{P,Q}(D)|
\longrightarrow0.
\]
Using
\[
\left|\sup_D F_j(D)-\sup_D F(D)\right|
\leq
\sup_D|F_j(D)-F(D)|,
\]
we obtain the first conclusion. For every \(D\in\mathcal D_m\),
\[
\Phi_{Q,Q}(D)
=
\log 2
+
\frac12\int_{\mathcal X}\log(D(1-D))\,dQ
\leq0,
\]
because \(D(1-D)\leq1/4\). The discriminator \(D\equiv1/2\) attains
zero, and therefore
\[
\widehat{\operatorname{JSD}}_m(Q,Q)=0.
\]
The second conclusion follows from the first.
\end{proof}

We note here that the fixed-level qualification is essential. The proposition does not assert
uniform convergence as \(m\to\infty\).

\subsection{Recovery of the forward-backward Brenier pair}
\label{subsec:ot_recovery}

\paragraph{Neural approximation properties used below.}
The preceding results yield the following two properties.

\begin{enumerate}
\item[\textnormal{(G)}]
Since \(\mu_0\) and \(\mu_1\) are absolutely continuous, the forward
and reverse quadratic Brenier maps are almost-everywhere inverses. The range-preserving generator construction and
Lemma~\ref{lem:joint_cycle_approximation} therefore give a sequence
\[
(T_j,S_j)\in\mathcal F\times\mathcal G
\]
such that
\[
\|T_j-T^\star\|_{L^2(\mu_0)}
+
\|S_j-S^\star\|_{L^2(\mu_1)}
\longrightarrow0,
\qquad
\mathcal C(T_j,S_j)\longrightarrow0.
\]

\item[\textnormal{(D)}]
The discriminator sequence satisfies $\mathcal D_1\subseteq\mathcal D_2\subseteq\cdots$ and $D\equiv1/2\in\mathcal D_m$ for every fixed \(m\), both log-discriminator families are compact in
\(C(\mathcal X)\), and
\[
\overline{\bigcup_{m\geq1}\mathcal D_m}
=
C(\mathcal X;(0,1))
\]
in the relative uniform-norm topology.
\end{enumerate}
Property \textnormal{(G)} provides a Brenier map approximation whose action
converges to \(W^2\) and whose cycle loss vanishes. Property
\textnormal{(D)} provides fixed-level weak continuity and, through nestedness and density, uniform separation of distinct measures. No uniform approximation of true JSD and no simultaneous generator-discriminator diagonal are required.

For clarity, the neural population objective used below is
\[
\widehat{\mathcal B}_m(f,g)
:=
\widehat{\operatorname{JSD}}_m(f_\#\mu_0,\mu_1)
+
\widehat{\operatorname{JSD}}_m(g_\#\mu_1,\mu_0),
\]
\[
\mathcal E_{\lambda,m}(f,g)
:=
\lambda\mathcal A(f,g)
+
\widehat{\mathcal B}_m(f,g)
+
\mathcal C(f,g).
\]

In the following theorem, the phrase ``sufficiently expressive such that
universal approximation holds'' refers precisely to the concrete neural
approximation constructions above and hence suffices properties \textnormal{(G)} and
\textnormal{(D)}.

\begin{theorem}[OT Recovery under Neural JSD and Discriminator Annealing]
\label{thm:neural_jsd_ot_recovery}
Let \(\mathcal X\subset\mathbb R^d\) be a nonempty compact convex set with
nonempty interior, and let
\(\mu_0,\mu_1\in\mathcal P_{2,ac}(\mathcal X)\). Let
\(T^\star,S^\star:\mathcal X\to\mathcal X\) be the almost-everywhere unique
quadratic Brenier maps satisfying
\[
(T^\star)_\#\mu_0=\mu_1,
\qquad
(S^\star)_\#\mu_1=\mu_0,
\]
and set $W:=W_2(\mu_0,\mu_1)>0$. Assume that the generator classes \(\mathcal F,\mathcal G\) and the discriminator class $\mathcal{D} := \bigcup_{m\geq1}\mathcal D_m$ are sufficiently expressive such that property \textnormal{(G)} and \textnormal{(D)} are satisfied. For each \(m\in\mathbb N\) and \(\lambda>0\), assume that the minimum is attained, and let
\[
(f_{\lambda,m},g_{\lambda,m})
\in
\operatorname*{arg\,min}_{f\in\mathcal F,\;g\in\mathcal G}
\mathcal E_{\lambda,m}(f,g).
\]
Then, for every \(\varepsilon>0\), there exist
\(M_\varepsilon\in\mathbb N\) and \(\alpha_\varepsilon>0\) such that,
whenever
\[
m\geq M_\varepsilon
\qquad\text{and}\qquad
0<\lambda\leq\frac{\alpha_\varepsilon}{W^2},
\]
one has
\[
\|f_{\lambda,m}-T^\star\|_{L^2(\mu_0)}
+
\|g_{\lambda,m}-S^\star\|_{L^2(\mu_1)}
<\varepsilon,
\qquad
\mathcal C(f_{\lambda,m},g_{\lambda,m})
\leq\lambda W^2.
\]
\end{theorem}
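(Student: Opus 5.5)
The plan is to first prove the cycle bound using the comparison sequence from (G), and then get the $L^2$ recovery by a compactness-and-contradiction argument.

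\textbf{Step 1: upper bound on the minimal energy.} Fix $m$ and $\lambda$, and let $(T_j,S_j)$ be the sequence from property (G).
\begin{itemize}
\item $L^2$ convergence of the maps gives weak convergence of the pushforwards, $(T_j)_\#\mu_0\rightharpoonup(T^\star)_\#\mu_0=\mu_1$ and $(S_j)_\#\mu_1\rightharpoonup\mu_0$. By Proposition~\ref{prop:fixed_level_neural_jsd_continuity} (with Lemma~\ref{lem:fixed_discriminator_compactness}) this yields $\widehat{\mathcal B}_m(T_j,S_j)\to0$.
\item Again by $L^2$ convergence, $\mathcal A(T_j,S_j)\to\mathcal A(T^\star,S^\star)=W^2$ via \eqref{eq:bidirectional_brenier_action}.
\item By (G), $\mathcal C(T_j,S_j)\to0$.
\end{itemize}
Hence $\mathcal E_{\lambda,m}(f_{\lambda,m},g_{\lambda,m})\le\inf\mathcal E_{\lambda,m}\le\lambda W^2$. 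Since all three terms are nonnegative, this gives simultaneously
\[
\mathcal C(f_{\lambda,m},g_{\lambda,m})\le\lambda W^2,\qquad \widehat{\mathcal B}_m(f_{\lambda,m},g_{\lambda,m})\le\lambda W^2,\qquad \mathcal A(f_{\lambda,m},g_{\lambda,m})\le W^2 .
\]
The first inequality is the cycle claim, and it holds for every $m$ and $\lambda$.

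\textbf{Step 2: contradiction setup.} Suppose the recovery claim fails for some $\varepsilon$. Taking $M=k$ and $\alpha=W^2/k$, we obtain $m_k\ge k$ and $\lambda_k\le 1/k$ whose minimizers $(f_k,g_k)$ have error at least $\varepsilon$. Consider the couplings
\[
\gamma_k:=(\mathrm{Id},f_k)_\#\mu_0,\qquad \eta_k:=(g_k,\mathrm{Id})_\#\mu_1 .
\]
They live on the compact set $\mathcal X\times\mathcal X$, so by Prokhorov a subsequence satisfies $\gamma_k\rightharpoonup\gamma$ and $\eta_k\rightharpoonup\eta$. The first marginal of $\gamma$ is $\mu_0$ and the second marginal of $\eta$ is $\mu_1$.

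\textbf{Step 3: identify the missing marginals (main obstacle).} I expect this step to be the hardest, since it must use (D) without any uniform-in-$m$ continuity. Let $P$ be the second marginal of $\gamma$.
\begin{itemize}
\item For fixed $r$ and $k$ with $m_k\ge r$, nestedness gives $\widehat{\operatorname{JSD}}_r(f_{k\#}\mu_0,\mu_1)\le\widehat{\operatorname{JSD}}_{m_k}(f_{k\#}\mu_0,\mu_1)\le\lambda_kW^2\to0$.
\item By the fixed-level continuity of Proposition~\ref{prop:fixed_level_neural_jsd_continuity}, $\widehat{\operatorname{JSD}}_r(P,\mu_1)=0$ for every $r$.
\item Suppose $P\ne\mu_1$. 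Then there is $h\in C(\mathcal X)$ with $\int h\,d\mu_1>\int h\,dP$. The perturbation $D=\tfrac12+\eta h$, for small $\eta>0$, satisfies $\Phi>0$: the first-order term is $\eta\bigl(\int h\,d\mu_1-\int h\,dP\bigr)$ (target-positive convention, as in \eqref{eq:neural_jsd_def}), and the remainder is $O(\eta^2)$.
\item By density in (D) and continuity of $\Phi$ in the uniform norm near this $D$ (which stays bounded away from $0,1$), some $D'\in\mathcal D_r$ has $\Phi(D')>0$. This contradicts $\widehat{\operatorname{JSD}}_r(P,\mu_1)=0$.
\end{itemize}
So $P=\mu_1$, i.e.\ $\gamma\in\Pi(\mu_0,\mu_1)$. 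Symmetrically, $\eta\in\Pi(\mu_0,\mu_1)$.

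\textbf{Step 4: optimality of the limits and $L^2$ stability.}
\begin{itemize}
\item Weak convergence on a compact set gives
\[
\lim_k \tfrac12\!\int\|y-x\|^2d\gamma_k=\tfrac12\!\int\|y-x\|^2d\gamma\ \ge\ \tfrac12 W^2,
\]
and the same for $\eta$. Combined with $\mathcal A(f_k,g_k)\le W^2$, both $\gamma$ and $\eta$ are optimal plans, and each half of the action converges to $W^2/2$.
\item By Brenier uniqueness, $\gamma=(\mathrm{Id},T^\star)_\#\mu_0$ and $\eta=(S^\star,\mathrm{Id})_\#\mu_1$.
\item To upgrade to $L^2$, choose $\psi\in C(\mathcal X;\mathcal X)$ with $\|\psi-T^\star\|_{L^2(\mu_0)}<\delta$. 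Since $\|y-\psi(x)\|^2$ is continuous and bounded on the compact set,
\[
\|f_k-\psi\|_{L^2(\mu_0)}^2=\int\|y-\psi(x)\|^2d\gamma_k\;\longrightarrow\;\int\|T^\star-\psi\|^2d\mu_0<\delta^2 .
\]
The triangle inequality then gives $\limsup_k\|f_k-T^\star\|_{L^2(\mu_0)}\le2\delta$.
\end{itemize}
The same argument applies to $g_k$. Letting $\delta\downarrow0$ contradicts the error lower bound $\varepsilon$ from Step 2. This gives $M_\varepsilon$ and $\alpha_\varepsilon$, and the $W^2$ scaling of the $\lambda$-threshold is only a normalization of $\alpha_\varepsilon$.
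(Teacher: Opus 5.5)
Your proof is correct, but it is organized differently from the paper's.

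\textbf{The paper's route.} It proves two free-standing $\varepsilon$--$\delta$ lemmas by separate compactness arguments:
\begin{itemize}
\item a uniform separation property: for $m$ large, a small level-$m$ neural JSD forces $W_2(P,Q)<a$. This is built on the same identification-by-perturbation argument with $\tfrac12+t\varphi$ that you use.
\item a stability lemma for the Brenier map: approximate marginal feasibility plus near-optimal \emph{one-sided} action forces $L^2$-closeness.
\end{itemize}
It then assembles these in a separate final step. The benchmark only bounds the sum $I_f+I_g\le 2W^2$, so that step has to split the budget. It uses the graph coupling and the triangle inequality to get $I_g^{1/2}>W-a_\varepsilon$, hence $I_f<W^2+2Wa_\varepsilon$. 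This is where the choice $a_\varepsilon\le\delta_\varepsilon/(2W)$, and hence $W>0$, enters.

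\textbf{Your route.} You run a single contradiction directly on a sequence of minimizers with $m_k\to\infty$ and $\lambda_k\to0$, and take limits of both graph couplings at once. The splitting then becomes automatic: each limit coupling lies in $\Pi(\mu_0,\mu_1)$, so each limiting half-action is at least $W^2/2$, while their sum is at most $W^2$.

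\textbf{What each buys.} Your argument is shorter, treats both directions symmetrically, and never divides by $W$. The paper's modular version buys reusable intermediate results. The separation property is reused for the neural-JSD mode-coverage proposition and for the discussion calibrating $\lambda W^2$ against the thresholds $\beta_{\mu_i,a}$. It also gives a transparent description of how $M_\varepsilon$ and $\alpha_\varepsilon$ arise from the separation and stability constants, though those constants are themselves obtained non-constructively. Your argument produces $M_\varepsilon$ and $\alpha_\varepsilon$ only implicitly.

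\textbf{One wording fix.} In your Step 3, density yields $D'$ in some level $\mathcal D_{r'}$, not necessarily in $\mathcal D_r$. This is harmless, because you have already shown that the neural JSD of $(P,\mu_1)$ vanishes at every level.
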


\begin{proof}
We divide the proof into four steps.

\paragraph{Step 1: Brenier approximation benchmark.}
By property \textnormal{(G)} which follows from Lemma~\ref{lem:joint_cycle_approximation}, there exists
\((T_j,S_j)\in\mathcal F\times\mathcal G\) such that
\[
T_j\to T^\star\quad\text{in }L^2(\mu_0),
\qquad
S_j\to S^\star\quad\text{in }L^2(\mu_1),
\]
and \(\mathcal C(T_j,S_j)\to0\). The couplings
\((T_j,T^\star)_\#\mu_0\) and \((S_j,S^\star)_\#\mu_1\) give
\[
W_2((T_j)_\#\mu_0,\mu_1)
\leq
\|T_j-T^\star\|_{L^2(\mu_0)}
\longrightarrow0
\]
and
\[
W_2((S_j)_\#\mu_1,\mu_0)
\leq
\|S_j-S^\star\|_{L^2(\mu_1)}
\longrightarrow0.
\]
Therefore, for every fixed \(m\),
Proposition~\ref{prop:fixed_level_neural_jsd_continuity} gives
\begin{equation}
\widehat{\mathcal B}_m(T_j,S_j)\longrightarrow0.
\label{eq:fixed_m_generator_neural_loss}
\end{equation}
Here, \(m\) is held fixed while \(j\to\infty\).

The same \(L^2\)-convergence gives
\[
\mathcal A(T_j,S_j)\longrightarrow W^2.
\]
Indeed,
\[
\begin{aligned}
&\left|
\|T_j-\operatorname{Id}\|_{L^2(\mu_0)}^2
-
\|T^\star-\operatorname{Id}\|_{L^2(\mu_0)}^2
\right|\\
&\qquad\leq
\left(
\|T_j-\operatorname{Id}\|_{L^2(\mu_0)}
+
\|T^\star-\operatorname{Id}\|_{L^2(\mu_0)}
\right)
\|T_j-T^\star\|_{L^2(\mu_0)}
\longrightarrow0.
\end{aligned}
\]
and similarly for \(S_j\). Moreover, \(\mathcal A(T^\star,S^\star)=W^2\). Hence, we have $$\mathcal A(T_j,S_j)\longrightarrow \mathcal A(T^\star,S^\star) = W^2$$ by definition of $\mathcal A$.

Now, fix \(m\) and \(\lambda>0\). Global optimality of $(f_{\lambda,m},g_{\lambda,m})$ gives
\[
\mathcal E_{\lambda,m}(f_{\lambda,m},g_{\lambda,m})
\leq
\mathcal E_{\lambda,m}(T_j,S_j)
\]
for every \(j\). Letting \(j\to\infty\) yields
\begin{equation}
\mathcal E_{\lambda,m}(f_{\lambda,m},g_{\lambda,m})
\leq
\lambda W^2.
\label{eq:neural_population_benchmark}
\end{equation}
Because \(D\equiv1/2\in\mathcal D_m\), neural JSD is nonnegative. All
three terms in \(\mathcal E_{\lambda,m}\) are therefore nonnegative, and
\eqref{eq:neural_population_benchmark} implies
\begin{equation}
\mathcal A(f_{\lambda,m},g_{\lambda,m})\leq W^2,
\label{eq:neural_action_bound}
\end{equation}
\begin{equation}
\mathcal C(f_{\lambda,m},g_{\lambda,m})\leq\lambda W^2,
\label{eq:neural_cycle_bound}
\end{equation}
and
\begin{equation}
\widehat{\operatorname{JSD}}_m
((f_{\lambda,m})_\#\mu_0,\mu_1)
\leq\lambda W^2,
\label{eq:forward_neural_jsd_bound}
\end{equation}
\begin{equation}
\widehat{\operatorname{JSD}}_m
((g_{\lambda,m})_\#\mu_1,\mu_0)
\leq\lambda W^2.
\label{eq:reverse_neural_jsd_bound}
\end{equation}

\paragraph{Step 2: Identification and uniform separation.}
For \(P,Q\in\mathcal P(\mathcal X)\), we first show that
\begin{equation}
\left[
\widehat{\operatorname{JSD}}_r(P,Q)=0
\text{ for every }r\geq1
\right]
\quad\Longrightarrow\quad
P=Q.
\label{eq:neural_identification}
\end{equation}
Suppose that the quantities on the left vanish. Then
\[
\Phi_{P,Q}(D)\leq0
\qquad
\text{for every }D\in\bigcup_{r\geq1}\mathcal D_r.
\]
Fix \(\varphi\in C(\mathcal X)\). For all sufficiently small positive and
negative \(t\),
\[
D_t:=\frac12+t\varphi
\]
belongs to \(C(\mathcal X;(0,1))\). By density, choose
\(D_{t,k}\in\bigcup_r\mathcal D_r\) with
\(D_{t,k}\to D_t\) uniformly. Since \(D_t\) is bounded away from zero and
one,
\[
\log D_{t,k}\to\log D_t,
\qquad
\log(1-D_{t,k})\to\log(1-D_t)
\]
uniformly. Therefore
\[
\Phi_{P,Q}(D_t)\leq0.
\]
At \(t=0\), \(D_0\equiv1/2\) and \(\Phi_{P,Q}(D_0)=0\). Hence \(t=0\) is
a local maximum of \(t\mapsto\Phi_{P,Q}(D_t)\), and
\[
0
=
\left.\frac{d}{dt}\Phi_{P,Q}(D_t)\right|_{t=0}
=
\int_{\mathcal X}\varphi\,dP
-
\int_{\mathcal X}\varphi\,dQ.
\]
Since this holds for every \(\varphi\in C(\mathcal X)\), the Riesz
representation theorem gives \(P=Q\), proving
\eqref{eq:neural_identification}.

We next establish uniform separation. That is, for every
\(Q\in\mathcal P(\mathcal X)\) and \(a>0\), there exist
\(M_{Q,a}\in\mathbb N\) and \(\beta_{Q,a}>0\) such that
\begin{equation}
m\geq M_{Q,a},
\qquad
\widehat{\operatorname{JSD}}_m(P,Q)\leq\beta_{Q,a}
\quad\Longrightarrow\quad
W_2(P,Q)<a
\label{eq:neural_w2_separation}
\end{equation}
for every \(P\in\mathcal P(\mathcal X)\).

We prove by contradiction. Assume this is false, choosing the discriminator threshold \(1/n\) and the
minimum level \(n\) would give \(m_n\geq n\) and
\(P_n\in\mathcal P(\mathcal X)\) such that
\[
W_2(P_n,Q)\geq a,
\qquad
\widehat{\operatorname{JSD}}_{m_n}(P_n,Q)\leq\frac1n.
\]
Because \(\mathcal X\) is compact, the space
\(\mathcal P(\mathcal X)\) is compact under weak convergence. Hence,
after passing to a subsequence, which we do not relabel, there exists
\(P\in\mathcal P(\mathcal X)\) such that
\[
P_n\rightharpoonup P.
\]
On the compact space \(\mathcal X\), weak convergence implies
\(W_2\)-convergence. Therefore,
\[
W_2(P,Q)
=
\lim_{n\to\infty}W_2(P_n,Q)
\geq a.
\]

Now, fix \(r\in\mathbb N\). Eventually \(m_n\geq r\), so the set inclusion nestedness gives
\[
0
\leq
\widehat{\operatorname{JSD}}_r(P_n,Q)
\leq
\widehat{\operatorname{JSD}}_{m_n}(P_n,Q)
\leq
\frac1n.
\]
Also, for fixed \(r\), the map
\[
P'\mapsto\widehat{\operatorname{JSD}}_r(P',Q)
\]
is lower semicontinuous under weak convergence, because it is a supremum of weakly continuous functions of \(P'\). Hence,
\[
0
\leq
\widehat{\operatorname{JSD}}_r(P,Q)
\leq
\liminf_{n\to\infty}
\widehat{\operatorname{JSD}}_r(P_n,Q)
=0.
\]
Thus, the level-\(r\) value vanishes for every \(r\). By
\eqref{eq:neural_identification}, \(P=Q\), which contradicts
\(W_2(P,Q)\geq a\). This proves \eqref{eq:neural_w2_separation}.

\paragraph{Step 3: Stability of the quadratic Brenier map.}
Let $\mu\in\mathcal P_{2,ac}(\mathcal X)$, $\nu\in\mathcal P(\mathcal X)$, and \(T\) be the quadratic Brenier map from \(\mu\) to \(\nu\). Write $W_{\mu,\nu}:=W_2(\mu,\nu)$. We claim that, for every \(\tau>0\), there exist
\(\eta_\tau,\delta_\tau>0\) such that every measurable
\(h:\mathcal X\to\mathcal X\) satisfying
\[
W_2(h_\#\mu,\nu)\leq\eta_\tau
\]
and
\[
\int_{\mathcal X}\|h(x)-x\|^2\,d\mu(x)
\leq
W_{\mu,\nu}^2+\delta_\tau
\]
also satisfies
\[
\|h-T\|_{L^2(\mu)}<\tau.
\]

Suppose otherwise. Then, for some \(\tau_0>0\), there would exist measurable
maps \(h_n:\mathcal X\to\mathcal X\) such that
\[
W_2((h_n)_\#\mu,\nu)\leq\frac1n,
\]
\[
I_n
:=
\int_{\mathcal X}\|h_n(x)-x\|^2\,d\mu(x)
\leq
W_{\mu,\nu}^2+\frac1n,
\]
but
\[
\|h_n-T\|_{L^2(\mu)}\geq\tau_0.
\]
The graph coupling induced by \(h_n\) gives
\[
W_2(\mu,(h_n)_\#\mu)\leq I_n^{1/2}.
\]
It therefore follows from the triangle inequality that
\[
W_{\mu,\nu}\leq I_n^{1/2}+\frac1n.
\]
Together with the upper bound on \(I_n\), this implies
\[
I_n\longrightarrow W_{\mu,\nu}^2.
\]

Define
\[
\gamma_n:=(\operatorname{Id},h_n)_\#\mu.
\]
After passing to a subsequence, \(\gamma_n\rightharpoonup\gamma\) in
\(\mathcal P(\mathcal X\times\mathcal X)\). Its first marginal is \(\mu\),
and its second marginal is \(\nu\) because
\((h_n)_\#\mu\to\nu\) in \(W_2\). Continuity of the quadratic cost on the
compact product gives
\[
\int_{\mathcal X\times\mathcal X}\|x-y\|^2\,d\gamma(x,y)
=
\lim_{n\to\infty}I_n
=
W_{\mu,\nu}^2.
\]
Thus, \(\gamma\) is an optimal quadratic coupling between
\(\mu\) and \(\nu\). Since \(\mu\) is absolutely continuous, the
optimal quadratic coupling is unique and is induced by the Brenier map
\(T\), which is unique \(\mu\)-almost everywhere. Therefore,
\[
\gamma=(\operatorname{Id},T)_\#\mu.
\]

Fix \(s>0\) and choose
\(T_s\in C(\mathcal X;\mathbb R^d)\) with
\(\|T_s-T\|_{L^2(\mu)}<s\). Since
\((x,y)\mapsto\|y-T_s(x)\|^2\) is bounded and continuous, let $n \rightarrow \infty$, we have
\[
\int_{\mathcal X}\|h_n(x)-T_s(x)\|^2\,d\mu(x)
\longrightarrow
\int_{\mathcal X}\|T(x)-T_s(x)\|^2\,d\mu(x)
<s^2.
\]
Moreover,
\[
\|h_n-T\|_{L^2(\mu)}^2
\leq
2\|h_n-T_s\|_{L^2(\mu)}^2
+
2\|T_s-T\|_{L^2(\mu)}^2.
\]
Consequently,
\[
\limsup_{n\to\infty}\|h_n-T\|_{L^2(\mu)}^2
\leq4s^2.
\]
Letting \(s\downarrow0\) contradicts
\(\|h_n-T\|_{L^2(\mu)}\geq\tau_0\). This proves our stability claim.

\paragraph{Step 4: Forward and reverse recovery.}
Apply Step~3 with \(\tau=\varepsilon/2\) to
\[
(\mu,\nu,T)=(\mu_0,\mu_1,T^\star)
\]
and
\[
(\mu,\nu,T)=(\mu_1,\mu_0,S^\star).
\]
Let the resulting constants be
\[
\eta_\varepsilon^{\mathrm f},\delta_\varepsilon^{\mathrm f},
\qquad
\eta_\varepsilon^{\mathrm g},\delta_\varepsilon^{\mathrm g},
\]
and define
\[
a_\varepsilon
:=
\min\left\{
\eta_\varepsilon^{\mathrm f},
\eta_\varepsilon^{\mathrm g},
\frac{\delta_\varepsilon^{\mathrm f}}{2W},
\frac{\delta_\varepsilon^{\mathrm g}}{2W},
\frac W2
\right\}.
\]
Apply \eqref{eq:neural_w2_separation} with
\((Q,a)=(\mu_1,a_\varepsilon)\) and
\((Q,a)=(\mu_0,a_\varepsilon)\). Denote the resulting constants by
\[
M_\varepsilon^{\mathrm f},\beta_\varepsilon^{\mathrm f},
\qquad
M_\varepsilon^{\mathrm g},\beta_\varepsilon^{\mathrm g},
\]
and set
\[
M_\varepsilon
:=
\max\{M_\varepsilon^{\mathrm f},M_\varepsilon^{\mathrm g}\},
\qquad
\alpha_\varepsilon
:=
\min\{\beta_\varepsilon^{\mathrm f},
       \beta_\varepsilon^{\mathrm g}\}.
\]

Suppose
\[
m\geq M_\varepsilon,
\qquad
0<\lambda\leq\frac{\alpha_\varepsilon}{W^2}.
\]
The bounds \eqref{eq:forward_neural_jsd_bound} and
\eqref{eq:reverse_neural_jsd_bound}, followed by uniform separation, give
\[
W_2((f_{\lambda,m})_\#\mu_0,\mu_1)<a_\varepsilon,
\qquad
W_2((g_{\lambda,m})_\#\mu_1,\mu_0)<a_\varepsilon.
\]

Write
\[
I_f
:=
\int_{\mathcal X}\|f_{\lambda,m}(x)-x\|^2\,d\mu_0(x),
\qquad
I_g
:=
\int_{\mathcal X}\|g_{\lambda,m}(y)-y\|^2\,d\mu_1(y).
\]
By the action bound \eqref{eq:neural_action_bound} established in
Step~1 and the definition of the bidirectional action,
\[
\frac12(I_f+I_g)
=
\mathcal A(f_{\lambda,m},g_{\lambda,m})
\leq W^2.
\]
Therefore,
\[
I_f+I_g\leq2W^2.
\]
Now, let $\nu_g:=(g_{\lambda,m})_\#\mu_1$. The graph coupling
\((\operatorname{Id},g_{\lambda,m})_\#\mu_1\) gives
\[
W_2(\mu_1,\nu_g)\leq I_g^{1/2}.
\]
Moreover, the reverse triangle inequality for \(W_2\) gives
\[
W_2(\mu_1,\nu_g)
\geq
W_2(\mu_1,\mu_0)-W_2(\nu_g,\mu_0).
\]
Consequently,
\[
I_g^{1/2}
\geq
W-W_2((g_{\lambda,m})_\#\mu_1,\mu_0)
>
W-a_\varepsilon.
\]
Therefore,
\[
\begin{aligned}
I_f
&\leq2W^2-I_g <2W^2-(W-a_\varepsilon)^2 =W^2+2Wa_\varepsilon-a_\varepsilon^2 \leq W^2+\delta_\varepsilon^{\mathrm f}.
\end{aligned}
\]
The symmetric argument gives
\[
I_g<W^2+\delta_\varepsilon^{\mathrm g}.
\]
Since
\(a_\varepsilon\leq\eta_\varepsilon^{\mathrm f}\) and
\(a_\varepsilon\leq\eta_\varepsilon^{\mathrm g}\), it follows from Step~3 that:
\[
\|f_{\lambda,m}-T^\star\|_{L^2(\mu_0)}<\frac\varepsilon2,
\qquad
\|g_{\lambda,m}-S^\star\|_{L^2(\mu_1)}<\frac\varepsilon2.
\]
Adding these inequalities proves the map-recovery conclusion, while
\eqref{eq:neural_cycle_bound} gives
\[
\mathcal C(f_{\lambda,m},g_{\lambda,m})\leq\lambda W^2.
\]
\end{proof}

\subsection{Discriminator refinement \& action-weight annealing}
\label{subsec:annealing_interpretation}

The transport weight \(\lambda\) balances the transportation cost against marginal
matching and cycle consistency. The theoretical estimate gives
\[
\widehat{\mathcal B}_m(f_{\lambda,m},g_{\lambda,m})
+
\mathcal C(f_{\lambda,m},g_{\lambda,m})
\leq
\lambda W^2,
\qquad
\mathcal A(f_{\lambda,m},g_{\lambda,m})\leq W^2,
\]
with increasing discriminator class indexed by $m$. Thus, for consistent implementation, annealing \(\lambda\) toward zero is essential; together with a sufficiently expressive discriminator level, it forces marginal mismatch to vanish to mimic the increasing discriminator class. At the same time, \(\lambda\) must remain strictly positive. Even when its coefficient is small, the action term retains the optimal-transport selection mechanism among approximately feasible transports. Finally, at the end of annealing, the matching has already been forged during the previous training dynamics when the actions scale is still large. Therefore, although the final training epochs should have nearly zero action terms and focuses on distribution matching, the perturbation on the already forged matching remains relatively small.

The proof also relates the annealing scale to the required accuracy of
marginal enforcement. For a prescribed tolerance \(a>0\), the separation
property provides discriminator levels and thresholds
\[
M_{\mu_1,a},\ \beta_{\mu_1,a},
\qquad
M_{\mu_0,a},\ \beta_{\mu_0,a}.
\]
Consequently, if
\[
m\geq
\max\{M_{\mu_1,a},M_{\mu_0,a}\},
\qquad
\lambda W^2
\leq
\min\{\beta_{\mu_1,a},\beta_{\mu_0,a}\},
\]
then both generated marginals lie within \(W_2\)-distance \(a\) of their
targets. Hence \(\lambda W^2\) acts as a feasibility-error budget, while the
quantities \(\beta_{\mu_i,a}\) describe the resolution of the discriminator
family at the desired marginal scale. The theorem uses this relation with
\(a=a_\varepsilon\). This calibration is qualitative: an explicit annealing
rate would require quantitative bounds on the discriminator separation
modulus.

This suggests a continuation interpretation. A larger initial value of
\(\lambda\) imposes a stronger low-displacement geometric bias. Gradually
reducing \(\lambda\) then transfers emphasis to marginal matching and cycle consistency, while a residual positive action weight preserves the
optimal-transport selection principle. The theorem establishes this
small-positive-\(\lambda\) behavior for global minimizers; it does not assert that beginning with a large \(\lambda\), or any particular annealing schedule, is necessary for optimization, nor does it analyze local minima.

\section{Structural Roles of Endpoint Matching, Cycle Consistency, \& Action}
\label{sec:theory}

Section~\ref{sec:global_recovery} establishes global recovery of the
forward--reverse Brenier pair. We now examine the structural mechanisms
behind this result. The analysis addresses three questions: how cycle
consistency reduces the class of admissible transport maps, how the proposed
penalties control different forms of mode collapse, and how the local
competition between quadratic action and cycle consistency disfavors
suboptimal inverse-compatible solutions.

The discussion has three parts. First, endpoint matching alone leaves a large
class of admissible forward--backward map pairs. Cycle consistency narrows
this class by requiring the two maps to be approximately inverse to one
another. This reduction is substantial, although it does not by itself
identify the Brenier pair, because many inverse-compatible maps can transport
the prescribed endpoint marginals.

Second, we distinguish two forms of mode collapse. Distribution-level
collapse occurs when a generated endpoint distribution fails to cover a
region carrying nonnegligible target mass. After sufficient discriminator
refinement, the neural-JSD term controls this failure at any prescribed
positive spatial scale. Map-level collapse occurs when many distinct inputs
are sent to the same output. The cycle loss controls this second failure by
bounding the conditional dispersion of inputs associated with a common
generated output, as well as the corresponding dispersion of the induced
trajectories.

Third, we study the local competition between quadratic action and cycle
consistency near an inverse-compatible pair. Along a smooth
endpoint-feasible perturbation that decreases action, the action improvement
is generally first order in the perturbation size, whereas the cycle error
created by incompatibility of the forward and backward perturbations is
second order. Thus, the action term favors motion toward a lower-cost
transport, while the cycle term resists departures from inverse
compatibility. In particular, a suboptimal inverse-compatible pair admitting
such an action-decreasing feasible perturbation cannot be a local minimizer
of the population objective. At the Brenier pair, by contrast, no
endpoint-feasible perturbation can further reduce the quadratic action. This
provides a local structural interpretation of how the two terms jointly
favor the Brenier pair; it is not, by itself, a convergence guarantee for
gradient-based training.

Throughout this section, let
\(\mathcal X\subset\mathbb R^d\) be compact and convex, let
\(\mu_0,\mu_1\in\mathcal P_{2,ac}(\mathcal X)\), and let
\((T^\star,S^\star)\) denote the forward--reverse quadratic Brenier pair.
We use the population objective introduced in
Section~\ref{sec:proposed_framework}:
\[
\mathcal E_{\lambda,m}(f,g)
=
\lambda\mathcal A(f,g)
+
\widehat{\mathcal B}_m(f,g)
+
\mathcal C(f,g),
\]
where
\[
\widehat{\mathcal B}_m(f,g)
=
\widehat{\operatorname{JSD}}_m(f_\#\mu_0,\mu_1)
+
\widehat{\operatorname{JSD}}_m(g_\#\mu_1,\mu_0).
\]
The three terms therefore play complementary roles:
\begin{itemize}[leftmargin = *]
    \item Cycle consistency narrows the admissible map class by promoting
    approximate forward--backward invertibility and controlling many-to-one
    map collapse.
    \item Neural-JSD endpoint matching controls discrepancies between the
    generated and target endpoint distributions after sufficient
    discriminator refinement.
    \item Quadratic action favors lower-cost transports and, in competition
    with the cycle penalty, provides the minimum-action selection mechanism
    leading to the forward--reverse Brenier pair.
\end{itemize}

At a fixed discriminator level \(m\), vanishing neural JSD need not imply
exact equality of the corresponding measures. Consequently, exact endpoint
matching is used below only to describe the ideal limiting feasible class.
Its connection to the neural objective is provided by the identification and
uniform Wasserstein-separation properties established in
Section~\ref{sec:global_recovery}.

\subsection{Feasible-class reduction via cycle consistency}
\label{subsec:theory_q1}

Define the exact endpoint-matching class
\[
\mathcal M_{\mathrm{match}}
:=
\left\{
(f,g):
f_\#\mu_0=\mu_1,
\quad
g_\#\mu_1=\mu_0
\right\}.
\]
Under the discriminator assumptions of
Section~\ref{sec:global_recovery}, this limiting feasible class can also be
characterized through the full discriminator sequence:
\[
(f,g)\in\mathcal M_{\mathrm{match}}
\quad\Longleftrightarrow\quad
\widehat{\mathcal B}_r(f,g)=0
\quad\text{for every }r\geq1.
\]
In general, however, vanishing neural JSD at one fixed discriminator level
does not imply exact endpoint matching.

Endpoint matching alone leaves a large class of admissible map pairs. More
strongly, even exact cycle consistency does not uniquely determine the
Brenier pair. To see this, let
\(R:\mathcal X\to\mathcal X\) be a measurable
\(\mu_0\)-preserving bijection with a measurable inverse, where these
properties are understood up to \(\mu_0\)-null sets, and define
\[
f_R:=T^\star\circ R,
\qquad
g_R:=R^{-1}\circ S^\star.
\]
Then
\[
(f_R)_\#\mu_0=\mu_1,
\qquad
(g_R)_\#\mu_1=\mu_0.
\]
Moreover, because \(T^\star\) and \(S^\star\) are almost-everywhere
inverses,
\[
g_R\circ f_R=\operatorname{Id}
\quad\mu_0\text{-a.e.},
\qquad
f_R\circ g_R=\operatorname{Id}
\quad\mu_1\text{-a.e.}
\]
Thus, endpoint matching together with exact cycle consistency still leaves
a potentially large class of inverse-compatible transport pairs.

We next quantify the restriction imposed by a small cycle loss. For
\(\varepsilon>0\), define the two-sided reconstruction-failure probability
\[
p_\varepsilon(f,g)
:=
\mathbb P_{X\sim\mu_0}
\bigl(\|g(f(X))-X\|>\varepsilon\bigr)
+
\mathbb P_{Y\sim\mu_1}
\bigl(\|f(g(Y))-Y\|>\varepsilon\bigr).
\]

\begin{proposition}[Approximate invertibility at resolution \(\varepsilon\)]
\label{prop:cycle_resolution_invertibility}
For every \(\varepsilon>0\),
\[
p_\varepsilon(f,g)
\leq
\min\left\{
2,\frac{\mathcal C(f,g)}{\varepsilon^2}
\right\}.
\]
Consequently, if \(\mathcal C(f,g)\leq\delta\), then
\[
p_\varepsilon(f,g)
\leq
\min\left\{
2,\frac{\delta}{\varepsilon^2}
\right\}.
\]
\end{proposition}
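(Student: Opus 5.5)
The plan is to apply Markov's (Chebyshev's) inequality separately to each of the two terms defining \(p_\varepsilon(f,g)\), and then add. For the forward direction, set \(Z_0(x):=\|g(f(x))-x\|^2\). This is a nonnegative measurable function on \(\mathcal X\), and it is bounded by \(\operatorname{diam}(\mathcal X)^2\) because \(f,g\) are \(\mathcal X\)-valued. Since \(t\mapsto t^2\) is strictly increasing on \([0,\infty)\), the events \(\{\|g(f(X))-X\|>\varepsilon\}\) and \(\{Z_0(X)>\varepsilon^2\}\) coincide. Markov's inequality then gives
\[
\mathbb P_{X\sim\mu_0}\bigl(\|g(f(X))-X\|>\varepsilon\bigr)
\leq
\frac{1}{\varepsilon^2}\int_{\mathcal X}\|g(f(x))-x\|^2\,d\mu_0(x).
\]
The same argument, applied to \(Z_1(y):=\|f(g(y))-y\|^2\) under \(\mu_1\), bounds the reverse failure probability by \(\varepsilon^{-2}\int_{\mathcal X}\|f(g(y))-y\|^2\,d\mu_1(y)\).

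Next, sum the two inequalities. By definition \eqref{eq:bidirectional_cycle}, the right-hand sides add up to exactly \(\mathcal C(f,g)/\varepsilon^2\), so \(p_\varepsilon(f,g)\leq\mathcal C(f,g)/\varepsilon^2\). Separately, each of the two summands in \(p_\varepsilon\) is a probability and therefore at most \(1\), which gives \(p_\varepsilon(f,g)\leq2\). Taking the smaller of the two bounds yields the first claim. The second claim follows immediately: the map \(c\mapsto\min\{2,c/\varepsilon^2\}\) is nondecreasing, so \(\mathcal C(f,g)\leq\delta\) can be substituted directly.

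There is no substantive obstacle. The only point to check is measurability of \(x\mapsto g(f(x))\) and \(y\mapsto f(g(y))\), so that the probabilities and integrals are defined. This holds because \(f,g\) are measurable (indeed continuous in the admissible classes), and compositions of measurable maps are measurable.
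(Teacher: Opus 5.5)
Your proof is correct and matches the paper's argument: apply Markov's inequality to each directional cycle error, then sum to get \(\mathcal C(f,g)/\varepsilon^2\). The bound by \(2\) holds because \(p_\varepsilon(f,g)\) is a sum of two probabilities, and the \(\delta\)-version follows by monotonicity.
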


See Appendix~\ref{a:proof_cycle_resolution_invertibility} for the proof of this statement. Proposition~\ref{prop:cycle_resolution_invertibility} shows that cycle consistency narrows the endpoint-matched candidate class by requiring the forward and backward maps to reconstruct one another with high probability
at every fixed positive resolution. Nevertheless, the construction \((f_R,g_R)\) shows that even exact cycle consistency leaves multiple inverse-compatible candidates. Cycle consistency is therefore a candidate-reduction mechanism rather than, by itself, an optimal-transport selection principle.

\subsection{Endpoint mode coverage and suppression of map-level collapse}
\label{subsec:cycle_mode_collapse}

Mode collapse can refer to two distinct failures. First, the generated
endpoint distribution may fail to cover a region carrying nonnegligible
target mass. Second, the learned map may send widely dispersed inputs to the
same output, making the input difficult to recover from the generated
sample. Neural-JSD endpoint matching and cycle consistency control these two
failures in complementary ways.

We first formulate the endpoint guarantee using the neural JSD appearing in
the proposed objective. For a Borel set \(A\subseteq\mathcal X\) and
\(r>0\), define its \(r\)-neighborhood by
\[
A^{(r)}
:=
\left\{
z\in\mathcal X:
\operatorname{dist}(z,A)<r
\right\}.
\]

\begin{proposition}[Neural-JSD mode coverage at a prescribed spatial scale]
\label{prop:neural_jsd_mode_coverage}
Assume the discriminator conditions of
Section~\ref{sec:global_recovery}. The, for every \(r>0\) and
\(\delta\in(0,1)\), there exist
\[
M_{r,\delta}\in\mathbb N,
\qquad
\beta_{r,\delta}>0,
\]
such that, whenever
\[
m\geq M_{r,\delta},
\qquad
\widehat{\mathcal B}_m(f,g)\leq\beta_{r,\delta},
\]
one has, for every Borel set \(A\subseteq\mathcal X\),
\[
(f_\#\mu_0)(A^{(r)})
\geq
\mu_1(A)-\delta
\]
and
\[
(g_\#\mu_1)(A^{(r)})
\geq
\mu_0(A)-\delta.
\]
In particular, if
\[
(f_\#\mu_0)(A^{(r)})=0,
\]
then \(\mu_1(A)\leq\delta\), and the analogous conclusion holds in the
reverse direction.
\end{proposition}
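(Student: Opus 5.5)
The plan is to reduce everything to the uniform Wasserstein separation property \eqref{eq:neural_w2_separation} from Step~2 of the proof of Theorem~\ref{thm:neural_jsd_ot_recovery}, and then convert $W_2$-closeness into a neighborhood mass inequality. Fix $r>0$ and $\delta\in(0,1)$. I would set $a:=r\sqrt{\delta}$ and apply \eqref{eq:neural_w2_separation} with $(Q,a)=(\mu_1,a)$ and with $(Q,a)=(\mu_0,a)$. This produces constants $M^{\mathrm f},\beta^{\mathrm f}$ and $M^{\mathrm g},\beta^{\mathrm g}$. Then put $M_{r,\delta}:=\max\{M^{\mathrm f},M^{\mathrm g}\}$ and $\beta_{r,\delta}:=\min\{\beta^{\mathrm f},\beta^{\mathrm g}\}$. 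Since both neural JSD terms are nonnegative (because $D\equiv1/2\in\mathcal D_m$), the bound $\widehat{\mathcal B}_m(f,g)\le\beta_{r,\delta}$ gives each term at most $\beta_{r,\delta}$. Hence, for $m\ge M_{r,\delta}$, we get $W_2(f_\#\mu_0,\mu_1)<a$ and $W_2(g_\#\mu_1,\mu_0)<a$.

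The second step is a Markov-type transfer from $W_2$ to set masses. Let $P=f_\#\mu_0$ and $Q=\mu_1$. On the compact $\mathcal X\times\mathcal X$ an optimal coupling $\pi\in\Pi(Q,P)$ exists, with $\int\|y-z\|^2\,d\pi=W_2^2(P,Q)<a^2$. For a Borel set $A$, write
\[
\mu_1(A)=\pi(A\times\mathcal X)\le\pi(\{(y,z):y\in A,\ \|y-z\|<r\})+\pi(\{\|y-z\|\ge r\}).
\]
If $y\in A$ and $\|y-z\|<r$, then $z\in A^{(r)}$, since $z\in\mathcal X$. So the first term is at most $P(A^{(r)})$. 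By Chebyshev, the second term is at most $W_2^2(P,Q)/r^2<a^2/r^2=\delta$. This yields $(f_\#\mu_0)(A^{(r)})\ge\mu_1(A)-\delta$, and the reverse inequality follows identically. The "in particular" clause is immediate.

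I expect no serious obstacle, since the heavy lifting, uniform separation via weak compactness and the identification property, is already done in Step~2 of Theorem~\ref{thm:neural_jsd_ot_recovery}. The only points needing care are measurability of $A^{(r)}$ and of the event $\{\|y-z\|<r\}$, which are open sets, and the existence of an optimal coupling. If existence of an optimal coupling were a concern, I would instead take a coupling with cost below $a^2$, which exists because the infimum is strictly less than $a^2$.
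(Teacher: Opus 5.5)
Your proposal is correct and follows the paper's proof essentially step for step: the same choice $a=r\sqrt{\delta}$, the same application of the uniform separation property to $\mu_1$ and $\mu_0$ (taking the larger level and the smaller threshold), and the same Markov/Chebyshev bound under an optimal coupling. Your remarks that nonnegativity of each neural-JSD term lets the bound on $\widehat{\mathcal B}_m$ control each term separately, and that $A^{(r)}$ is Borel, spell out small points the paper leaves implicit.
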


\begin{proof}
Set
\[
a:=r\sqrt{\delta}.
\]
Apply the neural-JSD separation property from
Section~\ref{sec:global_recovery} to the target measures
\(\mu_1\) and \(\mu_0\). Taking the larger of the resulting discriminator
levels and the smaller of the resulting neural-JSD thresholds gives
\(M_{r,\delta}\) and \(\beta_{r,\delta}\) such that
\[
m\geq M_{r,\delta},
\qquad
\widehat{\mathcal B}_m(f,g)\leq\beta_{r,\delta}
\]
imply
\[
W_2(f_\#\mu_0,\mu_1)<a
\]
and
\[
W_2(g_\#\mu_1,\mu_0)<a.
\]

Let \(P:=f_\#\mu_0\), and let
\(\pi\in\Pi(P,\mu_1)\) be an optimal quadratic coupling. If
\(y\in A\) and \(z\notin A^{(r)}\), then
\(\|z-y\|\geq r\). Therefore,
\[
\begin{aligned}
\mu_1(A)
&\leq
P(A^{(r)})
+
\pi\bigl(\{(z,y):\|z-y\|\geq r\}\bigr)\\
&\leq
P(A^{(r)})
+
\frac{1}{r^2}
\int_{\mathcal X\times\mathcal X}
\|z-y\|^2\,d\pi(z,y)\\
&=
P(A^{(r)})
+
\frac{W_2^2(P,\mu_1)}{r^2}\\
&<
P(A^{(r)})+\delta.
\end{aligned}
\]
This proves
\[
(f_\#\mu_0)(A^{(r)})
\geq
\mu_1(A)-\delta.
\]
The reverse inequality follows by applying the same argument to
\(g_\#\mu_1\) and \(\mu_0\).
\end{proof}

The enlargement from \(A\) to \(A^{(r)}\) is essential. Wasserstein
closeness does not control the probabilities of arbitrary sets without
additional assumptions on their boundaries. Likewise, a small neural JSD at
one fixed discriminator level does not imply the total-variation estimate
available for unrestricted population JSD. Proposition
\ref{prop:neural_jsd_mode_coverage} instead gives the appropriate
neural-JSD guarantee: after sufficient discriminator refinement, target mass
cannot be entirely absent from a prescribed positive neighborhood.

For the global minimizers considered in
Theorem~\ref{thm:neural_jsd_ot_recovery}, the benchmark inequality gives
\[
\widehat{\mathcal B}_m(f_{\lambda,m},g_{\lambda,m})
\leq
\lambda W_2^2(\mu_0,\mu_1).
\]
Consequently, the conclusion of
Proposition~\ref{prop:neural_jsd_mode_coverage} holds whenever
\[
m\geq M_{r,\delta},
\qquad
0<\lambda
\leq
\frac{\beta_{r,\delta}}
{W_2^2(\mu_0,\mu_1)}.
\]
This relates the spatial resolution \(r\), the tolerated missing mass
\(\delta\), the discriminator level \(m\), and the action weight
\(\lambda\).

We next quantify the many-to-one map collapse. Let \(X_0\sim\mu_0\) and
\(X_1\sim\mu_1\), and define
\[
\mathcal V_0(f)
:=
\mathbb E\!\left[
\operatorname{Var}(X_0\mid f(X_0))
\right],
\qquad
\mathcal V_1(g)
:=
\mathbb E\!\left[
\operatorname{Var}(X_1\mid g(X_1))
\right],
\]
where
\[
\operatorname{Var}(X\mid Z)
:=
\mathbb E\!\left[
\|X-\mathbb E[X\mid Z]\|^2
\,\middle|\,
Z
\right].
\]
These quantities measure the average dispersion of inputs associated with a
common generated output.

Define the directional cycle losses
\[
\mathcal C_0(f,g)
:=
\mathbb E\|g(f(X_0))-X_0\|^2,
\qquad
\mathcal C_1(f,g)
:=
\mathbb E\|f(g(X_1))-X_1\|^2.
\]

\begin{proposition}[Cycle consistency controls many-to-one collapse]
\label{prop:cycle_bounds_map_collapse}
For any measurable maps \(f,g\) for which the displayed quantities are
finite,
\[
\mathcal V_0(f)\leq\mathcal C_0(f,g),
\qquad
\mathcal V_1(g)\leq\mathcal C_1(f,g).
\]
Consequently,
\[
\mathcal V_0(f)+\mathcal V_1(g)
\leq
\mathcal C(f,g).
\]
\end{proposition}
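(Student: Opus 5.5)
The plan is to use the $L^2$-projection (best-predictor) characterization of conditional expectation. Fix the forward direction and set $Z:=f(X_0)$. Since $\mathcal X$ is compact, $X_0$ is bounded and hence square-integrable. Thus $\mathbb E[X_0\mid Z]$ is the orthogonal projection of $X_0$ onto the closed subspace of $\sigma(Z)$-measurable square-integrable random vectors. Consequently, for every Borel map $h$ with $h(Z)\in L^2$,
\[
\mathbb E\|X_0-h(Z)\|^2
=
\mathbb E\|X_0-\mathbb E[X_0\mid Z]\|^2
+
\mathbb E\|\mathbb E[X_0\mid Z]-h(Z)\|^2
\geq
\mathcal V_0(f).
\]
The identity $\mathbb E\|X_0-\mathbb E[X_0\mid Z]\|^2=\mathbb E[\operatorname{Var}(X_0\mid Z)]$ follows from the tower property and the definition of $\operatorname{Var}(\cdot\mid Z)$ given before the statement.

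Next, apply this bound with $h=g$. The composition $g(f(X_0))=g(Z)$ is $\sigma(Z)$-measurable because $g$ is Borel. It is square-integrable, either because $g$ is $\mathcal X$-valued or because the hypothesis that the displayed quantities are finite gives $\mathcal C_0(f,g)<\infty$. Hence
\[
\mathcal V_0(f)\leq\mathbb E\|g(f(X_0))-X_0\|^2=\mathcal C_0(f,g).
\]
The reverse bound $\mathcal V_1(g)\leq\mathcal C_1(f,g)$ follows by the same argument with the roles exchanged: take $Z=g(X_1)$ and $h=f$. Adding the two inequalities and recalling that $\mathcal C=\mathcal C_0+\mathcal C_1$ by \eqref{eq:bidirectional_cycle} yields the final claim.

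The only point needing care is the measurability bookkeeping. First, $g\circ f$ must be a function of $f(X_0)$ alone, and this holds because $g$ is a measurable map applied to $Z$. Second, the cross term in the Pythagorean expansion must vanish, which holds because $\mathbb E[X_0\mid Z]-g(Z)$ is $\sigma(Z)$-measurable and square-integrable. No optimal-transport structure is used; the statement is a pure consequence of the variational characterization of conditional expectation.
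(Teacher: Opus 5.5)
Your proposal is correct and follows essentially the same argument as the paper. Both observe that $g(f(X_0))$ is $\sigma(f(X_0))$-measurable, invoke the best-$L^2$-predictor property of $\mathbb E[X_0\mid f(X_0)]$, and then swap the roles of the two maps and add; your Pythagorean expansion and integrability checks just spell out details the paper leaves implicit.
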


\begin{proof}
The random variable \(g(f(X_0))\) is measurable with respect to
\(\sigma(f(X_0))\). Since
\(\mathbb E[X_0\mid f(X_0)]\) is the best
\(\sigma(f(X_0))\)-measurable \(L^2\)-predictor of \(X_0\),
\[
\mathbb E
\left\|
X_0-\mathbb E[X_0\mid f(X_0)]
\right\|^2
\leq
\mathbb E
\|X_0-g(f(X_0))\|^2.
\]
This proves the source-side inequality. Interchanging
\((X_0,f,g)\) with \((X_1,g,f)\) proves the target-side inequality.
Adding the two bounds gives the final claim.
\end{proof}

The same estimate controls conditional trajectory dispersion. Define
\[
X_t^F=(1-t)X_0+t f(X_0),
\qquad
X_t^B=(1-t)g(X_1)+tX_1.
\]
Conditionally on \(f(X_0)\), the term \(t f(X_0)\) is fixed, and
conditionally on \(g(X_1)\), the term \((1-t)g(X_1)\) is fixed.
Consequently,
\[
\mathbb E\!\left[
\operatorname{Var}(X_t^F\mid f(X_0))
\right]
=
(1-t)^2\mathcal V_0(f)
\leq
(1-t)^2\mathcal C_0(f,g)
\]
and
\[
\mathbb E\!\left[
\operatorname{Var}(X_t^B\mid g(X_1))
\right]
=
t^2\mathcal V_1(g)
\leq
t^2\mathcal C_1(f,g).
\]

Propositions~\ref{prop:neural_jsd_mode_coverage} and
\ref{prop:cycle_bounds_map_collapse} therefore describe complementary controls. After sufficient discriminator refinement, neural JSD prevents target mass from being absent at a prescribed positive spatial scale, whereas cycle consistency controls many-to-one preimage collapse and the resulting conditional dispersion of learned trajectories. Consistent with this analysis, the cycle ablation results in Appendix~\ref{app:cycle_ablation} show that removing cycle consistency leads to mode collapse. These are
population-level guarantees and do not, by themselves, eliminate errors caused by finite samples or incomplete optimization.

\subsection{Local interaction among action, neural JSD, \& cycle consistency}
\label{subsec:dynamic_stability_low_cycle}

We next examine the local interaction among the three terms in the population
objective defined in equation~\ref{eq:population_objective}. Unlike an exactly constrained formulation, CyclOT does not project generator updates onto the marginal-matching or inverse-compatible classes. We therefore analyze a general perturbation of the neural-generator parameters.

Fix a discriminator level \(m\), and write the generators as
\(f=f_\theta\) and \(g=g_\phi\). An admissible smooth perturbation is a pair
of one-sided \(C^2\) parameter curves
\[
\alpha\longmapsto\theta_\alpha,
\qquad
\alpha\longmapsto\phi_\alpha,
\qquad
\alpha\geq0,
\]
that remain in the admissible parameter sets and satisfy
\[
\theta_0=\theta,
\qquad
\phi_0=\phi.
\]
Any range restrictions imposed on the generators are also required to hold
along these curves.

For this subsection only, assume that the generator realization maps are
jointly \(C^2\) in their inputs and parameters on a neighborhood of the
relevant compact sets. This holds, for example, for finite neural networks
with \(C^2\) activations and smooth output maps along bounded parameter
curves. Writing
\[
f_\alpha:=f_{\theta_\alpha},
\qquad
g_\alpha:=g_{\phi_\alpha},
\]
we then have
\[
f_\alpha=f-\alpha U_f+O(\alpha^2),
\qquad
g_\alpha=g-\alpha U_g+O(\alpha^2),
\]
with uniform \(C^1\) remainders, where
\[
U_f:=
-\left.\frac{d}{d\alpha}f_{\theta_\alpha}\right|_{\alpha=0^+},
\qquad
U_g:=
-\left.\frac{d}{d\alpha}g_{\phi_\alpha}\right|_{\alpha=0^+}.
\]

Define
\[
\begin{aligned}
a(f,g;U_f,U_g)
:=
\int_{\mathcal X}
\langle f(x)-x,U_f(x)\rangle\,d\mu_0(x)
+
\int_{\mathcal X}
\langle g(y)-y,U_g(y)\rangle\,d\mu_1(y).
\end{aligned}
\]
The quadratic action satisfies
\[
\mathcal A(f_\alpha,g_\alpha)
=
\mathcal A(f,g)
-
\alpha a(f,g;U_f,U_g)
+
O(\alpha^2).
\]
Thus \(a>0\) means that the perturbation decreases the action to first
order.

Because the perturbation may change the endpoint marginals, the neural-JSD
term also contributes. Suppose that the fixed-level discriminator class has
a compact parameterization, its discriminators are continuously
differentiable on the relevant compact set, and their values are uniformly
bounded away from \(0\) and \(1\). Danskin's theorem then gives the
one-sided expansion
\[
\widehat{\mathcal B}_m(f_\alpha,g_\alpha)
=
\widehat{\mathcal B}_m(f,g)
+
\alpha b_m(f,g;U_f,U_g)
+
o(\alpha),
\]
where
\[
b_m(f,g;U_f,U_g)
:=
\left.
\frac{d}{d\alpha}
\widehat{\mathcal B}_m(f_\alpha,g_\alpha)
\right|_{\alpha=0^+}.
\]
If the maximizing discriminator is not unique, this derivative is obtained
by maximizing the corresponding directional derivative over the active
maximizers.

We next expand the cycle loss without assuming inverse compatibility.
Define the current residuals
\[
R_0(x):=g(f(x))-x,
\qquad
R_1(y):=f(g(y))-y,
\]
and their first-order changes
\[
W_0(x)
:=
Dg(f(x))U_f(x)+U_g(f(x)),
\]
\[
W_1(y)
:=
Df(g(y))U_g(y)+U_f(g(y)).
\]
Taylor expansion gives
\[
g_\alpha(f_\alpha(x))-x
=
R_0(x)-\alpha W_0(x)+O(\alpha^2)
\]
and
\[
f_\alpha(g_\alpha(y))-y
=
R_1(y)-\alpha W_1(y)+O(\alpha^2).
\]
Consequently,
\[
\mathcal C(f_\alpha,g_\alpha)
=
\mathcal C(f,g)
-
2\alpha\Gamma(f,g;U_f,U_g)
+
O(\alpha^2),
\]
where
\[
\Gamma(f,g;U_f,U_g)
:=
\langle R_0,W_0\rangle_{L^2(\mu_0)}
+
\langle R_1,W_1\rangle_{L^2(\mu_1)}.
\]
Thus \(\Gamma>0\) means that the perturbation decreases cycle loss to first
order, whereas \(\Gamma<0\) means that cycle consistency opposes the
perturbation.

For
\[
E_{\mathrm{mis}}(f,g;U_f,U_g)
:=
\|W_0\|_{L^2(\mu_0)}^2
+
\|W_1\|_{L^2(\mu_1)}^2,
\]
Cauchy--Schwarz gives
\[
|\Gamma(f,g;U_f,U_g)|
\leq
\sqrt{
\mathcal C(f,g)
E_{\mathrm{mis}}(f,g;U_f,U_g)
}.
\]
Hence, along any fixed perturbation, the first-order cycle contribution is
small near a low-cycle-loss pair. In the special case of exact inverse
compatibility, \(R_0=R_1=0\), so \(\Gamma=0\) and
\[
\mathcal C(f_\alpha,g_\alpha)
=
\alpha^2E_{\mathrm{mis}}(f,g;U_f,U_g)
+
O(\alpha^3).
\]
Exact inverse compatibility is therefore not assumed; it merely recovers the
previous quadratic special case.

Combining the three expansions for
\[
\mathcal E_{\lambda,m}
=
\lambda\mathcal A
+
\widehat{\mathcal B}_m
+
\mathcal C
\]
yields
\[
\begin{aligned}
\mathcal E_{\lambda,m}(f_\alpha,g_\alpha)
-
\mathcal E_{\lambda,m}(f,g)
&=
\alpha
\left[
b_m(f,g;U_f,U_g)
-
\lambda a(f,g;U_f,U_g)
-
2\Gamma(f,g;U_f,U_g)
\right]\\
&\quad+o(\alpha).
\end{aligned}
\]
Therefore, if
\[
\lambda a(f,g;U_f,U_g)
+
2\Gamma(f,g;U_f,U_g)
>
b_m(f,g;U_f,U_g),
\]
then the complete neural-JSD population objective strictly decreases for all
sufficiently small \(\alpha>0\). Any pair admitting such a direction cannot
be a local minimizer or a stationary point with respect to all admissible
directions.

This condition separates the three local forces. The term
\(\lambda a\) favors lower quadratic transport cost, \(b_m\) measures the
change in endpoint matching, and \(\Gamma\) measures whether the same update
improves or worsens cycle consistency. When \(\Gamma<0\), action and cycle
consistency compete; when \(\Gamma>0\), they cooperate. Decreasing
\(\lambda\) correspondingly shifts the local balance from transport cost
toward endpoint matching and cycle consistency.

This is a local population-level calculation with optimized discriminators.
It does not imply that every nonoptimal pair admits a decreasing direction,
nor does it establish convergence of finite-sample alternating or stochastic
training. Global recovery of the Brenier pair remains the content of
Theorem~\ref{thm:neural_jsd_ot_recovery}.

\section{Experiments}\label{sec:experiments}
We evaluate the proposed method, CyclOT, on five different datasets, spanning synthetic geometry, handwritten images, natural images, and scientific data: \emph{Swiss roll}, \emph{MNIST}, \emph{CelebA}, \emph{single-cell data}, and \emph{chest X-ray images}. The experiments are designed to test three aspects of the method that are central to our claims:
(i) \emph{marginal matching}, namely whether the learned pushforward matches the target distribution;
(ii) \emph{transport structure}, namely whether the learned map remains short, regular, and approximately invertible; and
(iii) \emph{intermediate geometry}, namely whether the induced interpolation path is more coherent than that of standard baselines.

\subsection{Experimental protocol \& interpretation of metrics}
\label{subsec:experimental_protocol}

We evaluate the three complementary roles represented in the population
objective
\[
\mathcal E_{\lambda,m}
=
\lambda\mathcal A
+
\widehat{\mathcal B}_m
+
\mathcal C:
\]
endpoint marginal matching, quadratic transport cost, and approximate inverse
consistency. Because the true Brenier maps are unavailable in the real-data
experiments and there exists a trade-off between the transportation loss and the marginal distribution matching loss in practice, no single reported metric measures OT-map recovery performance. We therefore use several complementary diagnostics and interpret them jointly.

\paragraph{Comparison methods.}
We compare CyclOT with entropic OT based on Sinkhorn iterations~\citep{cuturi2013sinkhorn},
Neural OT~\citep{korotin2023neural}, ICNN OT~\citep{makkuva2020optimal}, rectified flow with reflow~\citep{liu2022rectified}, and mini-batch OT-guided
conditional flow matching (OT-CFM)~\citep{tong2023improving}. Sinkhorn is treated as a finite-sample,
transductive coupling reference that does not define an out-of-sample map. The
remaining methods learn global neural maps or ODE transports.

\begin{itemize}[leftmargin=*]
    \item \textbf{CyclOT}:\ a bidirectional learned transport method with forward and backward maps, dynamic path matching, cycle consistency, and an annealed quadratic displacement penalty.
    \item \textbf{Entropic OT}: a Sinkhorn solver. For image experiments, we use the entropic coupling to obtain hard matches by selecting the target point with largest transport mass for each source point.
    \item \textbf{Neural OT}: a learned deterministic map trained through a neural optimal transport saddle objective.
    \item \textbf{ICNN OT}: an input-convex neural network baseline that learns convex Kantorovich potentials and defines the transport map as a potential gradient.
    \item \textbf{Rectified flow with reflow}: an iterative rectified-flow baseline that repeatedly retrains the velocity field using endpoint pairs induced by the previously learned flow, thereby straightening the learned transport trajectories.
    \item \textbf{Mini-batch OT-guided flow matching}: an OT-CFM-style baseline that first computes mini-batch Sinkhorn pairings and then trains a velocity field on the induced straight-line paths.
\end{itemize}

\paragraph{Evaluation Metrics}
To evaluate the numerical performance of comparison methods, we adopt the following metrics among the baseline datasets, when appropriate:

\begin{itemize}[leftmargin = *]
    \item \textbf{Empirical transport cost}: For image experiments, the reported cost is the average squared displacement between a source sample and its transported output. A lower value means that the learned map moves samples by a shorter distance. This quantity estimates the forward directional component of the quadratic action \(\mathcal A\). \textit{It is not, by itself, a measure of OT-map quality. In particular, a map that remains close to the source distribution can have low cost while failing to reach the target distribution. We therefore interpret transport cost together with endpoint-matching metrics.}
    \item \textbf{Target-domain rate and target probability score}: The target-domain rate (TDR) is the fraction of transported samples classified as belonging to the target domain, while the target probability score (TPS) is the average classifier probability assigned to the target domain. Higher values indicate stronger classifier-level target-domain transfer, which serves as a practical proxy of less mode collapse.
    \item \textbf{Feature-distribution discrepancy}: FID~\cite{heusel2017gans} compares transported and target samples through their empirical means and covariances in a pretrained feature space. A lower FID indicates closer agreement of these feature distributions. MMD~\cite{JMLR:v13:gretton12a} compares transported and target samples through kernel mean embeddings, with a lower empirical \(\operatorname{MMD}^2\) indicating closer distributional agreement in the chosen feature space and kernel family. Both quantities are finite-sample, representation-dependent proxies for marginal distribution matching performance.
    \item \textbf{Class coverage}: For MNIST, Class-TV measures the total variation distance between the predicted target-class proportions of transported samples and those of real target samples. A lower value indicates better coverage of digits \(5,\ldots,9\). Because Class-TV is computed conditionally on samples classified into the target domain, it must be interpreted jointly with TDR: a method may have balanced proportions among a small number of successful outputs while failing to transport most source samples.
    \item \textbf{Local mixing}: Enrich-\(k100\) compares local neighborhoods in the pooled transported and target sample sets. When the two sets have an equal size that is sufficiently large, a value near \(0.5\) indicates local mixing. We therefore report $\left|\operatorname{Enrich}_{100}-0.5\right|$, for which a smaller value is better. This measures local mixing in the chosen representation and complements global statistics such as FID and MMD.
    \item \textbf{Perturbation-signature correlation}: For the single-cell task, signature correlation measures agreement between the population-level feature shift produced by the learned map and the observed control-to-treatment feature shift. A higher value indicates better recovery of the average perturbation pattern. 
    \item \textbf{Cycle consistency and map collapse}: Forward and reverse cycle errors measure how close is $g_\phi(f_\theta(x))$ and $f_\theta(g_\phi(y))$ to $x$ and $y$ on average, respectively. In the experiments below, the displayed cycle reconstructions provide a qualitative version of this diagnostic. They do not establish global injectivity or exclude collapse on unobserved or low-probability regions.
\end{itemize}

\paragraph{Computational measures.}
Training time measures wall-clock optimization cost under the stated hardware
and software configuration. Image exposures count the number of source or
target training examples accessed during optimization and therefore measure
sample usage rather than transport quality. These quantities should be
interpreted separately from statistical performance.

\paragraph{Connection with the theory.}
The empirical quantities mirror the theoretical roles of the three objective terms numerically with a goal to assess whether the observed tradeoffs are consistent with the mechanism underlying Theorem~\ref{thm:neural_jsd_ot_recovery} and the structural results of Section~\ref{sec:theory}. Distributional metrics probe endpoint agreement, empirical squared displacement probes the action term, and held-out cycle error probes approximate inverse compatibility. Their joint use is essential: low displacement without endpoint agreement can favor a near-identity map, while low marginal agreement or cycle error is insufficient in selecting the Brenier map without effectively penalizing the displacement loss.

\subsection{Swiss roll: geometric fidelity in low dimensions}
\label{subsec:swissroll}

\paragraph{Setup.}
We first consider transport from a two-dimensional Gaussian source to a Swiss-roll target. This experiment isolates the geometric aspect of the problem: both endpoint distributions and the induced transport paths can be visualized directly in the data space. The goal is not only to match the target marginal, but also to learn a transport map whose pointwise displacements are coherent with the geometry of the target manifold.

\paragraph{Results.}
Figure~\ref{fig:swissroll_qualitative} provides a qualitative visualization of the learned bidirectional transport. Marginal matching shows that the forward map transports Gaussian samples to the Swiss-roll distribution, while the backward map returns target samples to the Gaussian source. Cycle reconstructions approximately preserve target samples, indicating approximate inverse consistency between the two maps. The interpolation paths from source samples $z$ to their endpoints $f_\theta(z)$ illustrate the displacement field induced by the learned forward map.

\begin{figure}[t]
    \centering

    \subfigure[Pointwise interpolation paths $z\to f_\theta(z)$.]{
        \includegraphics[width=0.47\linewidth]{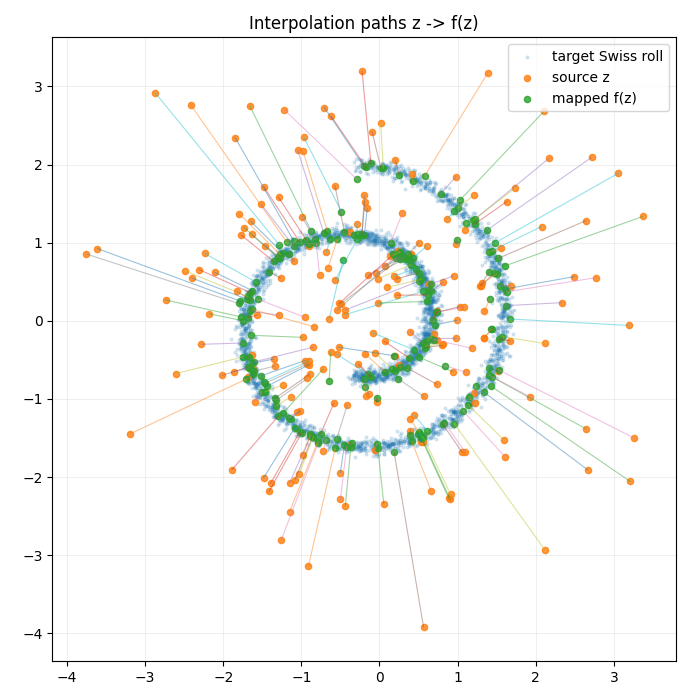}
        \label{fig:swissroll_paths_panel}
    }
    \hfill
    \subfigure[Marginal matching and cycle reconstruction.]{
        \includegraphics[width=0.47\linewidth]{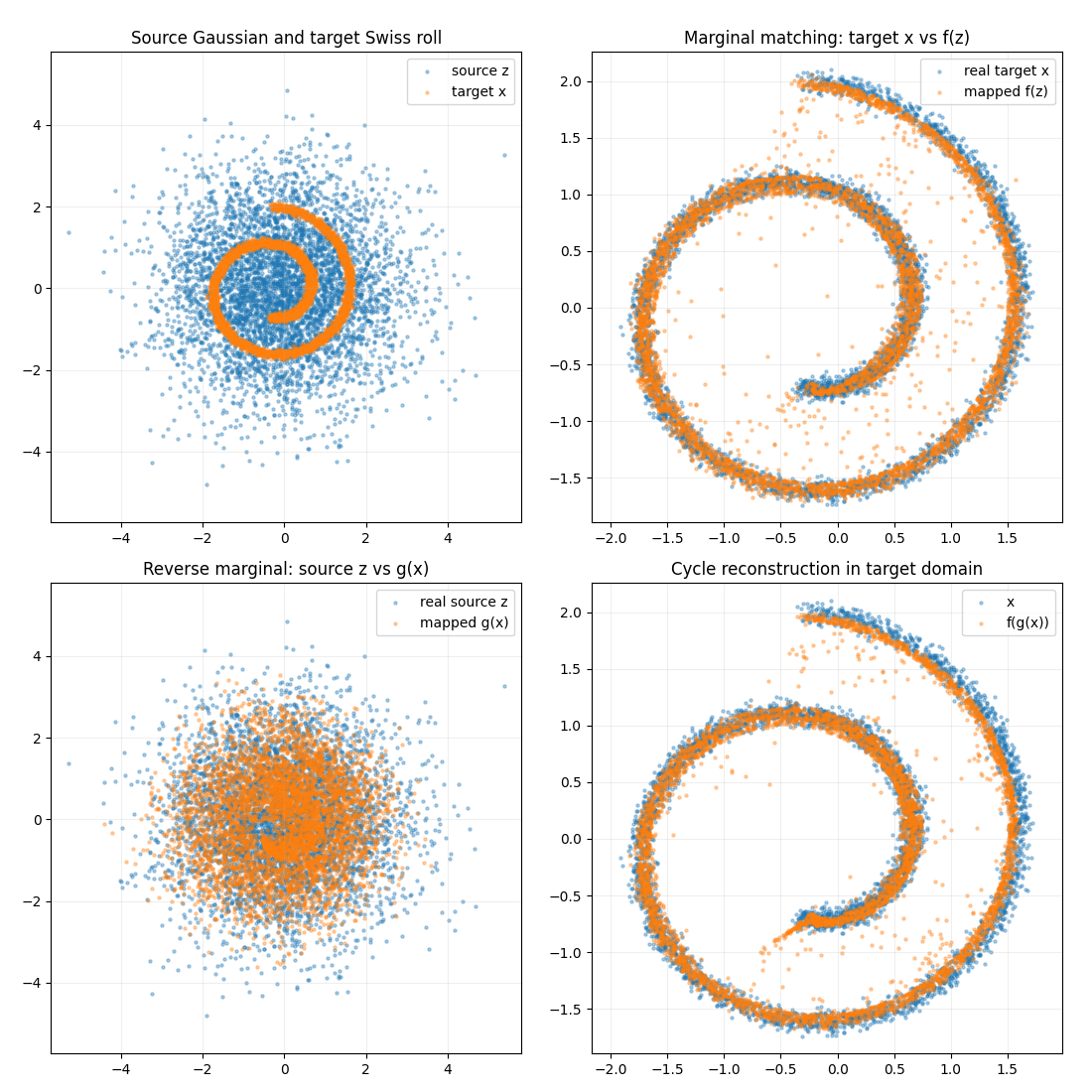}
        \label{fig:swissroll_panel}
    }

    \caption{
    Qualitative visualization of the Gaussian-to-Swiss-roll experiment. The left panel shows pointwise map-induced interpolation paths from
    source samples to transported endpoints. The right panel shows forward
    marginal matching, reverse marginal matching, and target-domain cycle
    reconstruction.
    }
    \label{fig:swissroll_qualitative}
\end{figure}

\subsection{MNIST: transport from digits \(0\text{--}4\) to digits \(5\text{--}9\)}
\label{subsec:mnist}

We next study unpaired transport between two handwritten digit domains. The source distribution consists of MNIST~\cite{lecun1998gradient} digits \(0,1,2,3,4\), and the target distribution consists of digits \(5,6,7,8,9\). This experiment tests whether a method can learn a global map that transports samples into the target digit domain while maintaining controlled displacement and avoiding trivial low-cost solutions that remain close to the source images.

\paragraph{Setup.}
All neural methods use MNIST images normalized to \([-1,1]\). The proposed method, CyclOT, learns a forward map \(f_\theta\) from digits \(0\text{--}4\) to digits \(5\text{--}9\) and a backward map \(g_\phi\) from digits \(5\text{--}9\) to digits \(0\text{--}4\), both implemented as residual CNN encoder--decoder maps.

For entropic OT, we solve a finite-sample Sinkhorn problem between sampled source images and sampled target images using squared Euclidean pixel cost. Given the entropic coupling \(P\), we form a hard matching by assigning each source image \(x_i\) to the target image \(y_{j^\star(i)}\), where
\[
    j^\star(i)=\arg\max_j P_{ij}.
\]
This baseline is transductive: it provides a strong finite-sample OT reference but does not learn a global map for unseen source images. For neural OT, ICNN OT, rectified flow, and mini-batch OT-guided flow matching, we train global maps or ODE flows and evaluate them on held-out source test images.

\paragraph{Evaluation.}
Across experiments, we report metrics that separately quantify transport
structure, target-population matching, and computational efficiency. For the MNIST transport task, these include the per-sample squared transport cost, target-domain rate (TDR), target probability score (TPS), MNIST-feature FID, class-distribution total variation distance (Class-TV), and training time. Detailed definitions and computation procedures are provided in Appendix~\ref{app:evaluation_metrics}.

\paragraph{Results.}
Table~\ref{tab:mnist_results} reports the quantitative results. Among learned global maps, CyclOT achieves the best overall tradeoff between target-domain generation quality and transport structure. It attains a target-domain rate of \(0.957\), indicating that most transported images are classified as digits \(5\text{--}9\), while also achieving the lowest MNIST-feature FID among learned methods. The class-distribution TV distance is also the smallest among learned methods, suggesting that the transported samples better cover the target digit classes rather than collapsing to a small subset of target digits.

Neural OT achieves a lower transport cost but worse FID and class-distribution discrepancy, suggesting that lower displacement alone does not necessarily imply better target-distribution matching. The Sinkhorn baseline achieves high target-domain validity by directly matching to observed target samples, but does not define a learned global transport map.

Figure~\ref{fig:mnist_results} presents qualitative results on MNIST. The first row shows the source digits (0-4), while the second row shows their corresponding mapped digits in the target classes (5-9). Additional visualization results are provided in Appendix~\ref{app:mnist_extra}. The visual results are consistent with the quantitative metrics: CyclOT produces samples that are visually closer to target digits \(5\text{--}9\), while the lower-cost flow baselines tend to preserve too much of the original \(0\text{--}4\) digit structure.

\begin{table}[t]
\centering
\small
\setlength{\tabcolsep}{4pt}
\begin{tabular}{lccccccc}
\toprule
Method
& Global map?
& Cost $\downarrow$
& TDR $\uparrow$
& TPS $\uparrow$
& FID $\downarrow$
& Class-TV $\downarrow$ 
& Train Time (min) $\downarrow$ \\
\midrule
Sinkhorn
& No
& 245.201
& 0.992
& 0.984
& 93.773
& 0.072 
& -- \\
CyclOT
& Yes
& 212.43
& \textbf{0.957}
& \textbf{0.954}
& \textbf{92.269}
& \textbf{0.039} 
& \textbf{7} \\
Neural OT
& Yes
& 133.68
& 0.956
& 0.952
& 120.45
& 0.130 
& 198 \\
ICNN OT
& Yes
& 210.82
& 0.805
& 0.749
& 1565.65
& 0.162 
& 43 \\
Rectified flow
& Yes
& 246.18
& 0.565
& 0.567
& 1282.97
& 0.171
& 29 \\
OT-CFM
& Yes
& \textbf{100.87}
& 0.390
& 0.393
& 841.36
& 0.241 
& 32 \\
\bottomrule
\end{tabular}
\caption{MNIST \(0\text{--}4\to5\text{--}9\) transport. We report transport cost, target-domain rate (TDR), target probability score (TPS), MNIST-feature FID, and total variation distance between predicted and real target-class distributions over digits \(5,\ldots,9\). Sinkhorn hard matching is a finite-sample transductive baseline and does not define a learned global map. Among learned global maps, CyclOT achieves the best MNIST-feature FID and target class-distribution matching, while maintaining high target-domain validity.}
\label{tab:mnist_results}
\end{table}

\begin{figure}[t]
    \centering
    \includegraphics[width=.95\linewidth]{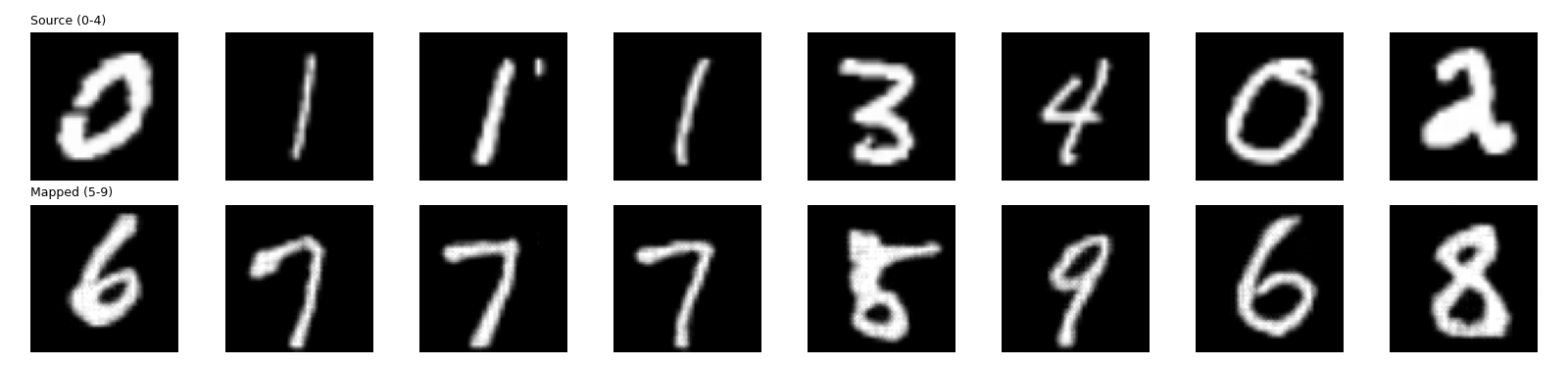}
    \caption{MNIST \(0\text{--}4\to5\text{--}9\) transport.}
    \label{fig:mnist_results}
\end{figure}

\subsection{CelebA: bidirectional transport on natural images}
\label{subsec:celeba}

We then consider a transport task on CelebA~\cite{liu2015deep}, mapping between female and male face distributions. Compared with MNIST, this setting is substantially more challenging: the data lie on a richer natural-image manifold, and successful transport must preserve shared facial structure while modifying domain-level attributes.

\paragraph{Setup.}
We partition CelebA using the \texttt{Male} attribute, taking female faces as the source domain and male faces as the target domain. The forward map $f_\theta$ learns to transport female faces to the male domain, while the backward map $g_\phi$ learns the reverse mapping from male faces to the female domain. Images are center-cropped, resized to \(64\times64\), and normalized to \([-1,1]\). The forward and backward maps are implemented as residual encoder-decoder CNNs with skip connections.

\paragraph{Evaluation.}
Across all experiments, we present qualitative results and quantitatively compare CyclOT with Neural OT and ICNN OT using seven evaluation metrics: transport cost, Fréchet Inception Distance (FID), target domain rate(TDR), Target Probability Score(TPS), Enrich-k100 and Female Image Exposures(FIE), and Male Image Exposures(MIE). The results are reported in Table~\ref{tab:celeba_quantitative_comparison}. The TDR measures the proportion of transported source-female images that are classified as male, while the TPS measures the average predicted probability assigned to the male class. Both metrics are computed using the FairFace classifier~\citep{DBLP:journals/corr/abs-1908-04913}. As a real-data reference, we also report FairFace results on real CelebA female and male images in Table \ref{tab:fairface_baseline}. Female/Male Image Exposures denote the total number of times female/male images from the CelebA training set are accessed during training. Detailed definitions and computation procedures of each metric are provided in Appendix \ref{app:evaluation_metrics}.

\paragraph{Results.}
Figure~\ref{fig:face_cycle} qualitatively demonstrates the learned
bidirectional transport on CelebA. The forward and backward maps modify
gender-related visual attributes while largely preserving pose, facial layout, and background. The generated images do not exhibit obvious visual artifacts, suggesting that the learned maps remain stable under cross-domain translation. The cycle reconstructions $g_\phi(f_\theta(x))$ and $f_\theta(g_\phi(y))$ are visually close to the original samples, indicating that the cycle-consistency term helps retain sample-level information and discourages many-to-one collapse. Although some fine details are blurred or slightly smoothed, the reconstructions preserve the main semantic and structural content of the inputs. Additional experimental results are presented in Appendix~\ref{app:celeba_extra}.

Table~\ref{tab:celeba_quantitative_comparison} shows that Neural OT achieves a lower transport cost, whereas CyclOT achieves higher TDR and TPS. Moreover, our results are closer to the FairFace baseline reported in Table~\ref{tab:fairface_baseline}: The TDR is 92.0\% versus baseline 92.6\%, and the TPS is 0.8602 $\pm$ 0.20 versus baseline 0.8893 $\pm$ 0.21. In addition, Figure~\ref{fig:TDR_time} further shows that CyclOT reaches a higher TDR more rapidly during training.

Figure~\ref{fig:face_comparison} shows that, under the same training budget, CyclOT produces stronger and more consistent female-to-male translations than Neural OT while preserving pose, background, and coarse facial structure. In contrast, Neural OT often retains more source-domain appearance, despite achieving a lower transport cost. This comparison highlights that transport cost alone is not sufficient for learning an effective cross-domain map: when the target marginal is not enforced strongly enough, minimizing displacement can favor mappings that remain close to the source distribution. CyclOT instead achieves stronger target-domain alignment while preserving the overall structure of the input samples.

Figure~\ref{fig:training_curve_NOT} shows that Neural OT becomes unstable as training progresses and deteriorates after reaching an intermediate optimum. In contrast, Figure~\ref{fig:training_curve_Ours} shows more stable training dynamics. Under our BCE-type GAN objective, the balanced discriminator equilibrium corresponds to a generator loss of $\log2$ and a discriminator loss of $2\log2$ under the summed-BCE convention. In addition, the transport cost curve shows that CyclOT converges smoothly during training.

\begin{figure}[t]
    \centering
    \subfigure[Training dynamics of Neural OT on the CelebA female-to-male translation task. Here, $T$ denotes the learned transport map, and $f$ denotes the learned Kantorovich potential.]{
        \includegraphics[width=0.47\textwidth]{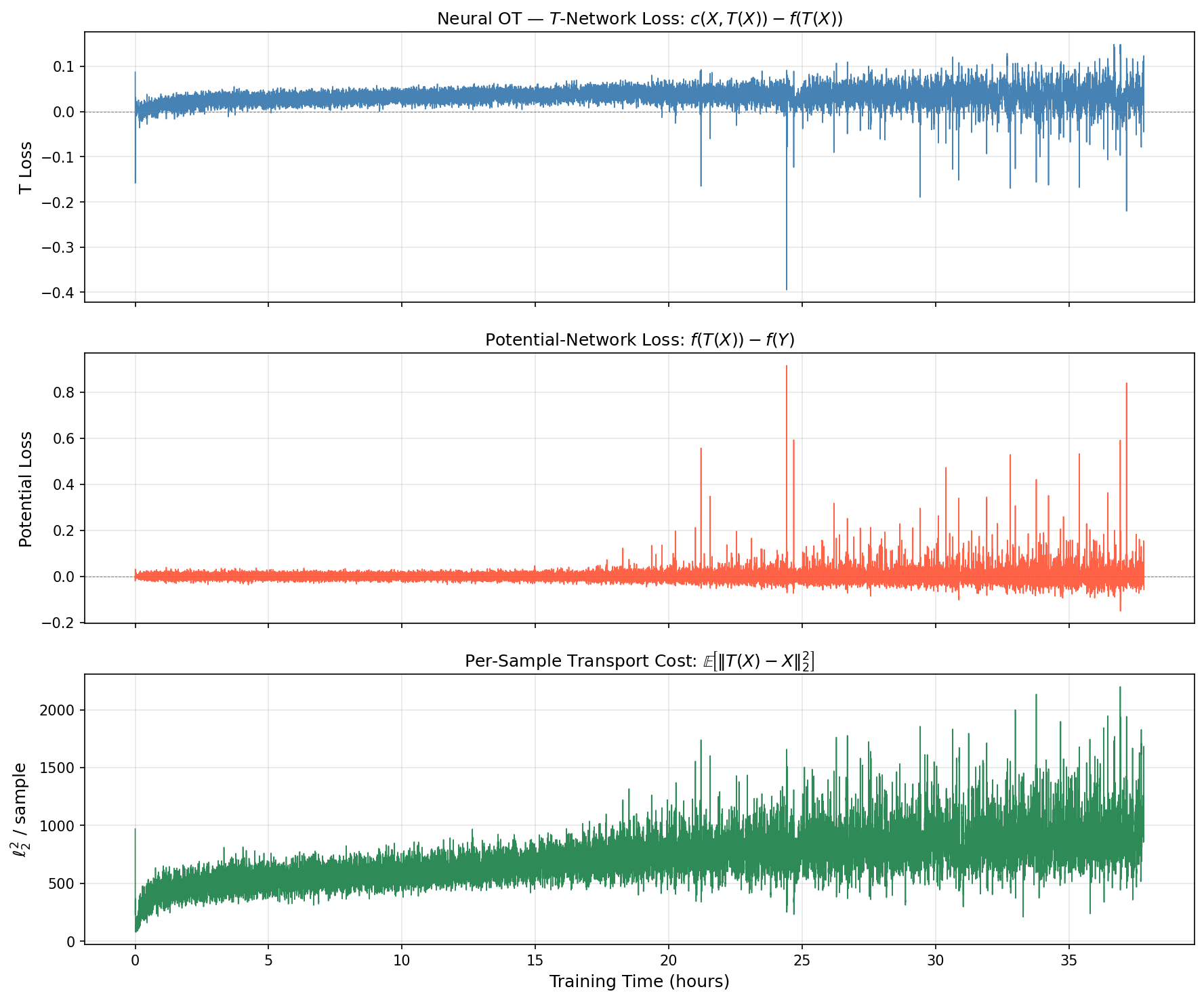}
        \label{fig:training_curve_NOT}
    }
    \hfill
    \subfigure[Training dynamics of CyclOT on the CelebA female-to-male translation task.]{
        \includegraphics[width=0.47\textwidth]{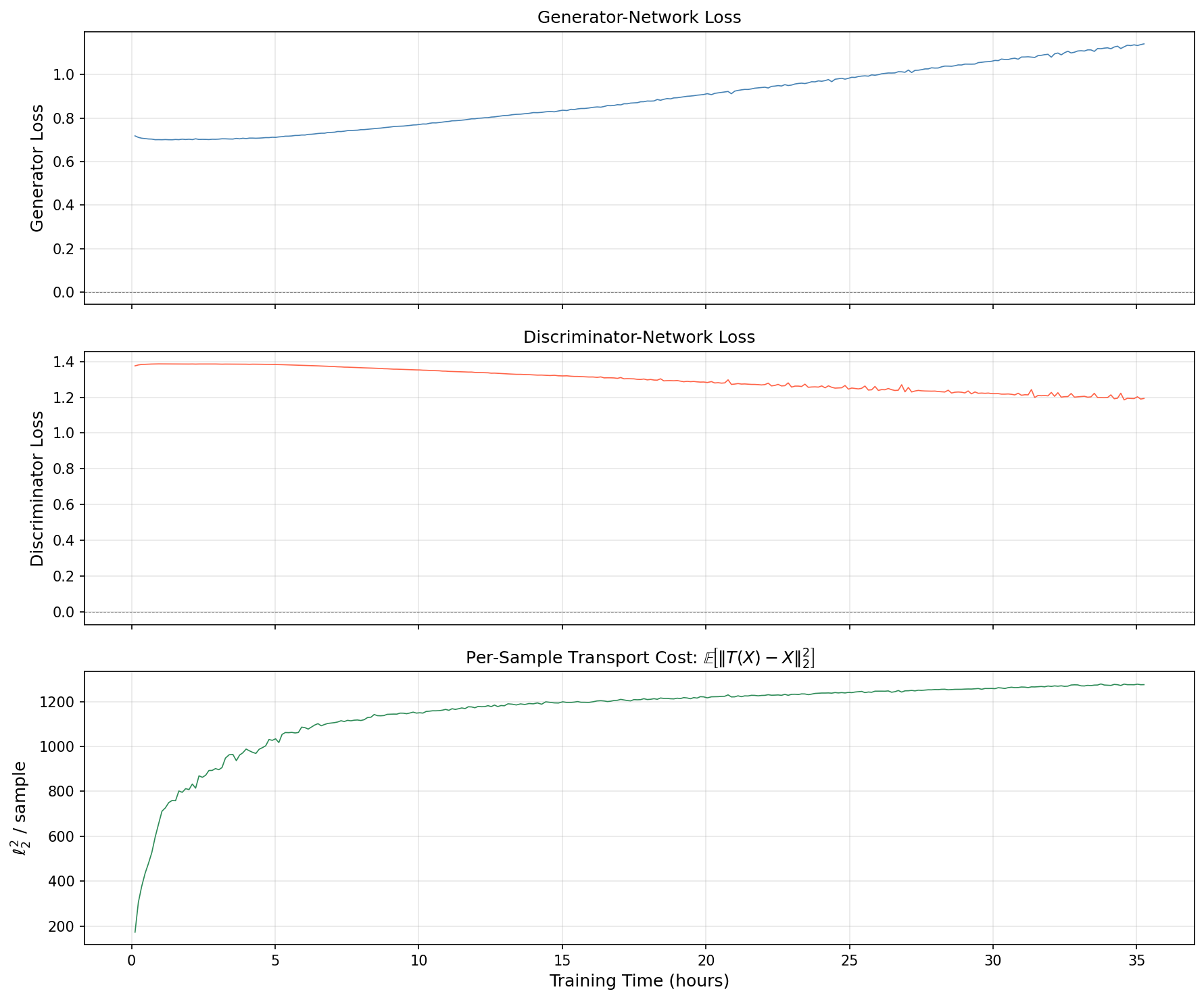}
        \label{fig:training_curve_Ours}
    }

    \caption{Training dynamics on the CelebA female-to-male translation task for Neural OT and CyclOT.}
    \label{fig:training_curves_celebA}
\end{figure}

\begin{table}[t]
    \centering
    \small
    \setlength{\tabcolsep}{4pt}
    \begin{tabular}{lccccccc}
    \toprule
        Method
        & Cost $\downarrow$
        & FID $\downarrow$
        & TDR $\uparrow$
        & TPS $\uparrow$
        & $|\mathrm{Enrich\mbox{-}k100}-0.5| \downarrow$
        & FIE $\downarrow$
        & MIE $\downarrow$\\
        \midrule
        ICNN OT
        & 2266.7034
        & 305.2227
        & 46.0\%
        & $0.4851 \pm 0.25$
        & 0.3759
        & 3.2M
        & 35.2M\\
        Neural OT
        & \textbf{687.9927}
        & \textbf{17.8291}
        & 79.9\%
        & $0.7447 \pm 0.27$
        & \textbf{0.0228}
        & 17.6M
        & 1.6M\\
        CyclOT
        & 1069.7815
        & 25.4478
        & \textbf{92.0\%}
        & \textbf{0.8602 $\pm$ 0.20}
        & 0.0357
        & 2.73M
        & 2.73M\\
        \bottomrule
    \end{tabular}
    \caption{Quantitative comparison between Neural OT and CyclOT on the CelebA female-to-male translation task.}
    \label{tab:celeba_quantitative_comparison}
\end{table}

\begin{table}[t]
    \centering
    \small
    \setlength{\tabcolsep}{6pt}
    \begin{tabular}{lccc}
        \toprule
        Input set
        & TPS
        & Predicted male
        & Predicted female \\
        \midrule
        Real source female
        & 0.1278 $\pm$ 0.23
        & 479 (9.6\%)
        & 4521 (90.4\%) \\
        Real target male
        & 0.8893 $\pm$ 0.21
        & 4629 (92.6\%)
        & 371 (7.4\%) \\
        \bottomrule
    \end{tabular}
    \caption{
        FairFace classifier baseline results on real CelebA source-female
        and target-male images. Each evaluation set contains 5,000 images.
        The classifier correctly predicts 90.4\% of the source-female images
        as female and 92.6\% of the real target-male images as male.
    }
    \label{tab:fairface_baseline}
\end{table}

\begin{figure}[t]
    \centering

    \subfigure[Female-to-male-to-female cycle.]{
        \includegraphics[width=\textwidth]{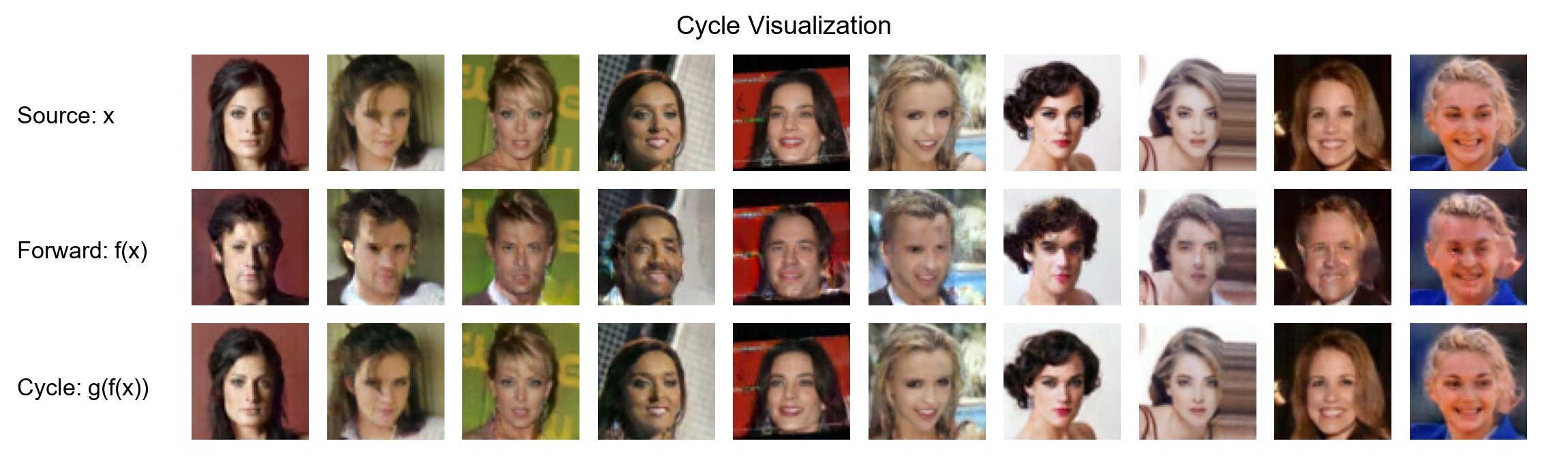}
        \label{fig:face_cycle_f2m}
    }

    \vspace{0.5em}

    \subfigure[Male-to-female-to-male cycle.]{
        \includegraphics[width=\textwidth]{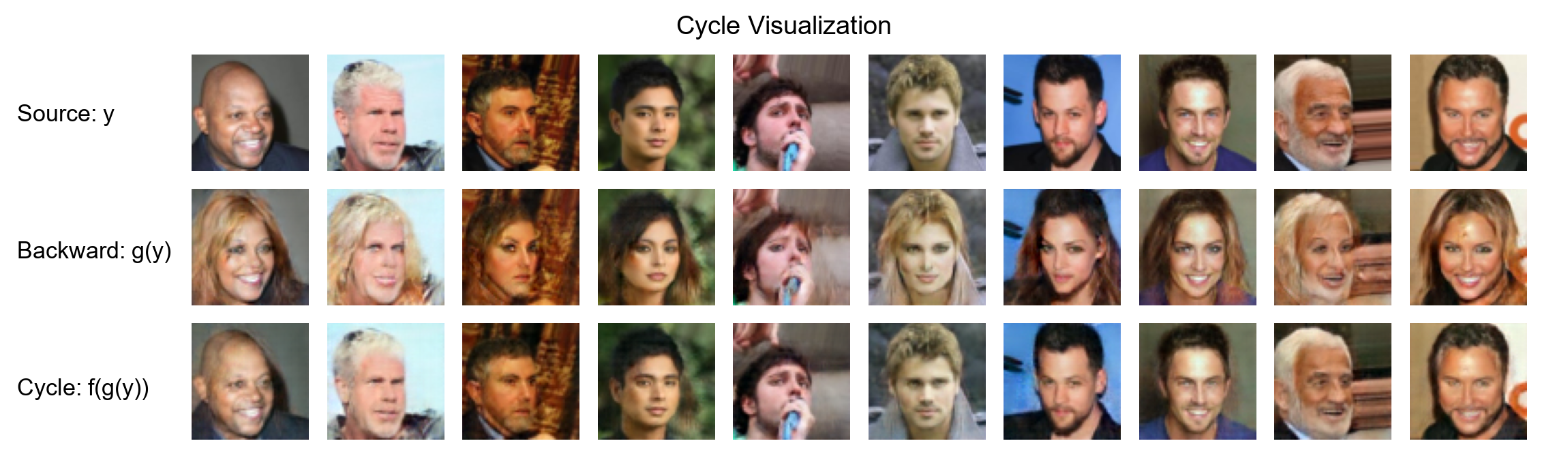}
        \label{fig:face_cycle_m2f}
    }

    \caption{Cycle visualizations on CelebA. The first block shows source female samples $x$, forward mapped samples $f_\theta(x)$, and cycle reconstructions $g_\phi(f_\theta(x))$. The second block shows source male samples $y$, backward mapped samples $g_\phi(y)$, and cycle reconstructions $f_\theta(g_\phi(y))$.}
    \label{fig:face_cycle}
\end{figure}

\begin{figure}[t]
    \centering
    \includegraphics[width=\textwidth]{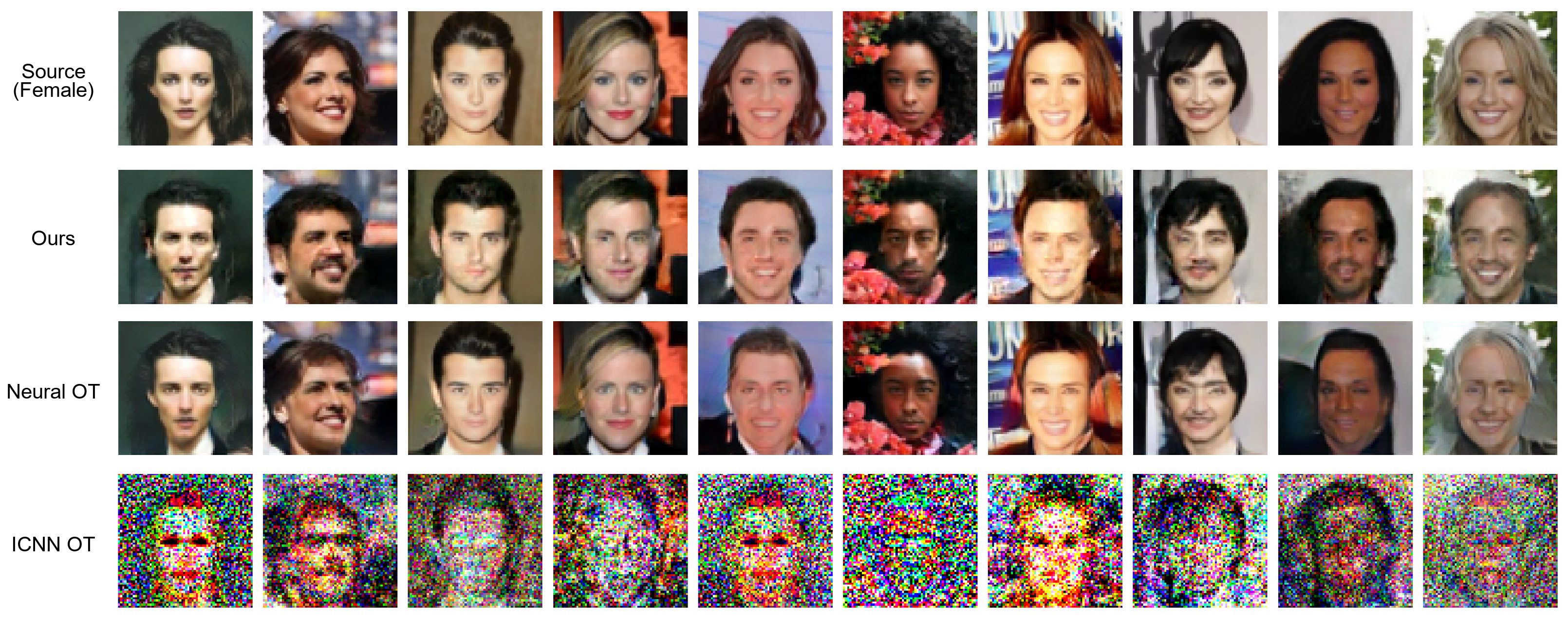}
    \caption{Comparison results for female-to-male face translation on CelebA. All three methods are trained for 4 hours under the same experimental setting.}
    \label{fig:face_comparison}
\end{figure}

\begin{figure}[t]
    \centering
    \includegraphics[width=\textwidth]{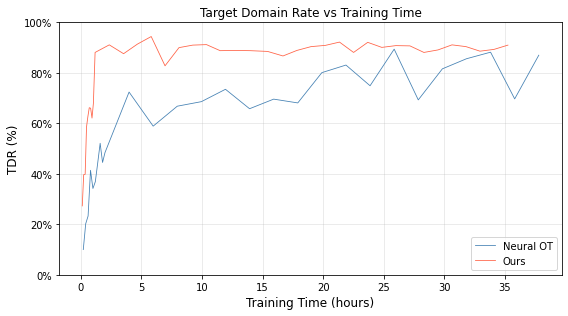}
    \caption{Target Domain Rate (TDR) versus training time for Neural OT and CyclOT.}
    \label{fig:TDR_time}
\end{figure}

\subsection{Single-cell data: transport on scientific manifolds}
\label{subsec:cell}

\paragraph{Setup.}
We evaluate the methods on a single-cell 4i dataset from Bunne et al.~\citep{bunne2023learning} using the trametinib perturbation experiment. We take cells from the control condition as the source domain and trametinib-treated cells as the target domain. Each cell is represented by the $48$ molecular features, and no cell-level correspondence between control and treated cells is assumed. The goal is therefore to learn an unpaired transport map from the control population to the trametinib-treated population. The forward and backward maps are implemented as residual MLPs in the single-cell feature space.

\paragraph{Evaluation.}
We evaluate the single-cell transport task using metrics that capture both
transport regularity and target-population matching. Specifically, we report the \emph{transport cost}, \emph{MMD loss}, and \emph{Enrich-k100}. Detailed definitions and implementation details for these metrics are provided in the appendix \ref{app:evaluation_metrics}.

\paragraph{Results.}
Table~\ref{tab:cell_4i_trametinib} summarizes the control-to-trametinib transport results on the 4i dataset. Among the learned methods, CyclOT achieves the lowest transport cost, the lowest MMD, and the highest signature correlation, while also attaining competitive local neighborhood mixing. ICNN OT achieves the best Enrich-k100 score. Rectified flow and OT-guided flow matching exhibit substantially larger transport costs together with weaker target-population matching. Sinkhorn hard matching is included as a finite-sample transductive reference and does not define a learned global map.

\begin{table}[t]
\centering
\small
\setlength{\tabcolsep}{4pt}
\begin{tabular}{lcccc}
\toprule
Method 
& Transport Cost $\downarrow$ 
& MMD $\downarrow$
& $|\mathrm{Enrich\mbox{-}k100}-0.5| \downarrow$
& Signature Corr $\uparrow$ \\
\midrule
Sinkhorn
& 5.2320 
& -- 
& -- 
& --  \\

CyclOT 
& \textbf{2.8352} 
& \textbf{0.002472}
& 0.00729
& \textbf{0.99093}\\

Neural OT 
& 3.1925
& 0.002879
& 0.01835
& 0.98936 \\

ICNN OT 
& 3.2515 
& 0.002623
& \textbf{0.00456}
& 0.98698 \\

Rectified flow
& 9.4423
& 0.004876
& 0.02044
& 0.94781 \\

OT-CFM
& 7.0150
& 0.012698
& 0.11412
& 0.86522 \\

\bottomrule
\end{tabular}
\caption{
4i trametinib single-cell perturbation transport. 
}
\label{tab:cell_4i_trametinib}
\end{table}

\subsection{Chest X-ray translation: transport on medical images}
\label{subsec:cxr}

\begin{table}[ht]
    \centering
    \small
    \setlength{\tabcolsep}{4pt}
    \begin{tabular}{lcccc}
    \toprule
        Method
        & Cost $\downarrow$
        & $|\mathrm{Enrich\mbox{-}k100}-0.5| \downarrow$
        & TPS $\uparrow$ 
        & \makecell{Training Time\\ (hours:minutes) $\downarrow$} \\
        \midrule
        Sinkhorn
        & 5643.2637 & -- & -- & \textbf{$<$0:01} \\
        ICNN OT
        & 65095.5671 & 0.4984 & 0.1513 $\pm$ 0.0409 & 1:01 \\
        Neural OT
        & 1984.1612 & 0.3246 & \textbf{0.4762 $\pm$ 0.1860} & 19:15 \\
        CyclOT
        & \textbf{1717.2522} & \textbf{0.3169} & 0.4098 $\pm$ 0.1873 & 8:41 \\
        \bottomrule
    \end{tabular}
    \caption{Quantitative comparison between ICCN OT, Neural OT, and CyclOT on the chest x-ray healthy-to-pneumonia transition task.}
    \label{tab:chest_quantitative_comparison}
\end{table}

\begin{table}[t]
    \centering
    \small
    \setlength{\tabcolsep}{6pt}
    \begin{tabular}{lc}
        \toprule
        Input set
        & TPS \\
        \midrule
        Real source healthy
        & 0.1876 $\pm$ 0.1382 \\
        Real target pneumonia
        & 0.6254 $\pm$ 0.1902 \\
        \bottomrule
    \end{tabular}
    \caption{
        Pneumonia classifier baseline results on real source (healthy) and target (pneumonia) images.
    }
    \label{tab:pneumonia_baseline}
\end{table}

\paragraph{Setup.}
We further evaluate CyclOT on chest x-ray images to test whether the proposed forward--backward transport framework can operate on high-dimensional medical data. Compared with natural image attribute translation, chest radiographs impose a stronger
structure-preservation requirement: a meaningful transport map should modify the domain-specific radiographic appearance while preserving the underlying anatomical layout, including the lung fields, ribs, clavicles, and global patient pose.

We use data from the RSNA Pneumonia Detection Challenge\footnote{Data is available to download here: https://nihcc.app.box.com/v/ChestXray-NIHCC} for training \cite{wang2017chestx}. The dataset includes a total of 7,106 chest x-rays from patients with pneumonia and 9,790 x-rays from patients without pneumonia. We resize all images to $256\times256$ and use a random subset of 3,000 images from each class for training, and repeat the training three times for each method. The full dataset is used for evaluation.

\paragraph{Evaluation.}
To evaluate the utility of our and other methods at converting x-ray images between the two classes we use a classification model developed by Weng et al.~\cite{chexnetmodel} on the dataset provided by Rajpurkar et al.~\cite{chexnetpaper}. We report the transport cost, Enrich-k100, TPS, and total training time of each method. The results in Table \ref{tab:chest_quantitative_comparison} show the mapping from healthy images to ones with pneumonia. The results for the mapping in the other direction can be found in \ref{app:pneumonia}. 

\paragraph{Results.}
Figure~\ref{fig:cxr_transport} shows representative qualitative results. The first row shows source chest X-ray images $x$, the second row shows transported images $f_\theta(x)$, and the third row shows cycle reconstructions $g_\phi(f_\theta(x))$. The transported samples exhibit visible changes in global radiographic appearance, including contrast, opacity, and local texture patterns, while largely preserving the coarse anatomical geometry of the input images. In particular, the position of the thoracic cavity, lung fields, shoulders, and mediastinal structure remains aligned with the corresponding source image. The cycle reconstructions recover the main image structure and much of the source-domain appearance, suggesting that the learned map is not simply collapsing different inputs to a small set of target-like images.

\begin{figure}[t]
    \centering
    \includegraphics[width=\linewidth]{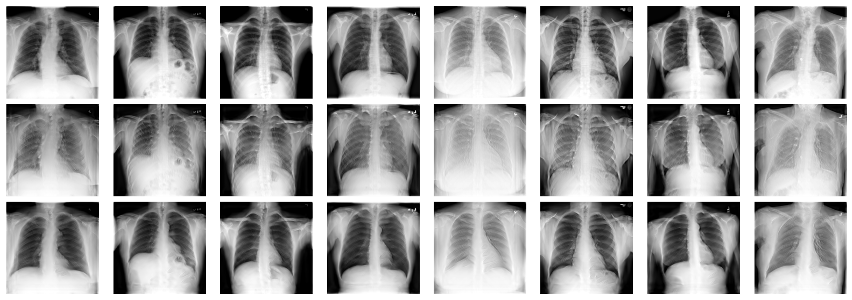}
    \caption{Chest x-ray transport results for CyclOT. The first row shows source (healthy) radiographs $x$, the second row shows the transported samples $f_\theta(x)$, and the third row shows the cycle reconstructions $g_\phi(f_\theta(x))$. The learned transport changes the target-domain radiographic appearance while preserving the coarse anatomical layout.}
    \label{fig:cxr_transport}
\end{figure}

\begin{figure}[t]
    \centering
    \subfigure[Images created by a model trained with ICNN OT.]{
        \includegraphics[width=0.47\textwidth]{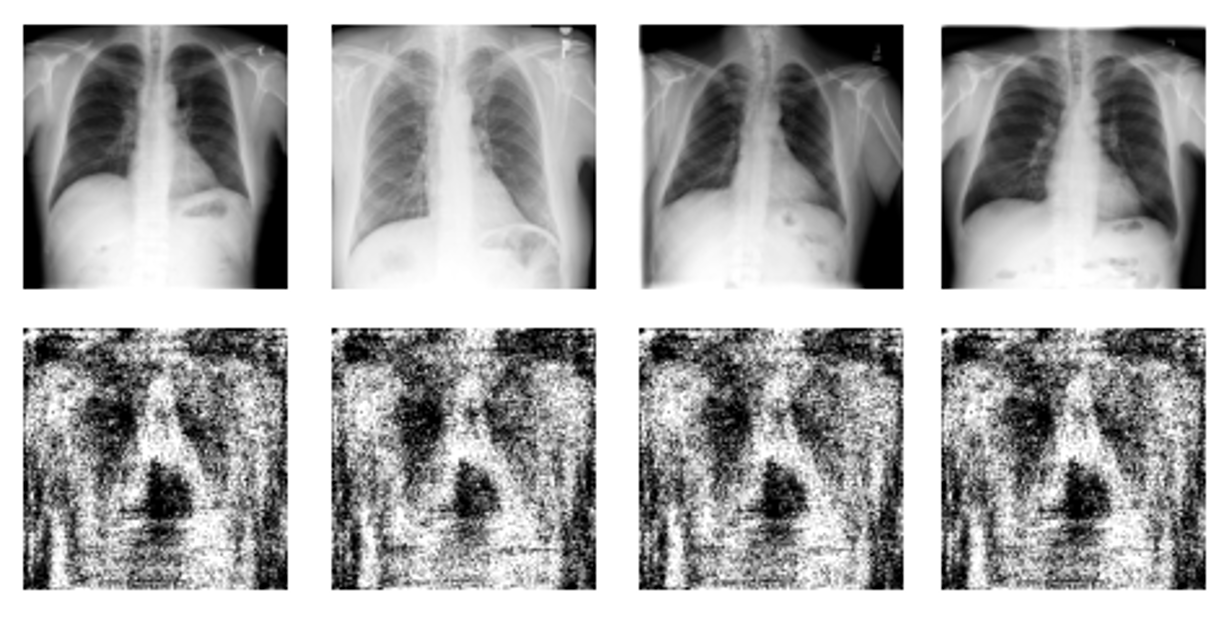}
        \label{fig:icnn_example}
    }
    \hfill
    \subfigure[Images created by a model trained with Neural OT.]{
        \includegraphics[width=0.47\textwidth]{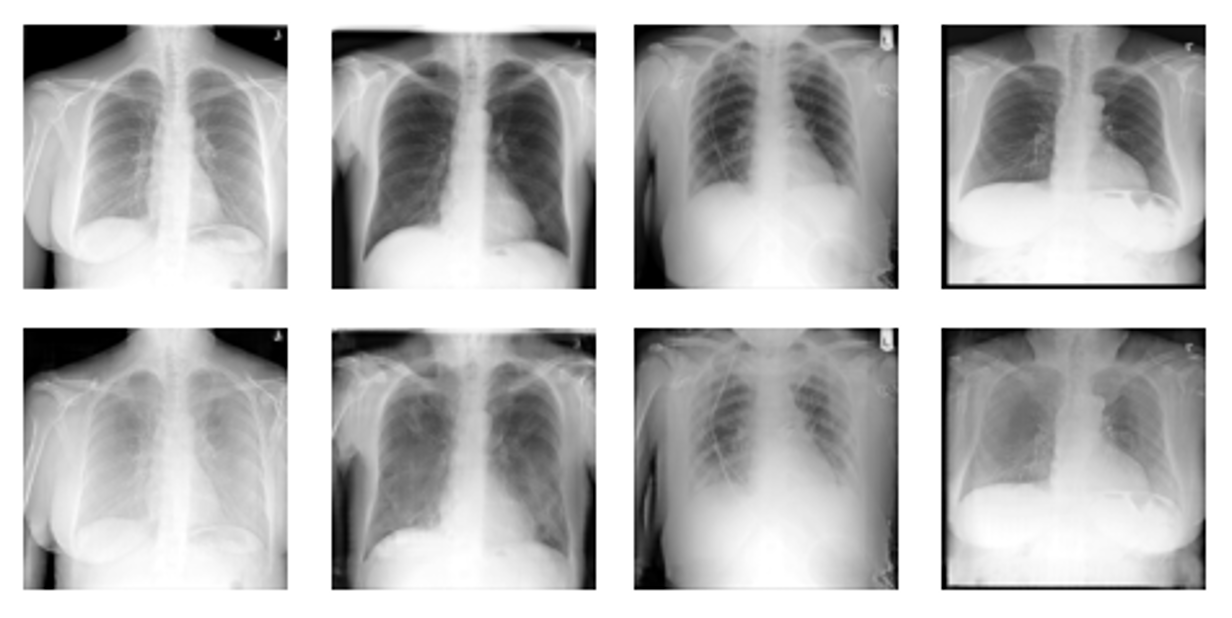}
        \label{fig:neural_example}
    }

    \caption{Chest x-ray transport results for other tested methods. The first row shows source (healthy) radiographs $x$ and the second row shows the transported samples $f_\theta(x)$.}
    \label{fig:cxr_other_methods}
\end{figure}

\subsection{Takeaway}

Taken together, the experiments illustrate a consistent pattern. Entropic OT remains a strong classical baseline for finite-sample marginal alignment, especially in low dimension, but it does not directly provide a learned global map and may require additional projection or regression steps. Neural OT provides expressive learned maps, but without explicit reverse consistency it may admit transport solutions that match target-domain labels while failing to match the full target distribution. ICNN OT encodes the Brenier structure but is difficult to optimize directly in high-dimensional pixel space. Flow-matching baselines provide flexible ODE transports, but without sufficiently informative couplings they may favor low-displacement maps that do not fully reach the target domain. CyclOT combines the flexibility of learned transport with bidirectional structural regularization, leading to maps that are not only distributionally accurate but also more stable and coherent along intermediate paths.

\clearpage
\bibliographystyle{unsrt}
\bibliography{example_paper}

\clearpage
\appendix
\section{Background \& Preliminaries}
\label{app:background}
\subsection{Flow matching \& rectified flow}
\label{subsec:fm_rf}
Flow matching learns a time-dependent vector field whose induced probability flow transports a source distribution to a target distribution~\citep{lipman2022flow}. Specifically, one considers the ODE
\[
    \frac{dZ_t}{dt}=v_\theta(Z_t,t),
    \qquad Z_0\sim\mu_0,
\]
and trains $v_\theta$ using a prescribed reference interpolation between samples from $\mu_0$ and $\mu_1$. A common choice is the linear interpolation
\[
    X_t=(1-t)X_0+tX_1,
    \qquad X_0\sim\mu_0,\quad X_1\sim\mu_1,
\]
where $(X_0,X_1)$ is sampled from a chosen coupling, often the independent coupling. The corresponding reference velocity is
\[
    u(X_0,X_1)=X_1-X_0.
\]
Flow matching then minimizes the regression objective
\[
    \mathcal{L}_{\mathrm{FM}}(\theta)
    =
    \int_0^1
    \mathbb{E}
    \left[
        \left\|
            X_1-X_0-v_\theta(X_t,t)
        \right\|^2
    \right]dt .
\]
At the population level, the minimizer over all measurable vector fields is
\[
    v^\star(z,t)
    =
    \mathbb{E}
    \left[
        X_1-X_0
        \mid
        X_t=z
    \right].
\]
The reference marginals $\mu_t=\mathcal L(X_t)$ then satisfy the continuity equation
\[
    \partial_t \mu_t+\nabla\cdot(\mu_t v_t^\star)=0
\]
in the weak sense. If this continuity equation is well posed for $v^\star$, then the ODE driven by $v^\star$ has the same time marginals as the reference interpolation.

Rectified flow applies this projection iteratively. After learning a velocity field, it constructs new endpoint pairings from the learned flow and retrains on the resulting trajectories. This reflow procedure preserves the endpoint marginals while progressively straightening the learned paths.

\subsection{Independence constraints \& Wasserstein barycenters.}
\label{subsec:bary_indep}
Following Xu and Strohmer~\cite{xu2023fair}, we recall the connection between independence-constrained $L^2$ projection and Wasserstein barycenters. Let $Z\in\{0,1\}$ be a group label with
\[
    \mathbb{P}(Z=k)=\lambda_k,
    \qquad
    \mu_k=\mathcal{L}(X\mid Z=k),
    \qquad k\in\{0,1\}.
\]
Consider the independence-constrained projection problem
\[
    \inf_{\bar X:\,\bar X\perp Z}
    \mathbb{E}\left[\|X-\bar X\|^2\right].
\]
This constraint means that the conditional law of $\bar X$ is the same for both groups. Equivalently, there exists a common distribution $\nu$ such that
\[
    \mathcal{L}(\bar X\mid Z=0)
    =
    \mathcal{L}(\bar X\mid Z=1)
    =
    \nu .
\]

For a fixed common law $\nu$, the optimal way to map the $k$-th group distribution $\mu_k$ to $\nu$ under quadratic distortion has cost
\[
    W_2^2(\mu_k,\nu).
\]
Therefore, the independence-constrained projection problem reduces to choosing the best common law:
\[
    \inf_{\bar X:\bar X\perp Z}
    \mathbb{E}\|X-\bar X\|^2
    =
    \inf_{\nu\in\mathcal{P}_2(\mathbb{R}^d)}
    \sum_{k=0}^1
    \lambda_k W_2^2(\mu_k,\nu).
\]
Any minimizer $\nu^\star$ of the right-hand side is the quadratic Wasserstein barycenter of the group-conditional distributions~\cite{xu2023fair}.

This result shows that an independence constraint can convert an $L^2$ projection problem into a Wasserstein barycenter problem: independence enforces a shared output law, while the quadratic projection cost measures the optimal transport cost from each group distribution to that shared law.

\section{Proofs}

\subsection{Proof of Lemma~\ref{lem:joint_cycle_approximation}}\label{a:proof_joint_cycle_approximation}

\begin{proof}
Since \(\mathcal X\) is nonempty, closed, and convex, its Euclidean metric
projection
\[
\Pi_{\mathcal X}:\mathbb R^d\to\mathcal X
\]
is well defined and \(1\)-Lipschitz. Let
\[
E_0:=\{x\in\mathcal X:S(T(x))=x\}.
\]
Then \(\mu_0(E_0)=1\). Fix \(0<\eta<1/3\). By Lusin's theorem, there
exist compact sets \(C_0,C_1\subset\mathcal X\) such that $\mu_0(C_0)>1-\eta$ and $\mu_1(C_1)>1-\eta$ with \(T|_{C_0}\) and \(S|_{C_1}\) are continuous. Since
\(T_\#\mu_0=\mu_1\),
\[
\mu_0(T^{-1}(C_1))=\mu_1(C_1)>1-\eta.
\]
Consequently,
\[
\mu_0\bigl(C_0\cap E_0\cap T^{-1}(C_1)\bigr)>1-2\eta.
\]
By inner regularity, we can choose a compact set $K_0\subset C_0\cap E_0\cap T^{-1}(C_1)$ such that
\[
\mu_0(K_0)>1-3\eta,
\]
and define $K_1:=T(K_0)$. Because \(T|_{K_0}\) is continuous, \(K_1\) is compact. Also, since $K_0 \subset T^{-1}(C_1)$, we have \(K_1\subset C_1\). Moreover,
\[
\mu_1(K_1)
=
\mu_0(T^{-1}(K_1))
\geq
\mu_0(K_0)
>
1-3\eta.
\]
Since \(K_0\subset E_0\), $S\circ T=\operatorname{Id}$ on $K_0$. If \(y\in K_1\), then \(y=T(x)\) for some \(x\in K_0\), and hence
\[
T(S(y))=T(S(T(x)))=T(x)=y.
\]
Thus, $T\circ S=\operatorname{Id}$ on $K_1$. Applying the Tietze extension theorem coordinatewise to obtain continuous maps
\[
A_\eta,B_\eta:\mathcal X\to\mathbb R^d
\]
such that $A_\eta=T$ on $K_0$ and $B_\eta=S$ on $K_1$. Define
\[
\widetilde T_\eta:=\Pi_{\mathcal X}\circ A_\eta,
\qquad
\widetilde S_\eta:=\Pi_{\mathcal X}\circ B_\eta.
\]
Then $\widetilde T_\eta,\widetilde S_\eta
\in C(\mathcal X;\mathcal X)$ and the projection fixes the prescribed values, so $\widetilde T_\eta=T$ on $K_0$ and $\widetilde S_\eta=S$ on $K_1$. By uniform density, for every \(e>0\), there exist $T_{\eta,e}\in\mathcal F$ and $S_{\eta,e}\in\mathcal G$ such that
\[
\|T_{\eta,e}-\widetilde T_\eta\|_\infty<e,
\qquad
\|S_{\eta,e}-\widetilde S_\eta\|_\infty<e.
\]
All these maps are \(\mathcal X\)-valued, so their compositions remain in \(\mathcal X\). For \(H\in C(\mathcal X;\mathbb R^d)\), let
\[
\omega_H(t)
:=
\sup\{\|H(u)-H(v)\|:
u,v\in\mathcal X,\ \|u-v\|\leq t\}.
\]
Compactness of \(\mathcal X\) implies
\(\omega_H(t)\to0\) as \(t\downarrow0\). For \(x\in K_0\),
\[
\begin{aligned}
\|S_{\eta,e}(T_{\eta,e}(x))-x\|
&\leq
\|S_{\eta,e}(T_{\eta,e}(x))
-\widetilde S_\eta(T_{\eta,e}(x))\|\\
&\quad+
\|\widetilde S_\eta(T_{\eta,e}(x))
-\widetilde S_\eta(T(x))\|\\
&\quad+
\|\widetilde S_\eta(T(x))-x\|\\
&\leq
e+\omega_{\widetilde S_\eta}(e) + 0.
\end{aligned}
\]
The final term vanishes because \(T(x)\in K_1\). Similarly, for
\(y\in K_1\), we obtain
\[
\|T_{\eta,e}(S_{\eta,e}(y))-y\|
\leq
e+\omega_{\widetilde T_\eta}(e).
\]

Set $D_{\mathcal X}:=\operatorname{diam}(\mathcal X)$. On the complementary sets with measure at most $3\eta$, all points and compositions lie in \(\mathcal X\), so the corresponding errors are bounded by \(D_{\mathcal X}\). It follows that
\[
\begin{aligned}
\mathcal C(T_{\eta,e},S_{\eta,e})
&\leq
\bigl(e+\omega_{\widetilde S_\eta}(e)\bigr)^2
+
\bigl(e+\omega_{\widetilde T_\eta}(e)\bigr)^2
+6D_{\mathcal X}^2\eta,
\end{aligned}
\]
and
\[
\|T_{\eta,e}-T\|_{L^2(\mu_0)}^2
\leq
e^2+3D_{\mathcal X}^2\eta,
\qquad
\|S_{\eta,e}-S\|_{L^2(\mu_1)}^2
\leq
e^2+3D_{\mathcal X}^2\eta.
\]

Choose \(\eta_j\downarrow0\) with \(0<\eta_j<1/3\). For each \(j\), perform the preceding
construction with \(\eta=\eta_j\), and then choose \(e_j>0\) so that
\[
e_j<\frac1j,
\qquad
e_j+\omega_{\widetilde T_{\eta_j}}(e_j)<\frac1j,
\qquad
e_j+\omega_{\widetilde S_{\eta_j}}(e_j)<\frac1j.
\]
Setting $(T_j,S_j):=(T_{\eta_j,e_j},S_{\eta_j,e_j})$ and using the preceding estimates proves both conclusions. We are done.
\end{proof}

\subsection{Proof of Appendix~\ref{prop:cycle_resolution_invertibility}}\label{a:proof_cycle_resolution_invertibility}

\begin{proof}
Markov's inequality gives
\[
\mathbb P_{X\sim\mu_0}
\bigl(\|g(f(X))-X\|>\varepsilon\bigr)
\leq
\frac{1}{\varepsilon^2}
\int_{\mathcal X}
\|g(f(x))-x\|^2\,d\mu_0(x)
\]
and
\[
\mathbb P_{Y\sim\mu_1}
\bigl(\|f(g(Y))-Y\|>\varepsilon\bigr)
\leq
\frac{1}{\varepsilon^2}
\int_{\mathcal X}
\|f(g(y))-y\|^2\,d\mu_1(y).
\]
Adding these inequalities yields
\[
p_\varepsilon(f,g)
\leq
\frac{\mathcal C(f,g)}{\varepsilon^2}.
\]
The bound by \(2\) follows because \(p_\varepsilon(f,g)\) is the sum of
two probabilities.
\end{proof}

\section{Evaluation Metrics \& Experimental Details}
\label{app:experimental_details}
\subsection{Evaluation metrics}
\label{app:evaluation_metrics}
\subsubsection{Transport cost evaluation}
\paragraph{Transport Cost}
Transport cost in task MNIST, CelebA and Chest X-ray uses average per-sample squared $L_2$ distance in pixel space, computed as the sum of the squared pixel-wise differences between each source image and its transported image, averaged over all samples. 

\[
\mathrm{Transport\ Cost}
=
\frac{1}{N}
\sum_{i=1}^{N}
\left\|T(x_i)-x_i\right\|_2^2
=
\frac{1}{N}
\sum_{i=1}^{N}
\sum_{c=1}^{C}
\sum_{h=1}^{H}
\sum_{w=1}^{W}
\left(T(x_i)_{c,h,w}-x_{i,c,h,w}\right)^2.
\]
\begin{itemize}
    \item $N$ denotes the total number of images used for evaluation;
    \item $C$ denotes the number of image channels;
    \item $H$ denotes the image height;
    \item $W$ denotes the image width.
\end{itemize}

\subsubsection{Marginal matching quality}
\paragraph{Target-Domain Rate (TDR).}
 TDR measures the percentage of transported samples that are classified as belonging to the target domain, with a higher TDR indicating more successful domain transfer. For MNIST, we train a classifier on the full MNIST training set and evaluate it on the corresponding test set, obtaining a baseline classification accuracy of $0.9905$. For CelebA, we use the FairFace classifier \citep{DBLP:journals/corr/abs-1908-04913}, which is trained on a separate face dataset containing 108,501 images and does not use CelebA as its training data. 

\paragraph{Target Probability Score (TPS).}
TPS measures the average probability(confidence) that the classifier assigns to the target-domain classes. 

For the MNIST experiment, the set of target-domain labels \[ \mathcal{Y}_{\mathrm{target}} = \{5,6,7,8,9\}. \] For each transported sample $T(x_i)$, the classifier outputs the class probabilities \[ p_{\theta}\bigl(y=j \mid T(x_i)\bigr), \qquad j \in \{0,\ldots,9\}. \] The probability mass assigned to the target domain for the $i$-th transported sample is \[ q_i = \sum_{j \in \mathcal{Y}_{\mathrm{target}}} p_{\theta}\bigl(y=j \mid T(x_i)\bigr). \] TPS is then defined as \[ \mathrm{TPS} = \frac{1}{N} \sum_{i=1}^{N} \sum_{j \in \mathcal{Y}_{\mathrm{target}}} p_{\theta}\bigl(y=j \mid T(x_i)\bigr), \] where $N$ denotes the total number of transported samples. A higher TPS indicates that the classifier is more confident in assigning the transported samples to the target domain.

\paragraph{Fréchet Inception Distance (FID).}
FID~\cite{heusel2017gans} measures the discrepancy between the distribution of transported samples and that of real target-domain samples in a pretrained feature space. For MNIST, transported samples and real target samples are processed by the feature extractor of a pretrained MNIST classifier, and the distance is computed by: 
\[
\mathrm{FID}_{\mathrm{MNIST}}
=
\left\|
\mu_{\mathrm{map}}-\mu_{\mathrm{target}}
\right\|_2^2
+
\operatorname{Tr}\!\left(
\Sigma_{\mathrm{map}}
+
\Sigma_{\mathrm{target}}
-
2\left(
\Sigma_{\mathrm{map}}
\Sigma_{\mathrm{target}}
\right)^{1/2}
\right).
\]
where $\mu$ and $\Sigma$ denote the empirical mean and covariance matrix, respectively, of the features extracted from the samples.
For CelebA, we uses \texttt{pytorch-fid} implementation \citep{Seitzer2020FID}.

\paragraph{Class-distribution Total Vriation Distance (Class-TV).}
Class-TV measures how closely the class distribution of the generated target samples matches the class distribution of the real target dataset over digits 5-9. Let $p_k$ denote the proportion of generated samples predicted as class $k$, conditioned on the prediction belonging to the target classes $\{5,\ldots,9\}$, and let $q_k$ denote the proportion of class $k$ in the real target test set. The metric is defined as
\[
    \operatorname{TV}(p,q)
    =
    \frac{1}{2}\sum_{k=5}^{9}|p_k-q_k|.
\]
A smaller value indicates better agreement between the generated and
real target-class distributions.

\paragraph{maximum mean discrepancy (MMD).} MMD~\cite{JMLR:v13:gretton12a} measures the discrepancy between the transported distribution and the target distribution. It is computed by Gaussian RBF kernels. $\{x_i\}_{i=1}^{n}$ denote the target samples and
$\{y_j\}_{j=1}^{m}$ denote the transported samples. For a kernel
\[
k_{\gamma}(u,v)
=
\exp\!\left(-\gamma\|u-v\|_2^2\right),
\]
the empirical squared MMD is
\[
\widehat{\operatorname{MMD}}_{\gamma}^{\,2}
=
\frac{1}{n^2}\sum_{i,i'} k_{\gamma}(x_i,x_{i'})
+
\frac{1}{m^2}\sum_{j,j'} k_{\gamma}(y_j,y_{j'})
-
\frac{2}{nm}\sum_{i,j} k_{\gamma}(x_i,y_j).
\]
We evaluate this quantity over $50$ logarithmically spaced kernel
parameters $\gamma\in[10^{-3},10]$ and report their average. A smaller
MMD value indicates closer agreement between the transported and target
distributions.

\paragraph{Enrichment-k100.}
Enrichment-k100 assesses the mixing between the transported and real target samples. For each transported sample $x_i$, we identify its $k$ nearest neighbors $\mathcal{N}_k(x_i)$, in the combined transported and target sample set using Euclidean distance, excluding the query sample itself. The enrichment score is defined as
\[
\operatorname{Enrichment}_k
=
\frac{1}{nk}
\sum_{i=1}^{n}
\sum_{z\in\mathcal{N}_k(x_i)}
\mathbf{1}\{z \text{ is transported}\}.
\]
We use $k=100$. When the transported and target sets have equal sizes,
the random-mixing baseline is approximately $0.5$. Therefore, values
closer to $0.5$ indicate better local mixing, whereas larger values
indicate that transported samples cluster separately from the real
target samples.

\subsubsection{Efficiency}
\paragraph{Training Time}
Training time is measured as the elapsed training time required to reach the checkpoint used for quantitative evaluation. It includes the training loop, data loading, optimizer updates, logging, and checkpointing, but excludes post-training evaluation such as FID, classifier-based metrics, and kNN enrichment. For fairness, all methods were trained using the same batch size and learning rate, with Adam optimizer and learning rate ($2\times 10^{-4}$). MNIST, CelebA, and Cell experiments were run on a single NVIDIA Gerce RTX 5060 Laptop GPU with 8 GB memory, using PyTorch 2.8.0+cu128, CUDA 12.8, and Python 3.9.25. 

\paragraph{Female/Male Image Exposures.}
Female/Male Image Exposures count how many times real female/male images from the CelebA training set are used during training.These metrics provide a standardized measure of sample usage across methods, with fewer exposures indicating greater sample efficiency. In Neural OT, FIE and MIE differ because its multiple inner transport-map updates repeatedly access source female images without requiring additional target male images.

\newpage
{
\section{Additional Experimental Results}

\setlength{\intextsep}{10pt}

\subsection{MNIST}\label{app:mnist_extra}
\begin{figure}[H]
    \centering
    \includegraphics[width=\textwidth]{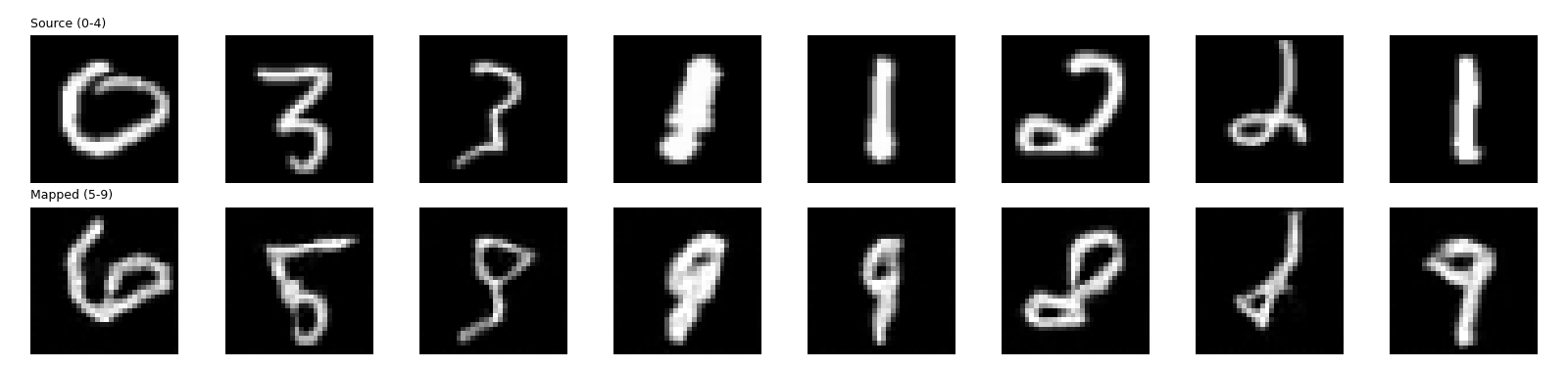}

    \vspace{2mm}

    \includegraphics[width=\textwidth]{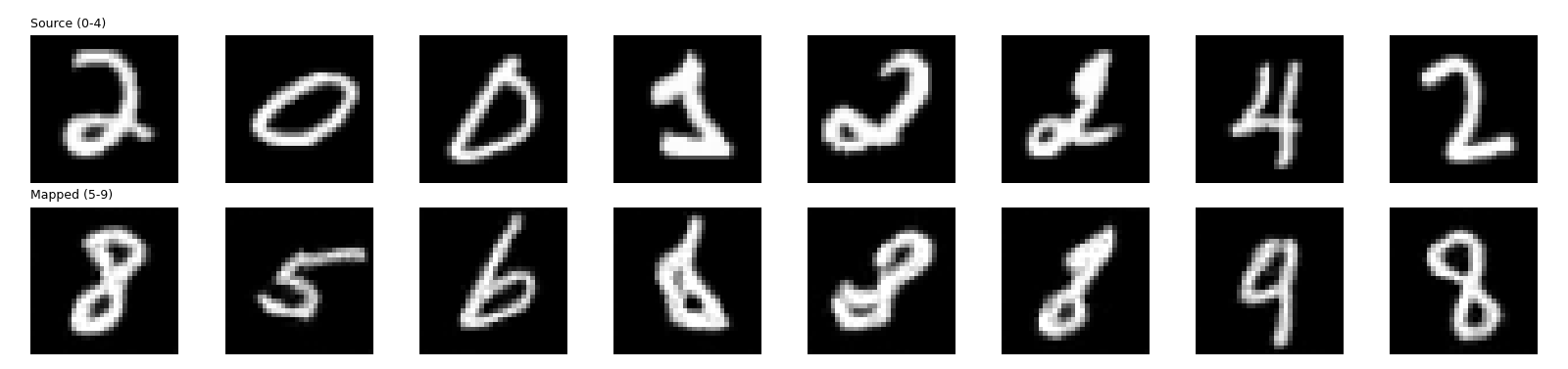}

    \vspace{2mm}

    \includegraphics[width=\textwidth]{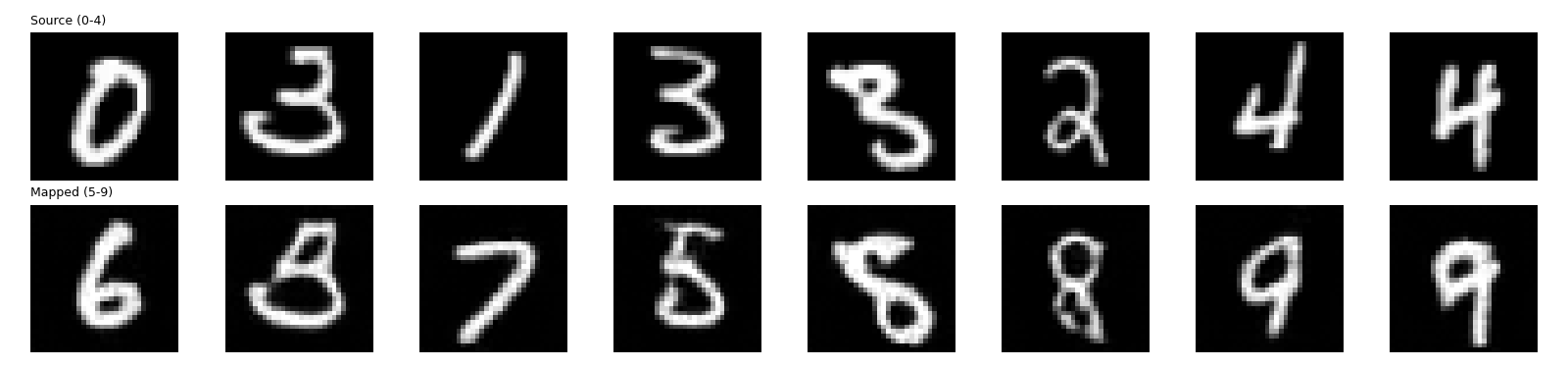}

    \vspace{2mm}

    \includegraphics[width=\textwidth]{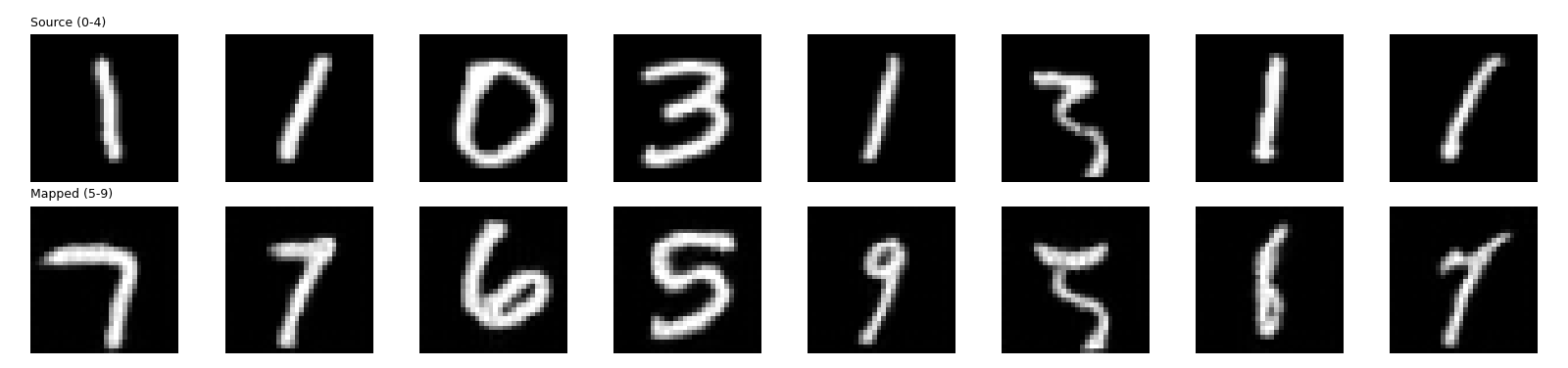}

    \caption{Additional qualitative results on the MNIST Digit Transfer (from digit 0-4 to digit 5-9) translation task.}
    \label{fig:additional_celeba_results}
\end{figure}

\subsection{CelebA}
\label{app:celeba_extra}
\begin{figure}[H]
    \centering
    \includegraphics[width=\textwidth]{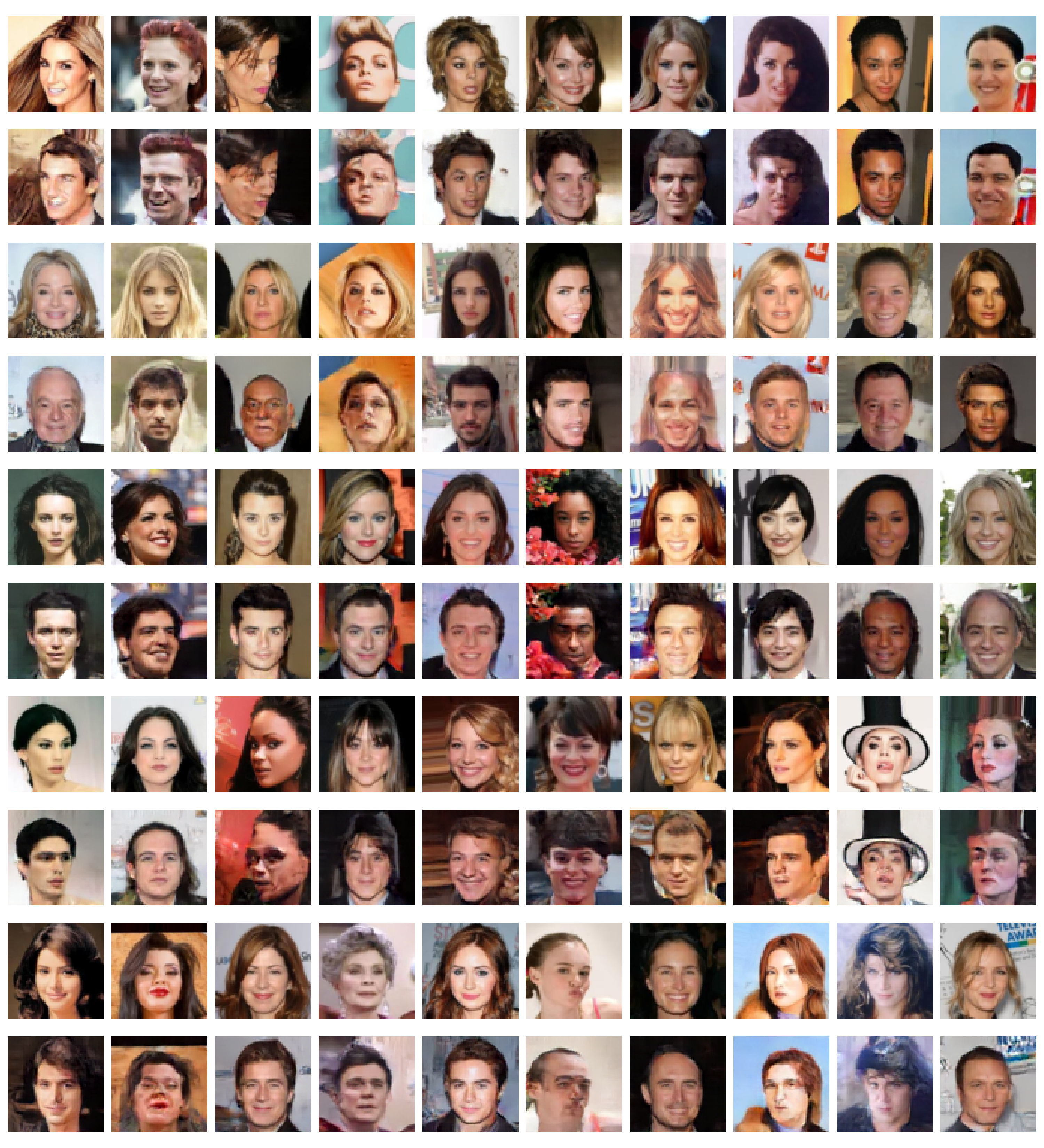}
    \caption{Qualitative results using the first 50 female images from the CelebA dataset as fixed source samples. Odd-numbered rows show the source female images, and the corresponding even-numbered rows show the mapped male images generated by CyclOT. The full image grid is displayed without post-hoc cropping.}
    \label{fig:celeba_gender_more}
\end{figure}

\begin{figure}[H]
    \centering
    \includegraphics[width=\textwidth]{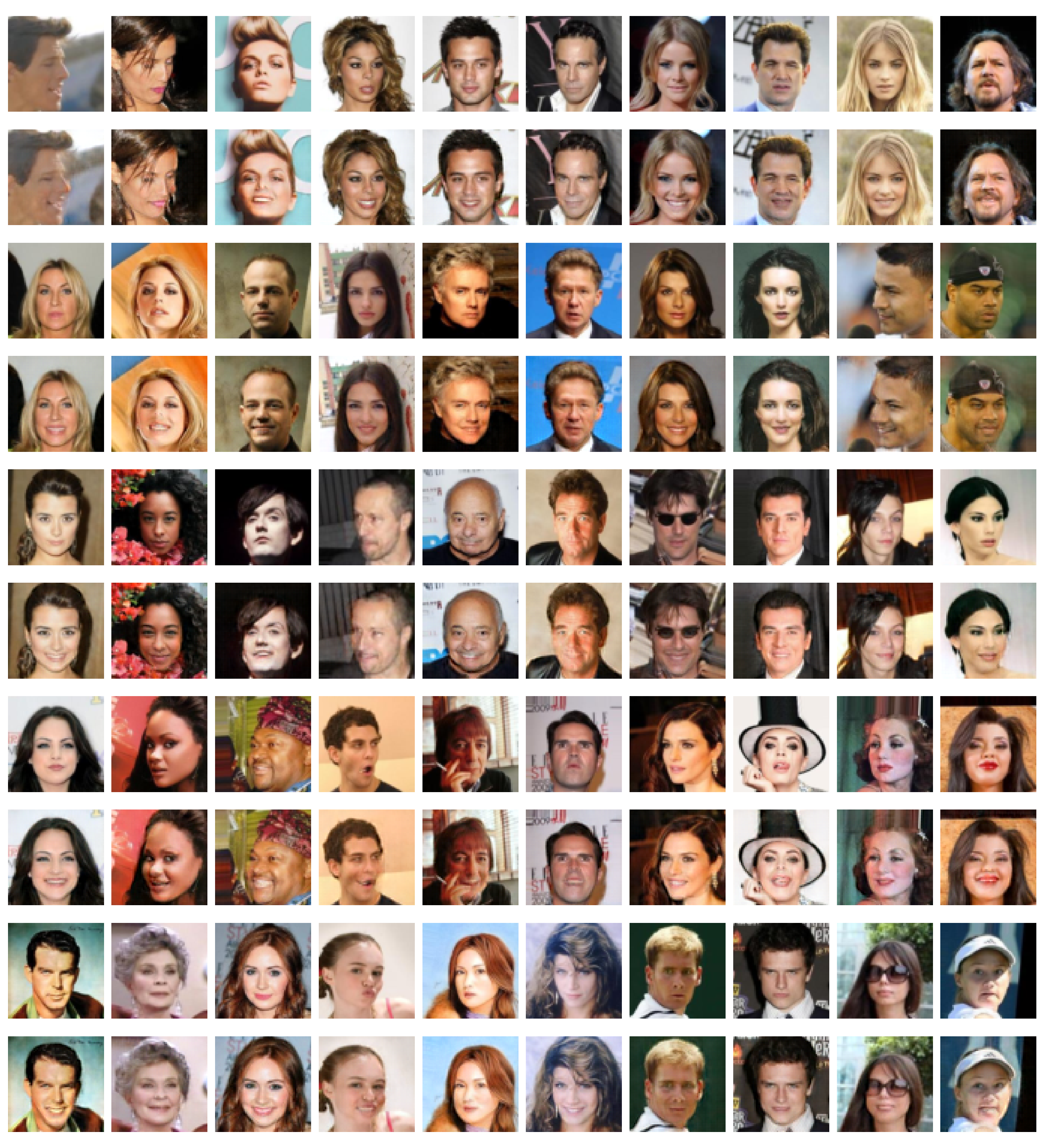}
    \caption{Qualitative results using the first 50 non-smiling images from the CelebA dataset as fixed source samples. Odd-numbered rows show the source non-smiling images, and the corresponding even-numbered rows show the mapped smiling images generated by CyclOT. The full image grid is displayed without post-hoc cropping.}
    \label{fig:celeba_smiling}
\end{figure}
}

\subsection{Pneumonia} \label{app:pneumonia}

Quantitative results in \ref{subsec:cxr} only included the transport from healthy to unhealthy images. Table \ref{tab:chest_quantitative_full_comparison} includes results in both directions, where \textit{Pos $\to$ Neg} indicates transporting from healthy to unhealthy, and \textit{Neg $\to$ Pos} indicates transporting from unhealthy to healthy. Baseline predictions for true images in each class can be found in Table \ref{tab:pneumonia_baseline}.

\begin{table}[H]
    \centering
    \small
    \setlength{\tabcolsep}{4pt}
    \begin{tabular}{llccc}
    \toprule
        Method
        & Direction
        & Cost $\downarrow$
        & $|\mathrm{Enrich\mbox{-}k100}-0.5| \downarrow$
        & TPS $\uparrow$ \\
        \midrule
        \multirow{2}{*}{ICNN OT}
        & Pos $\to$ Neg & 73142.1836 & 0.4986 & 0.1517 $\pm$ 0.0260 \\
        & Neg $\to$ Pos & 65095.5671 & 0.4984 & 0.1513 $\pm$ 0.0409 \\
        \midrule
        \multirow{2}{*}{Neural OT}
        & Pos $\to$ Neg & 1623.5243  & 0.0507 & 0.4180 $\pm$ 0.1811 \\
        & Neg $\to$ Pos & 1984.1612  & 0.3246 & 0.4762 $\pm$ 0.1860 \\
        \midrule
        \multirow{2}{*}{Ours}
        & Pos $\to$ Neg & 1472.9777 & 0.0133 & 0.4484 $\pm$ 0.1856\\
        & Neg $\to$ Pos & 1717.2522 & 0.3169 & 0.4098 $\pm$ 0.1873 \\
        \bottomrule
    \end{tabular}
    \caption{Quantitative comparison between ICNN OT, Neural OT, and our method on the chest x-ray healthy to pneumonia transition task, broken down by transport direction.}
    \label{tab:chest_quantitative_full_comparison}
\end{table}

\subsection{Ablation on cycle-consistency loss} \label{app:cycle_ablation}

\begin{figure}[H]
    \centering
    \includegraphics[width=\textwidth]{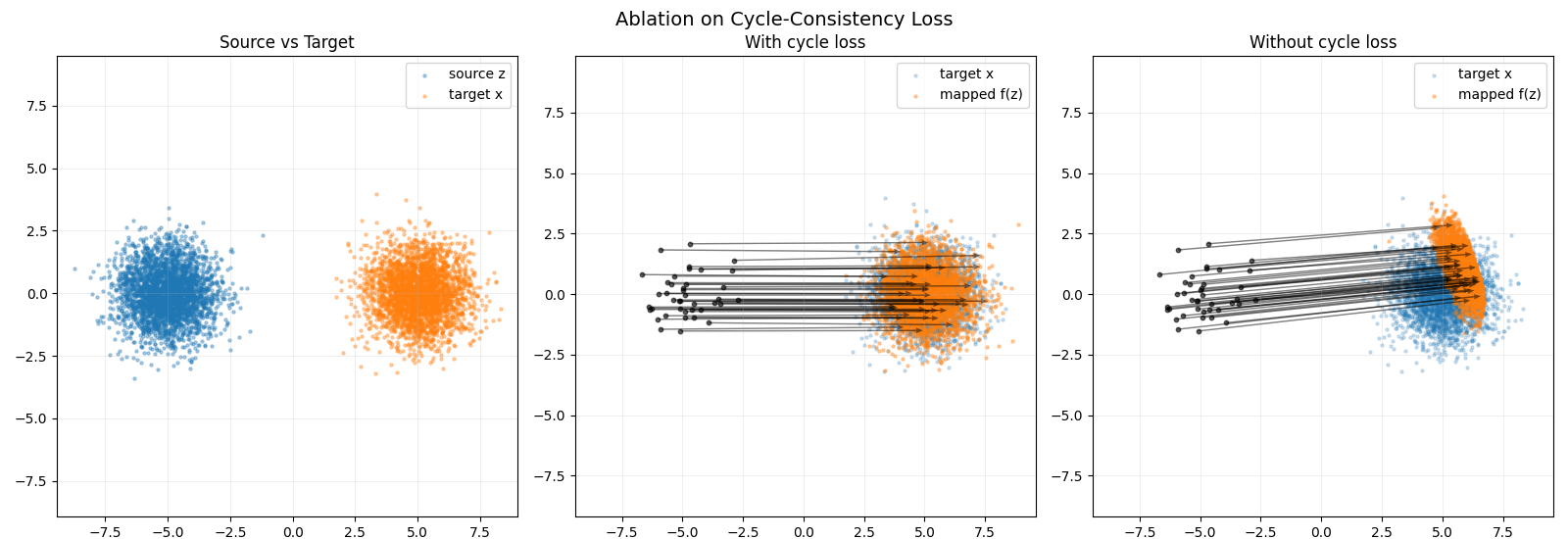}
    \caption{Ablation study on the cycle-consistency loss. Removing the cycle loss leads to mode collapse transport (right) compared to the full model (middle).}
    \label{fig:cycle_ablation}
\end{figure}

\end{document}